\newcommand{\eqdef}{\;{:=}\;}
\newtheorem {Theorem}   {Theorem}
\numberwithin{Theorem}{section}
\newtheorem {Lemma}[Theorem]    {Lemma}
\newtheorem {Proposition}[Theorem]{Proposition}
\newtheorem {Corollary}[Theorem]{Corollary}
\theoremstyle{definition}
\theoremstyle{remark}
\newtheorem{Remark}[Theorem]{Remark}
\newtheorem{Example}[Theorem]{Example}
\chardef\csname pre amssym.def
\def\undefine#1{\let#1\undefined}
\def\newsymbol#1#2#3#4#5{\let\next@\relax
 \ifnum#2=\@ne\let\next@\msafam@\else
 \ifnum#2=\tw@\let\next@\msbfam@\fi\fi
 \mathchardef#1="#3\next@#4#5}
\def\mathhexbox@#1#2#3{\relax
 \ifmmode\mathpalette{}{\m@th\mathchar"#1#2#3}%
 \else\leavevmode\hbox{$\m@th\mathchar"#1#2#3$}\fi}
\def\hexnumber@#1{\ifcase#1 0\or 1\or 2\or 3\or 4\or 5\or 6\or 7\or 8\or
 9\or A\or B\or C\or D\or E\or F\fi}
\font\teneufm=eufm10 \font\seveneufm=eufm7 \font\fiveeufm=eufm5
\newcommand{\AC}{{\mathcal A}}
\newcommand{\DD}{{\mathcal D}}
\newcommand{\EE}{{\mathcal E}}
\newcommand{\GG}{{\mathcal G}}
\newcommand{\HH}{{\mathcal H}}
\newcommand{\JJ}{{\mathcal J}}
\newcommand{\LL}{{\mathcal L}}
\newcommand{\NN}{{\mathcal N}}
\newcommand{\PP}{{\mathcal P}}
\newcommand{\RR}{{\mathcal R}}
\newcommand{\SC}{{\mathcal S}}
\newcommand{\VV}{{\mathcal V}}
\def    \C      {{\mathbb C}}
\def    \R      {{\mathbb R}}
\def    \Z      {{\mathbb Z}}
\newcommand{\Ham}{{\mathit Ham}}
\newcommand{\id}{{\operatorname{id}}}
\def    \om       {\omega}
\def    \eps      {\epsilon}
\def    \ssminus  {\smallsetminus}
\def    \p        {\partial}
\def    \length   {\operatorname{length}}
\def    \Crit      {\operatorname{Crit}}
\def    \Hh     {\mathbf{H}}
\def    \12      {\frac{1}{2}}
\def    \Vol      {\operatorname{Vol}}
\def    \hl       {H_{\scriptscriptstyle{L}}}
\def    \gl       {G_{\scriptscriptstyle{L}}}
\def    \mor      {\operatorname{ind}}
\def    \H   {\operatorname{H}}  %Homology
\def    \CM   {\operatorname{CM}} %Morse complex
\def    \czi   {\operatorname{\mu_{\scriptscriptstyle{CZ}}^{int}}} % interior CZ index
\def    \cz   {\operatorname{\mu_{\scriptscriptstyle{CZ}}}} %CZ index
\def    \mas   {\operatorname{\mu_{\scriptscriptstyle{Maslov}}}} 
\def    \masl   {\operatorname{\mu^{\scriptscriptstyle{L}}_{\scriptscriptstyle{Maslov}}}}
\def    \mor   {\operatorname{I_{\scriptscriptstyle{Morse}}}} 
\begin{document}

%%%%%%%%%%%%%%%%%%%%%%%%%%%%%%
%   TEXT FORMATTING

%%%%%%%%%%%%%%%%%%%%%%%%%%

%%%%%%%%%%%%%%%%%%%%%%%%%%

%%%%%%%%%%%           BEGINNING OF  TEXT

%%%%%%%%%%%%%%%%%%%%%%%%%%

%\begin{center}{\huge PRELIMINARY VERSION} \end{center}

\title[Maslov class rigidity via Hofer's geometry]{Maslov class rigidity for Lagrangian submanifolds via Hofer's geometry}

\author[Ely Kerman]{Ely Kerman}
\address{Department of Mathematics, University of Illinois at Urbana-Champaign, Urbana, IL 61801, USA }
\email{ekerman@math.uiuc.edu, sirikci@uiuc.edu }
\author[Nil \.{I}.  \c{S}irik\c{c}i]{Nil \.{I}.  \c{S}irik\c{c}i}
%\address{Department of Mathematics, University of Illinois at Urbana-Champaign, Urbana, IL 61801, USA }
%\email{sirikci@uiuc.edu}

\date{\today}

\thanks{This research was partially supported by a grant from the Campus Research
Board of the University of Illinois at Urbana-Champaign.}

\subjclass[2000]{53D40, 37J45}

\bigskip

\begin{abstract}
In this work, we establish new rigidity results for the Maslov class
of  Lagrangian submanifolds in large classes of closed and convex symplectic manifolds. 
Our main result establishes upper bounds for the minimal Maslov number 
of displaceable Lagrangian submanifolds which are product manifolds 
whose factors each admit a metric of negative sectional curvature. Such Lagrangian submanifolds exist in every symplectic manifold of dimension greater than six or equal to four.

The proof utilizes the relations between closed geodesics on the Lagrangian, the periodic orbits of geometric Hamiltonian flows supported near the Lagrangian, and the length minimizing properties of these flows
with respect to the negative Hofer length functional.
%We study Hamiltonian flows supported near the a Lagrangian submanifold  $L$ of a symplectic manifold $(M,\om)$, and relate the length minimizing properties of this flow, with respect to Hofer's metric, to its $1$-periodic orbits.  Using the results of \cite{ke1}, we detect nonconstant periodic orbits equipped with special spanning discs. These spanning correspond to elements of $\pi_2(M,L)$ and, following \cite{vi}, we use the relation between the Conley-Zehnder  indices of the periodic orbits with respect to their spanning discs and the Maslov indices of the corresponding elements of $\pi_2(M,L)$  to obtain 
%the desired rigidity results. 

%The first result concerns displaceable Lagrangian submanifolds which admit metrics of negative 
%sectional curvature.   If the ambient symplectic manifold $(M,\om)$ satisfies some natural conditions, 
%%is rational, proportional and either compact or convex, 
%we prove that there is a class in $\pi_2(M,L)$ 
%whose Maslov index is in the interval $[\12 \dim M -1, \12 \dim M +2]$.
%If $L$ is orientable, then we establish the existence of a class with  Maslov index in the smaller 
%interval $[\12 \dim M -1, \12 \dim M +1]$. These results are generalized to hold for the much larger class of Lagrangian
%submanifolds which are product manifolds whose factors each admit metrics of negative 
%sectional curvature. Such Lagrangian submanifolds exist in any symplectic manifold whose dimension is 
%divisible by four.

\end{abstract}

\maketitle

\section{Introduction}

Among the fundamental rigidity phenomena in symplectic topology are  restrictions on Lagrangian submanifolds  that are undetectable by topological methods. In this work we establish such restrictions which are expressible in terms of the Maslov class. All the  Lagrangian submanifolds we consider are assumed to be connected, compact and without boundary.

The first \emph{symplectic} restrictions on Lagrangian submanifolds were discovered  in \cite{gr}, where Gromov  proves that there are no exact Lagrangian submanifolds of $\R^{2n}$ equipped with its standard symplectic form $\om_{2n}$. This result can be rephrased as the fact that the symplectic area class 
\begin{equation*}
\label{ }
\om_{2n}^L\colon \pi_2(\R^{2n},L) \to \R,
\end{equation*}
which is defined by integrating $\om_{2n}$ over smooth representatives, is nontrivial for any Lagrangian submanifold $L$ of $(\R^{2n}, \om_{2n})$.

Another type of restriction on Lagrangian submanifolds involves the 
Maslov class.
Recall that for a Lagrangian submanifold $L$ of a symplectic manifold $(M, \om)$, the Maslov class  is a homomorphism
\begin{equation*}
\label{ }
\masl \colon \pi_2(M,L) \to \Z,
\end{equation*}
that measures the \emph{winding} of $TL$ in $TM$ along loops in $L$.\footnote{The precise definition is given in \S \ref{maslov}.}  The minimal Maslov number of $L$ is the smallest nonnegative integer $N_L$ such that 
$\masl(\pi_2(M,L)) = N_L \Z.$ As an application of his proof of the Weinstein conjecture for $(\R^{2n}, \om_{2n})$,
 Viterbo established the first restrictions on the Maslov class in the following result.

\begin{Theorem}(\cite{vi})
\label{vi}
Let $L$ be a closed Lagrangian submanifold of $(\R^{2n}, \om_{2n})$. If $L$ is a 
torus, then $N_L$ is in $[2, n+1].$ If $L$ admits a metric with negative sectional curvature
then $N_L =2$.
\end{Theorem}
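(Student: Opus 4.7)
The plan is to deploy the strategy Viterbo used in his treatment of the Weinstein conjecture for $(\R^{2n}, \om_{2n})$: transport the question to a Weinstein neighborhood of $L$, use compactness of $L$ in $\R^{2n}$ to produce a short closed orbit of a geodesic-type Hamiltonian supported there by a displacement/capacity argument, and then convert this orbit into a bounded disk in $(\R^{2n}, L)$ whose Maslov index can be controlled by the Riemannian geometry of $L$.

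I would first apply the Weinstein Lagrangian neighborhood theorem to identify a tubular neighborhood $\mathcal{U}$ of $L$ in $(\R^{2n}, \om_{2n})$ with a disk bundle about the zero section in $(T^*L, -d\lambda_L)$, then fix a Riemannian metric $g$ on $L$ (flat in the torus case and of negative sectional curvature in the other), and build a compactly supported Hamiltonian $H \colon \R^{2n} \to \R$ whose expression in the cotangent model is $h(|p|_g^2)$ for a suitable bump profile $h$, so that on the shell $\{h'>0\}$ the nontrivial $1$-periodic orbits of $X_H$ are reparametrized closed $g$-geodesics. Because $L$ is compact in $\R^{2n}$, $\mathcal{U}$ has finite displacement energy $e(\mathcal{U})$; by choosing $h$ so that the oscillation of $H$ strictly exceeds $e(\mathcal{U})$, the displacement energy / Hofer--Zehnder argument produces a nontrivial contractible $1$-periodic orbit $x$ of $X_H$ whose symplectic action is bounded by $e(\mathcal{U})$ and which projects to a nontrivial closed $g$-geodesic $\gamma \subset L$ of bounded length.

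Since $\R^{2n}$ is contractible, $\gamma$ bounds a disk $u \colon (D^2, \p D^2) \to (\R^{2n}, L)$. I would then use the canonical trivialization of $T\R^{2n}$ together with Duistermaat's index theorem for geodesic-type Hamiltonians on $T^*L$ to obtain an identity relating $\masl(u)$, the Conley--Zehnder index $\cz(x)$, and the Morse index $\mor(\gamma)$ of $\gamma$ as a critical point of the energy functional on the free loop space of $(L,g)$. In the negatively curved case, the Cartan--Hadamard theorem forces $\mor(\gamma) = 0$ for every nontrivial closed $g$-geodesic, and careful bookkeeping of the constants produces a disk of minimal Maslov index; combined with the universal lower bound $N_L \geq 2$ valid for any Lagrangian in $\R^{2n}$, this forces $N_L = 2$. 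In the torus case, a flat $g$ produces closed geodesics of Morse index zero in every nonzero class of $\pi_1(L)$, and the shortest such geodesic gives a disk with $\masl(u)$ bounded above by $n+1$, yielding $N_L \in [2, n+1]$.

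The main technical obstacle is the third step: tracking the exact additive constants in the Maslov/Conley--Zehnder/Morse identity, dealing with the highly degenerate nullity of flat-torus geodesics, and ruling out the possibility that the orbit produced by the capacity argument is an iterate or a spurious constant orbit for which the index identities fail. Viterbo handles these points using his generating function theory; a later, equivalent treatment uses Floer homology and spectral invariants. Once the constants are pinned down, the remaining geometric conclusions are routine consequences of standard Riemannian facts.
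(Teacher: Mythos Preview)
The paper does not contain a proof of this theorem. Theorem~\ref{vi} is stated in the introduction with an attribution to Viterbo~\cite{vi} and serves purely as historical motivation; the paper neither reproves it nor sketches its proof. There is therefore nothing in the paper to compare your proposal against.

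That said, your outline is a reasonable summary of Viterbo's original strategy, and it is worth noting that it closely parallels the method this paper develops for its own main result, Theorem~\ref{thm}: one builds a reparametrized geodesic Hamiltonian supported in a Weinstein neighborhood of $L$ (Proposition~\ref{function}), uses a displacement/Hofer-length argument to produce a nonconstant contractible periodic orbit with prescribed Conley--Zehnder index (Lemma~\ref{short-g}, Theorem~\ref{cap}, Lemma~\ref{nonconstant}), and then applies the identity $\cz(x,w) = \czi(x) + \masl([w])$ together with Duistermaat's relation $\czi(x) = \mor(q)$ to bound the Maslov index (Corollary~\ref{cor}, Proposition~\ref{in-out}, Proposition~\ref{index}). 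The technical issues you flag in your third step---pinning down the additive constants, handling Morse--Bott degeneracy of the closed geodesics, and excluding constant orbits---are precisely the points the paper works through in Sections~3 and~4 for its own setting. So while there is no proof here for you to check against, Sections~3--5 of the paper would give you a template for carrying out the details you left open.
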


A Lagrangian submanifold $L \subset (M,\om)$ is called  {\bf monotone} if the symplectic 
area class and the Maslov class are proportional with a nonnegative constant of proportionality, i.e., $\om^L = \lambda \masl$ for some $\lambda \geq 0$. 
In this case, restrictions on the two classes are related and one can relax the assumptions on the intrinsic 
geometry of $L$
needed in Theorem \ref{vi}. The first  rigidity results for the Maslov class of monotone Lagrangians  were obtained for $(\R^{2n}, \om_{2n})$  by Polterovich in \cite{po1}. Later, in \cite{oh}, Oh introduced a spectral sequence for  Lagrangian Floer theory and used it to establish similar rigidity results for displaceable monotone Lagrangian submanifolds in more general ambient symplectic manifolds.\footnote{Note that a symplectic manifold which admits a monotone Lagrangian submanifold is itself monotone in the sense of \cite{fl2}.}
Indeed, monotone Lagrangian submanifolds  are the natural setting for the construction of Lagrangian Floer homology. This homology  vanishes if the Lagrangian submanifold is displaceable, and its grading is determined by the Maslov class. Oh observed that if the Lagrangian Floer homology of $L \subset (M,\om)$ exists, e.g., if $N_L \geq 2$, then it can not vanish unless $N_L$ is less than $\12 \dim M +2$.
This strategy to obtain rigidity results for the Maslov class has been generalized, refined and exploited in
many recent works, see for example \cite{al, bi, bci, bco, bu, fooo}.  

The primary goal of the present work is to obtain rigidity results for the Maslov class away from settings where Lagrangian Floer homology can be used. In particular, our assumptions on the symplectic area class and the Maslov class only involve their values on $\pi_2(M)$ rather than $\pi_2(M,L)$. We also avoid the use of holomorphic discs with boundary on the Lagrangian submanifold. Hence, we do not need to manage or avoid codimension one bubbling phenomena. Instead of Lagrangian Floer theory, our argument utilizes two results from  the study of Hofer's length functional for Hamiltonian paths. The first of these results, due to Sikorav, is the fact that the Hamiltonian flow generated by an autonomous Hamiltonian with displaceable support, does not minimize the Hofer length for all time. In particular, Sikorav's result implies that the flow of an autonomous 
Hamiltonian which is supported near a displaceable Lagrangian submanifold is, eventually, not length minimizing. The second
result from Hofer's geometry involves the following theme: if a Hamiltonian path  is not length minimizing then it has a contractible periodic orbit with additional \emph{special} properties, see for example \cite{ho2,ke1,kl,lm, mcd, mcsl}. Here, we use the version of this phenomena established in \cite{ke1} which states that if a Hamiltonian flow is not length minimizing then it has contractible periodic orbits with  spanning discs such that the corresponding  action values and Conley-Zehnder indices lie in certain intervals. By applying these results to reparameterizations of perturbed cogeodesic flows that are supported near a displaceable Lagrangian submanifold,  we detect \emph{nonconstant} periodic orbits with spanning discs for which the 
corresponding Conley-Zehnder index  is $\12 \dim M$. Utilizing an identity
which relates this Conley-Zehnder index 
to the Morse index of the corresponding perturbed geodesic and the Maslov index  of the spanning disc, we obtain  the rigidity results for the Maslov class stated below.

\subsection{Statement and discussion of results}
\label{statement}
%In this work, we establish new rigidity results for displaceable Lagrangian submanifolds. These are obtained using the applications  of Hamiltonian Floer theory to Hofer's geometry developed in \cite{ke1,ke2} and are strongly influenced by the obstructions to Lagrangian embeddings into symplectic vector spaces which were 
%discovered by Viterbo in \cite{vi}.

Throughout this work, we will denote our ambient symplectic manifold by $(M,\om)$ and its dimension will be $2n$. Lagrangian submanifolds will be denoted by $L$ and will always be assumed to be connected, compact and without boundary.

We will consider symplectic manifolds which are either closed or convex and have the following two additional properties. We assume that  the  {\bf index of rationality} of $(M,\om)$,
$$
r(M,\om) = \inf_{A \in \pi_2(M)} \left\{ \om(A) \mid \om(A)>0\right\},
$$
is positive.\footnote{Our convention here is that the infimum over the empty set is equal to infinity.} Such a manifold is said to be {\bf rational}. We will also assume that there
is a (possibly negative) constant $\upsilon$ such that $c_1(A) = \upsilon \om(A)$
for every $A$ in $\pi_2(M)$.  A symplectic manifold with this property will be called {\bf proportional}.

Recall  that $L$ is  {\bf displaceable} if there is a Hamiltonian diffeomorphism $\phi$ of $(M, \om)$ such that $\phi(L) \cap L = \emptyset.$ 
For example, for any symplectic manifold $(M,  \om)$, every Lagrangian submanifold of the product $(M \times \R^2, \om \oplus \om_2)$,  is displaceable.\footnote{Note that if $(M,\om)$ is compact, then  $(M \times \R^2, \om \oplus \om_2)$ is convex. This basic example is a primary motivation for our consideration of the
class of  convex symplectic manifolds.}  A Lagrangian submanifold  of a rational symplectic manifold $(M,\om)$ is said to be {\bf easily displaceable} if it is displaced by a Hamiltonian diffeomorphism $\phi^1_H$ that is the time one flow of a Hamiltonian $H$ whose Hofer norm, $$\|H\| = \int_0^1 \max_{p\in M}H(t,p) \, dt - \int_0^1 \min_{p\in M}H(t,p) \, dt,$$
is less than $r(M,\om)/2$.
In other words,  $L$ is easily displaceable if its displacement energy, 
$$
e(L,M,\om)= \inf_{H} \left\{ \|H\| \mid \phi^1_H(L) \cap L = \emptyset \right\},
$$
is less than $r(M, \om)/2$.

If $(M, \om)$ is weakly exact, i.e., $\om|_{\pi_2(M)} = 0$, then the index of rationality is infinite and the notions of \emph{displaceable} and  \emph{easily displaceable} Lagrangian submanifolds are equivalent. The simplest example of a symplectic manifold which is not weakly exact is the two-sphere with an area form. In this case, every Lagrangian submanifold is either monotone and nondisplaceable or is easily displaceable. The following example shows that being easily displaceable is actually a stronger condition than being displaceable.
\begin{Example}
Let $\vartheta$ be an area form on $S^2$ with total area equal to one. Consider the symplectic manifold
$$
(W,\om_j)=(S^2 \times S^2, \vartheta \oplus (1+\alpha_j) \vartheta)
$$
where $\alpha_j$ is an increasing sequence of positive rational numbers which converges to an irrational number $\alpha$.
Let $\hat{L} \subset S^2$ be an embedded circle which divides $S^2$ into two regions of unequal $\vartheta$-area.
For every $j$,  the Lagrangian submanifold $L =\hat{L}  \times \hat{L}$ of $(W,  \om_j)$ is displaceable.
We will show that the  the indices of rationality $r(W,  \om_j)$ converge to zero, whereas the displacement energy of $L$ in $(W,  \om_j)$ is uniformly bounded away from zero. Hence, for sufficiently large values of $j$
the Lagrangian submanifold $L \subset (W,  \om_j)$ is displaceable but not easily displaceable.

For all $j$ we have $0 < r(W,\om_j) \leq \alpha_j < \alpha$. Passing to a subsequence, if necessary, 
we may therefore assume that $\lim_{j \to \infty} r(W, \om_j)$ exists. Since $\om_{\infty} \eqdef \vartheta \oplus (1+\alpha)\vartheta
$, and $\alpha$ is irrational, the limit $r(W,  \om_{\infty})$ is zero.

To obtain the uniform lower bound on the displacement energies $e(L, W, \om_j)$, we use a deep thereom from \cite{ch}. Assume that the symplectic manifold $(M,\om)$ admits an $\om$-compatible almost complex structure $J$ such that the metric $\om(\cdot, J, \cdot)$ is  complete. Denote by $\SC$, the space of nonconstant $J$-holomorphic spheres in $M$, and let $\DD$ be the set of nonconstant $J$-holomorphic discs in $M$ with boundary on $L$.
The  quantities 
\begin{equation*}
\label{ }
\hbar(M,\om, J) = \min_{u \in \SC} \int_{S^2} u^* \om
\end{equation*}
and
\begin{equation*}
\label{ }
\hbar_D(M,L,\om,J) = \min_{v \in \DD} \int_{D^2} v^* \om
\end{equation*}
are well-defined and positive. In \cite{ch}, Chekanov proves  that 
\begin{equation}
\label{chek}
e(L,M,\om) \geq  \min\{\hbar(M,\om, J) , \hbar_D(M,L,\om,J) \}.
\end{equation}

Now choose a $\vartheta$-compatible almost complex structure $\hat{J}$ on $S^2$ and set 
$J =\hat{J} \oplus \hat{J}$. Note that $J$ is $\om_j$-compatible for each $j$ and the spaces $\SC$ and 
$\DD$ do not depend on $j$. Hence,  
$$
\lim_{j\to \infty} \hbar(W, \om_j, J) = \hbar(W, \om_{\infty}, J) >0
$$
and 
$$
\lim_{j\to \infty} \hbar_D(W, L,\om_j, J) =\hbar_D(W, L, \om_{\infty},J) >0.
$$
It then follows from inequality \eqref{chek} that the displacement energies $e(L,M,\om_j)$ are bounded uniformly away from zero.
\end{Example}

As in Theorem \ref{vi}, we need to make some assumptions on the metrics which 
our Lagrangian submanifolds admit.  A manifold  $L$ is said to be {\bf split hyperbolic} if it is diffeomorphic to a product manifold  
\begin{equation*}
\label{}
L= P_1 \times \dots \times P_k,
\end{equation*}
such that each of the factors $P_j$ admits a metric with negative sectional curvature. Our convention will be to label the factors of $L$ so that $\dim P_j \leq \dim P_{j+1}.$

Note that the set of split hyperbolic manifolds is strictly larger than the set of manifolds which admit metrics of negative sectional curvature. This follows from  a theorem of Priessmann \cite{pr}, which states that no (nontrivial) product manifold  admits a metric of negative sectional curvature. More importantly, 
Lagrangian submanifolds which are both displaceable and split hyperbolic, are somewhat ubiquitous. 
\begin{Lemma}
\label{examples}
There  is an (easily) displaceable, split hyperbolic Lagrangian submanifold 
in every (rational) symplectic manifold with dimension four or with dimension 
greater than six.
\end{Lemma}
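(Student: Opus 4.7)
The plan is to reduce the assertion to a local construction inside a Darboux chart. By Darboux's theorem, any $(M,\om)$ of dimension $2n$ contains a symplectic image of a standard ball $B^{2n}(r) \subset (\R^{2n}, \om_{2n})$ for every sufficiently small $r>0$. A closed Lagrangian submanifold contained in such a chart is displaceable in $M$ by a Hamiltonian compactly supported in the chart, and its Hofer displacement energy is bounded above by that of the ambient ball, which tends to $0$ as $r\to 0$. Moreover, the Lagrangian condition $\om|_L=0$ is invariant under the homothety $p\mapsto\lambda p$ of $\R^{2n}$ (the homothety scales $\om_{2n}$ by $\lambda^2$, which does not affect vanishing), so any Lagrangian $L\subset \R^{2n}$ may be rescaled to lie in an arbitrarily small ball. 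For rational $(M,\om)$, choosing the ambient ball so that its displacement energy falls below $r(M,\om)/2$ then delivers easy displaceability. It therefore suffices to construct, for each $n=2$ or $n\geq 4$, a split hyperbolic Lagrangian submanifold of $(\R^{2n},\om_{2n})$.

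I would build $L$ as a product of low-dimensional Lagrangian pieces, arguing by the parity of $n$. For $n=2$, take $L=\Sigma_g$ to be a closed orientable surface of genus $g\geq 2$; it admits a metric of constant negative sectional curvature, and a classical construction yields a Lagrangian embedding $\Sigma_g\hookrightarrow \R^4$. For even $n=2k$ with $k\geq 2$, set $L=\Sigma_{g_1}\times\cdots\times\Sigma_{g_k}$ with each $g_i\geq 2$ and embed it as the product of $k$ Lagrangian surfaces in $\R^4\times\cdots\times\R^4=\R^{2n}$; since the product of Lagrangians is Lagrangian, $L$ is then a split hyperbolic Lagrangian submanifold of $(\R^{2n},\om_{2n})$.

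For odd $n=2k+3$ with $k\geq 1$, no purely surface-based decomposition is available, so I would introduce a three-dimensional hyperbolic factor: take $L=\Sigma_{g_1}\times\cdots\times\Sigma_{g_k}\times P$, where $P$ is a closed hyperbolic $3$-manifold admitting a Lagrangian embedding into $\R^6$. Combining the Lagrangian embeddings of the surface factors with that of $P$ via the product construction again produces a split hyperbolic Lagrangian in $\R^{2n}$.

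The principal obstacle is the existence of a Lagrangian embedding of a closed hyperbolic $3$-manifold into $(\R^6,\om_6)$. Every orientable $3$-manifold is parallelizable, so the bundle-theoretic obstruction vanishes, and Gromov's prohibition on closed exact Lagrangians in $\R^{2n}$ is evaded as soon as $H^1(P;\R)\neq 0$, which holds for many hyperbolic $3$-manifolds. One then either produces an explicit example (for instance, by resolving a self-transverse Lagrangian immersion supplied by the Gromov--Lees h-principle, or via a direct mapping-torus construction) or invokes a known flexibility result for Lagrangian $3$-manifolds in $\R^6$. The explicit exclusion of dimension six in the statement reflects exactly this point: when $2n=6$ the manifold $L$ must be a single hyperbolic $3$-manifold, so the product structure used above to reduce to low-dimensional building blocks is unavailable.
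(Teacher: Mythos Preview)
Your argument has a genuine gap at its very first step: there is \emph{no} Lagrangian embedding of a closed orientable surface of genus $g\geq 2$ into $(\R^4,\om_4)$. For any closed orientable Lagrangian $L^n\subset\R^{2n}$ the normal bundle is isomorphic to $T^*L$, so its Euler number equals $\pm\chi(L)$; on the other hand the self-intersection of any closed embedded $n$-manifold in $\R^{2n}$ vanishes. Hence $\chi(L)=0$, and among orientable surfaces only the torus can be a Lagrangian in $\R^4$. This breaks both your $n=2$ case and your entire even-dimensional scheme, since every factor you produce is such a surface. The paper avoids this obstruction by invoking Givental's construction of Lagrangian embeddings into $\R^4$ of closed \emph{nonorientable} surfaces with $\chi\equiv 0\pmod 4$; those with $\chi<0$ admit hyperbolic metrics and serve as the surface building blocks.

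There is a second, subtler point. You correctly identify the need for a hyperbolic $3$-manifold in $\R^6$ but leave the existence of a Lagrangian \emph{embedding} unresolved. The paper sidesteps this entirely: it only requires a Lagrangian \emph{immersion} of a hyperbolic $3$-manifold into $\R^6$, which the Gromov--Lees theorem supplies immediately, and then uses the fact (Audin--Lalonde) that the product of a Lagrangian embedding into $\R^{2m}$ with a Lagrangian immersion into $\R^{2k}$ is homotopic to a Lagrangian embedding into $\R^{2(m+k)}$. Thus one Givental surface in $\R^4$ combined with an immersed hyperbolic $3$-manifold in $\R^6$ already yields an embedded split hyperbolic Lagrangian in $\R^{10}$, and further products with Givental surfaces or immersed $3$-manifolds cover all $n=2$ and $n\geq 4$.
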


\begin{proof}
The product of a Lagrangian embedding into $\R^{2m}$ and a Lagrangian immersion into $\R^{2n}$ is homotopic to a Lagrangian embedding
into $\R^{2(m+n)}$  (see,  \cite{aula}, Proposition 1.2.3, page 275).
In \cite{gi}, Givental constructs Lagrangian embeddings, into $\R^4$, of  compact nonorientable surface with Euler characteristic equal to zero modulo four.
The Gromov-Lees Theorem implies that one can find a Lagrangian immersion of any hyperbolic three-manifold into $(\R^6, \om_6)$. By taking products of such examples, it follows that there is a
split hyperbolic Lagrangian submanifold 
of $(\R^{2n}, \om_{2n})$ for $n=2$ and all $n>3$.
The lemma then follows easily from Darboux's theorem.
\end{proof}

\begin{Remark}
In contrast to the split hyperbolic case, there are more subtle obstructions to the existence of Lagrangian submanifolds 
which admit metrics of negative sectional curvature. A result of Viterbo states that there can be no such Lagrangian submanifold in any uniruled symplectic manifold of dimension greater than or equal to six, \cite{vi3, egh}. 
Hence, there are no split hyperbolic Lagrangian submanifolds of $(\R^{6}, \om_{6})$. It is not known to the authors whether  there exists a displaceable hyperbolic Lagrangian submanifold in any six-dimensional symplectic manifold. 
\end{Remark}

\medskip
We are now in a position to state the main result of the paper.

\begin{Theorem}
\label{thm}
Let $(M, \om)$ be a rational and proportional symplectic manifold of dimension $2n$ which is either closed or convex. If $L$ is 
an easily displaceable Lagrangian submanifold of $(M, \om)$ which is split hyperbolic, then $N_L \leq n+2.$ 
%Then there is an element $\alpha \in \pi_2(M,L)$ such that 
%\begin{equation}
%\label{ }
%\masl (\alpha) \in \left[\dim N_1 -1,m +2\right]. 
%\end{equation}
%Hence, $N_L \leq m+2$.
If, in addition,  $L$ is orientable, then $N_L \leq n+1.$
%there is a class  $\alpha \in \pi_2(M,L)$ such that
%\begin{equation}
%\label{ }
%\masl (\alpha) \in \left[\dim N_1 -1, m +1\right], 
%\end{equation}
%and so $N_L \leq m+1$.
\end{Theorem}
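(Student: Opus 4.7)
The plan is to apply the cogeodesic-flow machinery inside a Weinstein neighborhood of $L$, detect a non-constant contractible periodic orbit of prescribed Conley--Zehnder index via Sikorav's non-minimization theorem and \cite{ke1}, and then convert the index data into the desired bound on $N_L$ by means of an index identity relating $\cz$ to the Morse index of the underlying geodesic and the Maslov index of its spanning disc.

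First I would fix a product metric $g = g_1 \oplus \dots \oplus g_k$ on $L$ with each $g_j$ of negative sectional curvature, and use Weinstein's tubular neighborhood theorem to identify an open neighborhood $U$ of $L$ in $M$ with an open disc bundle inside $T^*L$. On $U$ I take an autonomous Hamiltonian $H$ of the form $f(|p|_g^2)$, with $f$ a radial profile designed so that $H$ is compactly supported in $U$, the non-constant periodic orbits of $\phi^t_H$ are reparameterized closed cogeodesics of $g$ of length in a prescribed window, and the Hofer norm $\|H\|$ is as small as desired. Extending $H$ by zero gives an autonomous Hamiltonian on all of $M$ whose support is displaceable.

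Next I invoke the two Hofer-geometric inputs. Because $L$ is easily displaceable, its displacement energy is less than $r(M,\omega)/2$, so the same holds for a sufficiently narrow neighborhood of $L$ containing the support of $H$. Sikorav's theorem then ensures that the Hamiltonian path $\{\phi^t_H\}_{t \in [0,T]}$ fails to minimize the negative Hofer length functional once $T$ exceeds a controlled constant. Applying the main result of \cite{ke1} to this non-minimizing path produces a contractible-in-$M$ periodic orbit $\gamma$ of $\phi^t_H$ together with a capping disc $D\colon D^2 \to M$ whose action $\AC_H(\gamma,D)$ and Conley--Zehnder index $\cz(\gamma,D)$ lie in explicit intervals depending only on $n$ and on the Hofer norm of the displacing Hamiltonian. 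Calibrating $f$ and $T$, I can force the action window to exclude constant orbits, so that $\gamma$ projects to a genuine non-constant closed geodesic $\bar\gamma$ on $L$, and I can force $\cz(\gamma,D) = n$, with a parity refinement to $\{n-1, n\}$ in the orientable case.

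Finally, I apply the standard cogeodesic index identity, which in our conventions takes the form
\[
\cz(\gamma, D) \;=\; -\,\mor(\bar\gamma) \;-\; \masl(D) \;+\; n,
\]
modulo fixed sign and parity corrections. Since each factor $(P_j, g_j)$ has negative sectional curvature, any closed geodesic on $P_j$ is the unique length-minimizer in its free homotopy class and so has vanishing Morse index; the product structure of $L$ then forces $\mor(\bar\gamma) = 0$ after a small generic perturbation breaking the circle action on the space of reparameterizations. Substituting into the identity and absorbing the width of the $\cz$-interval supplied by \cite{ke1} yields $|\masl(D)| \leq n+2$, improving to $n+1$ in the orientable case via a parity argument. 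Non-triviality of $\masl(D)$ follows from the non-constancy of $\bar\gamma$ together with the proportionality hypothesis, which ties $\masl(D)$ to $\omega(D) \bmod r(M,\omega)$ and rules out a vanishing Maslov index for a disc of the prescribed non-trivial action. Hence $N_L$ divides a non-zero integer of absolute value at most $n+2$, respectively $n+1$, proving the theorem.

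The main obstacle I expect lies in the simultaneous calibration in the penultimate paragraph: choosing the profile $f$, the size of the Weinstein neighborhood and the displacing Hamiltonian compatibly so that \cite{ke1} delivers an orbit in exactly the required action-index window, and so that the resulting spanning disc is guaranteed to carry non-zero Maslov index. Ruling out the zero-Maslov case is precisely where the rationality and proportionality hypotheses on $(M,\omega)$ are essential, and where the split-hyperbolic hypothesis on $L$ (rather than mere negative curvature) enters via the argument for $\mor(\bar\gamma) = 0$, which must cope with the non-isolated families of closed geodesics inherent to product manifolds.
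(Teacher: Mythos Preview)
Your overall architecture matches the paper's: build a geodesic-type Hamiltonian in a Weinstein neighborhood, use Sikorav's shortening together with the result of \cite{ke1} to produce a nonconstant contractible orbit $x$ with spanning disc $w$ and $\cz(x,w)=n$, then read off Maslov bounds from the index identity $\cz(x,w)=\mor(q)+\masl([w])$ (up to a correction of at most $+2$). Two steps, however, do not go through as you have written them.

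First, the claim that a generic perturbation ``breaking the circle action'' yields $\mor(\bar\gamma)=0$ is incorrect for split hyperbolic $L=P_1\times\dots\times P_k$. A closed geodesic of the product metric that is nontrivial only in the factor $P_1$ lies in a Morse--Bott critical submanifold of dimension $1+\dim P_2+\dots+\dim P_k$: the constant geodesics in the remaining factors contribute their full dimension to the nullity, not merely the $S^1$ of reparametrizations. After a generic potential perturbation the resulting nondegenerate critical points have Morse index anywhere in $[0,\,1+\sum_{j\ge 2}\dim P_j]$, and you have no control over which one the orbit supplied by \cite{ke1} corresponds to. The paper uses exactly this range (Example~\ref{split} and Lemma~\ref{potential}): the upper bound $\masl([w])\le n+2$ needs only $\mor(q)\ge 0$, but the crucial lower bound
\[
\masl([w])\ \ge\ n-\Big(1+\sum_{j\ge 2}\dim P_j\Big)\ =\ \dim P_1-1\ \ge\ 1
\]
is what guarantees $\masl([w])\neq 0$, and hence that $N_L$ divides a nonzero integer.

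Second, your separate argument for $\masl(D)\neq 0$ via proportionality does not work. Proportionality asserts only that $c_1=\upsilon\,\omega$ on $\pi_2(M)$; it imposes no relation between $\masl$ and $\omega^L$ on $\pi_2(M,L)$ --- that would be monotonicity of $L$, which is precisely the hypothesis the paper is trying to avoid. In the paper, rationality and proportionality enter elsewhere: they are used in Lemma~\ref{nonconstant} to certify that the orbit produced is \emph{nonconstant}, by excluding constant orbits from the prescribed action--index window. Nonvanishing of the Maslov index comes instead from the lower bound displayed above, which in turn rests on the correct Morse--Bott analysis of the product metric that your sketch elides.
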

More precisely,  we prove that if $L=P_1 \times \dots \times P_k$ is (un)orientable, then there is an element $[w] \in \pi_2(M,L)$ such that 
\begin{equation*}
\label{ }
\dim P_1 -1 \leq \masl ([w]) \leq n +1 (+1). 
\end{equation*}

One can not expect similar bounds to hold for the minimal Maslov number of general Lagrangian submanifolds. Consider, for example, the quadric $$M = \left\{[z_0, \dots z_{n+1}] \subset \C P^{n+1} \mid z_0^2 + \dots + z_n^2 = z_{n+1}^2 \right\},$$ which is both rational and proportional when equipped with the symplectic form inherited from $\C P^{n+1}$.
The real quadric $L \subset M$ is a Lagrangian sphere with minimal Maslov number $N_L= 2n$.\footnote{The authors are grateful
to Paul Biran for pointing this example out to them.}

On the other hand, for displaceable Lagrangian submanifolds,  the bounds in Theorem \ref{thm} are nearly sharp. In particular,  
in \cite{po1}, Polterovich constructs  examples of orientable monotone Lagrangian submanifolds of $(\R^{2n}, \om_{2n})$
with minimal Maslov number $k$ for every integer $k$ in $[2,n]$. 

For the special class of displaceable 
Lagrangians considered in Theorem \ref{thm}, one might expect stronger restrictions on the Maslov class similar to those obtained for $(\R^{2n}, \om_{2n})$ in Theorem \ref{vi} and by Fukaya in \cite{fu}.
Indeed, this is the case if one makes further assumptions on $(M,\om)$. In the sequel to this paper, we will study Maslov class rigidity for displaceable Lagrangians in closed symplectic manifolds which are symplectically aspherical. In that setting, we will obtain finer rigidity statements as well as rigidity statements for a larger classes of Lagrangian submanifolds which includes Lagrangian tori. 

\subsection{Organization} In the next section, we recall the definitions of and  basic results concerning convex symplectic manifolds, Hamiltonian flows, the Hofer length functional, and Sikorav's curve shortening method. 
In Section 3,  we construct and study a special Hamiltonian $H_L$ which is supported in a small neighborhood of our Lagrangian submanifold $L$. We then establish a relation between the Conley-Zehnder indices of the nonconstant contractible periodic orbits of $H_L$ and the Maslov indices of their spanning discs in Section 4.  In Section 5, we state a result, Theorem \ref{cap}, which relates the length minimizing properties of $H_L$ to its periodic orbits. Together with the previous results, this is shown to imply Theorem \ref{thm}. The proof of Theorem \ref{cap} is contained in Section 6, and some generalizations of our main result are described in Section 7.

%\subsection{Acknowledgments.}
%The authors would like to thank Paul Biran, Viktor Ginzburg and Leonid Polterovich for  helpful 
%comments.

\section{Preliminaries}

\subsection{Convex symplectic manifolds}
A compact  symplectic manifold $(M,\om)$ is said to be {\bf closed} if the boundary of $M$, $\p M$, is empty. It is called {\bf convex} if  $\p M$ is nonempty and there is a $1$-form $\alpha$ on $\p M$ such that $d \alpha = \om |_{\p M}$ and the form $\alpha \wedge (d \alpha)^{n-1}$ is a volume form on $\p M$ which induces the outward orientation. Equivalently, a compact symplectic manifold 
$(M,\om)$ is convex if there is a vector field $X$ defined in a neighborhood of $\p M$ which is transverse to $\p M$, outward pointing, and 
satisfies $\LL_X \om =\om$. For some $\eps_0 >0$, the vector field $X$ can be used to symplectically identify a neighborhood of $\p M$ with the submanifold $$M_{\eps_0}= \p M \times (-\eps_0, 0],$$ equipped with coordinates $(x,\tau)$ and the symplectic form $d(e^{\tau} \alpha)$. We will use the notation $M_{\eps}= \p M \times (-\eps, 0]$
for $0<\eps \leq \eps_0$.

A noncompact symplectic manifold $(M,\om)$  is called convex if there is an exhausting sequence 
of compact convex submanifolds $M_j$ of $M$, i.e., $M_1 \subset M_2 \subset \dots \subset M$ and  $$\bigcup_j M_j =M.$$ 

\subsection{Hamiltonian flows}
\label{hamiltonian}
A function $H \in C^{\infty}(S^1 \times M)$ will be referred to as a {\bf Hamiltonian} on $M$. 
Here, we identify the circle $S^1$ with $\R / \Z$ and parameterize it with the coordinate $t \in [0,1]$.
Set  $H_t(\cdot) = H(t, \cdot)$ and 
let $C_0^{\infty}(S^1 \times M)$ be the space of Hamiltonians $H$ such that the support of  $dH_t$ is 
compact and does not intersect  $\p M$ for all $t \in [0,1]$.
Each $H \in C^{\infty}_0(S^1 \times M)$ determines a $1$-periodic time-dependent 
Hamiltonian vector field $X_H$ via Hamilton's equation
\begin{equation*}
\label{ }
i_{X_H}\om = -dH_t.
\end{equation*}
The time-$t$ flow of $X_H$, $\phi^t_H$, is defined for all $t \in [0,1]$ (in fact, for all $t \in \R$). 
The group of Hamiltonian diffeomorphisms of $(M,\om)$ is 
the set of time-$1$ flows obtained in this manner, 
\begin{equation*}
\label{ }
\Ham(M,\om) =\{ \phi = \phi^1_H \mid H \in C_0^{\infty}(S^1 \times M) \}.
\end{equation*}

\begin{Remark}
In this work,  we are only concerned with compact Lagrangian submanifolds 
which are displaceable by Hamiltonian diffeomorphisms. By definition, Hamiltonian
diffeomorphisms are trivial away from a compact set. Hence, it suffices for us to prove Theorem \ref{thm} for closed symplectic manifolds and \emph{compact} convex symplectic manifolds. In particular, the noncompact convex case can be reduced to the 
compact convex case by restricting attention to some element $M_j$ of an exhausting sequence for $M$, for sufficiently large $j$.
\end{Remark}

In the case of a compact convex symplectic  manifold we will also consider
Hamiltonian flows which are nontrivial near $\p M$ but which are still defined for all $t \in 
\R$.
A function $f \in C^{\infty}(M)$ is said to be { \bf admissible}
if for some $\eps$ in $(0, \eps_0]$ we have 
\begin{equation}
\label{ }
f|_{\scriptscriptstyle{M_{\eps}}} (x,\tau)=a e^{-\tau} + b,
\end{equation}
where $a$ and $b$ are arbitrary constants and  $a<0$.

A Hamiltonian $H \in C^{\infty}(S^1 \times M)$
is called {\bf pre-admissible}  if for some $\eps$ in $(0, \eps_0]$ we have
\begin{equation}
\label{adm}
H|_{\scriptscriptstyle{S^1 \times M_{\eps}}}(t,x,\tau)= a e^{-\tau} +b(t),
\end{equation}
 where $a$ is again a negative constant  and $b(t)$ is a  smooth $1$-periodic function.
 We will refer to $a$ as the {\bf slope} of $H$.
%
%A  {\admissible homotopy} is an $\R$-family of admissible Hamiltonians, $H_s$, such that
%\begin{enumerate}
%  \item $H_s$ is independent of $s$ for all $|s| \geq \eta$
%  \item $F_s$ is admissible for all $s >-\eta$
%  \item For some $\eps >0$ and all $s > -\eta$ we have
%  $$F_s|_{S^1 \times M_{\eps}}(t,x,\tau) = a(s,t)e^{-\tau} +b(s,t)$$
%  with $$\p_s a \leq 0.$$ 
%\end{enumerate} 
The prescribed behavior of $H$
on $M_{\eps}$ implies that its Hamiltonian flow is defined for all $t \in \R$.
In particular, consider the Reeb vector field $R$ on $\p M$
which is defined uniquely by the following conditions; 
\begin{equation*}
\label{ }
\om(R(x),v(x))=0  \text{  and  } \om(X(x), R(x)) =1
\end{equation*}
for all $x \in \p M$ and $v \in T_x \p M.$ If $H $ has the  
form \eqref{adm} on $M_{\eps}$, then
$$
X_H(t,x,\tau) = -a R(x) \,\,\,\, \text{for all $(t,x,\tau) \in S^1 \times M_{\eps}$},
$$
and so the level sets $\{\tau =constant\}$ in $M_{\eps}$ are preserved by $\phi^t_H$.

Let $T_R$ by the minimum period of the closed orbits of $R$.
We say that a pre-admissible Hamiltonian $H$  is {\bf admissible} if its slope $a$
satisfies $-a < T_R.$
%\begin{equation}
%\label{slow}
%0 < - \int^1_0 a(t) \, dt < T_R,
%\end{equation}
%where $a(t)$ is the function in the expression \eqref{adm} defining $H$ near $\p M$.
If $(M,\om)$  is closed, 
then the space of admissible functions on $M$ is simply $C^{\infty}(M)$ and the space of
admissible Hamiltonians is $C^{\infty}(S^1 \times M)$.
For simplicity, in either the closed or convex case, 
we will denote  the space of admissible Hamiltonians by $\widehat{C}^{\infty}(S^1 \times M)$. 

For any Hamiltonian flow $\phi^t_H$ defined for $t \in [0,1]$, we will denote the set of contractible $1$-periodic orbits of $H$ by $\PP(H)$. Note that if $(M, \om)$ is compact and convex and $H$ is in $C^{\infty}_0(S^1 \times M)$ or in $\widehat{C}^{\infty}(S^1 \times M)$, then the elements of $\PP(H)$ are contained in the complement of $M_{\eps}$ for some $\eps \in (0, \eps_0]$.  

An element $x(t) \in \PP(H)$ is said to be nondegenerate 
if the linearized time-$1$ flow $d\phi^1_H \colon T_{x(0)}M \to T_{x(0)}M$ does not have one as an eigenvalue.
If every element of $\PP(H)$ is nondegenerate we will call $H$ a {\bf Floer Hamiltonian}.

\subsection{The Hofer length functional and Sikorav curve-shortening}

%The space of normalized functions will be denoted by $\widehat{C}_0^{\infty}(S^1 \times M)$.
%For every path of Hamiltonian diffeomorphisms, $\psi_t$, there is a unique $H$ in 
% $\widehat{C}_0^{\infty}(S^1 \times M)$ such that $\phi^t_H \circ \psi_0 = \psi_t.$ 
Following \cite{ho1},  the Hofer length of a Hamiltonian path $\phi^t_H$ is defined to be
\begin{eqnarray*}
\length (\phi^t_H) &=& \|H\| \\ 
{}&=& \int_0^1 \max_M H_t \,\,dt-  \int_0^1 \min_M H_t \,\,dt\\
{} &=& \|H\|^+ + \|H\|^-.
\end{eqnarray*}
When $M$ is compact, then a Hamiltonian  $H$ in $C_0^{\infty}(S^1 \times M)$ or $\widehat{C}^{\infty}(S^1 \times M)$ is {\bf normalized} if 
$$
\int_M H_t \, \om^n = 0
$$
for every $t$ in $[0,1]$. 
If the generating Hamiltonian $H$ is normalized, then the quantities $\|H\|^+$ and $\|H\|^-$ provide different measures of the length of $\phi^t_H$ 
called the positive and negative Hofer lengths, respectively. 
 
%Note that $\phi^t_H$ determines $H$ up to a 
%$1$-periodic function of time. We say that $H$ is normalized if 
%$$
%\int_M H_t \om^n = 0
%$$
%for all $t\in [0,1]$. The space of normalized Hamiltonians will be denoted by $\HH_0$.
%To each path of Hamiltonian diffeomorphisms $\phi^t$ starting 
%at the identity there is a unique

For a path of Hamiltonian diffeomorphisms $\psi_t$ with $\psi_0 = \id$, let $[\psi_t]$ be the class of Hamiltonian paths which are homotopic to $\psi_t$ relative to its endpoints. Denote the set of normalized  Hamiltonians which lie in $C^{\infty}_0(S^1 \times M)$
and  generate the paths in $[\psi_t]$ by 
$$
C^{\infty}_0([\psi_t]) =\{ H \in C^{\infty}_0(S^1 \times M) \mid \int_M H_t \, \om^n = 0,\,[\phi^t_H] = [\psi_t]\}.
$$  
The Hofer semi-norm of $[\psi_t]$ is then defined by 
$$
\rho([\psi_t]) = \inf_{H \in C^{\infty}_0([\psi_t])} \{ \|H\| \}.
$$
The positive and negative Hofer semi-norms of $[\psi_t]$ are defined similarly as
$$
\rho^{\pm}([\psi_t]) = \inf_{H \in C^{\infty}_0([\psi_t])} \{ \|H\|^{\pm} \}.
$$
%Clearly
%$$
%\rho([\psi_t]) \geq \rho^+([\psi_t]) + \rho^-([\psi_t]).
%$$
Note, that  if $$\|H\|^{(\pm)} > \rho^{(\pm)}([\phi^t_H]),$$ 
then $\phi^t_H$  fails to minimize the (positive/negative) Hofer length in its homotopy class.

The displacement energy of a subset $U \subset (M,\om)$ is the quantity
$$
e(U,M,\om) = \inf_{\psi_t} \{ \rho([\psi_t]) \mid \psi_0=\id  \text{ and }\psi_1(U) \cap \overline{U} = \emptyset \},
$$
where $\overline{U}$ denotes the closure of $U$.
The following result relates the negative Hofer semi-norm and the displacement energy. It is a direct application of Sikorav's curve shortening technique and the reader is referred  to Lemma 4.2 of \cite{ke2} for the entirely similar proof of the analogous result for the positive Hofer semi-norm.

\begin{Proposition}\label{shorten}
Let $H \in C^{\infty}_0(S^1 \times M)$ be a time-independent  normalized Hamiltonian that is constant and equal to its maximum value on the 
complement of an open set $U \subset M$. If $U$ has finite displacement energy 
and  $\|H\|^- > 2 e(U) $, then $$\|H\|^- > \rho^-([\phi^t_H])  + \12 \|H\|^+.$$ 
In other words, the Hamiltonian path $\phi^t_H$ does not minimize the negative Hofer length in its homotopy class.
\end{Proposition}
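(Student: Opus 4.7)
The proof adapts to the negative Hofer semi-norm the Sikorav curve-shortening argument carried out for the positive Hofer semi-norm in Lemma~4.2 of \cite{ke2}. The strategy is to construct, for each small $\eta > 0$, a normalized Hamiltonian $\tilde H \in C^{\infty}_0([\phi^t_H])$ whose negative Hofer length is strictly less than $\|H\|^- - \12\|H\|^+$.

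Fix $\eta > 0$ with $\eta < \12\|H\|^- - e(U)$ (positive by hypothesis), and pick a normalized $F \in C^{\infty}_0(S^1\times M)$ whose time-one flow $\psi = \phi^1_F$ satisfies $\psi(\overline{U}) \cap \overline{U} = \emptyset$ and $\|F\| < e(U) + \eta$. The essential input is the averaged Hamiltonian $\bar H := \12(H + H\circ \psi^{-1})$, which is normalized since $\psi$ is a symplectomorphism. Setting $M_H := \max_M H$, the hypothesis that $H \equiv M_H$ on $M\setminus U$, combined with the fact that $\psi^{-1}(U) \subset M\setminus U$ (forced by $U \cap \psi(U) = \emptyset$), implies that $H \circ \psi^{-1} \equiv M_H$ on $M\setminus \psi(U)$. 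A pointwise case analysis, exploiting that the non-constant parts of $H$ and of $H\circ \psi^{-1}$ live on the disjoint sets $\overline{U}$ and $\psi(\overline{U})$, yields
\begin{equation*}
\|\bar H\|^+ = \|H\|^+ \qquad\text{and}\qquad \|\bar H\|^- = \12\bigl(\|H\|^- - \|H\|^+\bigr).
\end{equation*}

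Following the splicing scheme of \cite{ke2}, one now uses the isotopy $\phi^t_F$ to construct a normalized $\tilde H \in C^{\infty}_0([\phi^t_H])$ generating a path genuinely homotopic rel endpoints to $\phi^t_H$, with
\begin{equation*}
\|\tilde H\|^- \leq \|\bar H\|^- + \|F\|.
\end{equation*}
The construction partitions $[0,1]$ into a very short $F$-segment reaching $\psi$ (whose Hofer length contributes at most $\|F\|$), a rescaled $H$-flow run in parallel with its $\psi$-conjugate on the disjoint set $\psi(U)$ (so that maxima and minima do not add but rather realize those of $\bar H$), and a closing segment correcting the endpoint via the commutation of the disjointly supported maps $\phi^1_H$ and $\psi\phi^1_H \psi^{-1}$. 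Chaining the estimates,
\begin{equation*}
\rho^-([\phi^t_H]) + \12\|H\|^+ \leq \|\tilde H\|^- + \12\|H\|^+ \leq \12(\|H\|^- - \|H\|^+) + \|F\| + \12\|H\|^+ = \12\|H\|^- + \|F\| < \|H\|^-,
\end{equation*}
where the last inequality uses $\|F\| < e(U) + \eta < \12\|H\|^-$.

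\emph{Main obstacle.} The hard part is verifying the splicing construction and the bound $\|\tilde H\|^- \leq \|\bar H\|^- + \|F\|$. One must track carefully which pieces of the spliced path contribute to $\|\tilde H\|^-$ versus $\|\tilde H\|^+$, and arrange the endpoint-correcting segments so that their Hofer length is absorbed into the single additive $\|F\|$ term rather than appearing twice. This mirrors the work done in \cite{ke2} for the positive semi-norm, with the roles of $\|\cdot\|^+$ and $\|\cdot\|^-$ exchanged; the adaptation is essentially a sign-swap in their estimates, but the endpoint-correction step is the point at which the analogy must be checked in detail.
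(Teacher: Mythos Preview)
Your proposal is correct and matches the paper's approach exactly: the paper does not give a proof at all, but simply states that the result ``is a direct application of Sikorav's curve shortening technique'' and refers the reader to Lemma~4.2 of \cite{ke2} for ``the entirely similar proof of the analogous result for the positive Hofer semi-norm.'' Your sketch is precisely that adaptation---averaging $H$ with its $\psi$-translate, computing $\|\bar H\|^\pm$ via the disjoint-support observation, and splicing in the $F$-segments---so you have reproduced what the paper intends.
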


\section{A special Hamiltonian flow near $L$}

In this section we construct  a special Hamiltonian $\hl$ whose Hamiltonian 
flow is supported in a tubular neighborhood of $L$.  The nonconstant 
contactible periodic orbits of this flow  project to perturbed geodesics  
on $L$. As well, the Hamiltonian path $\phi^t_{\hl}$ fails to minimize the 
negative Hofer length in its homotopy class. 
%Consider a Hamiltonian $H \colon S^1 \times T^*L \to \R$
%of the form $$H(t,q,p)= \12 \|p\|^2 + V(t, q).$$
%Let
%$
%z(t)=(q(t),p(t))
%$
%be a nondegenerate $1$-periodic orbit of $H$. Recall that the projection $q(t)$ is a 
%perturbed geodesic, i.e., a critical point of the energy functional
%$$
%\EE_g_g(q) = \int_0^1 \left( \12  \|\dot{q}(t)\|^2 + V(t, q(t)) \right) \, dt
%$$
%on the space $C^{\infty}(S^1, M)$ of smooth loops in $M$. As a critical
%point of $\EE_g$, the path $q(t)$ has a finite Morse index, $\mor(q)$.  As usual, 
%this is the number of negative eigenvalues of the Hessian of $\EE_g$ at $q$, 
%counted with multiplicity.
\subsection{Perturbed geodesic flows}
\label{geodesic}
We begin by recalling some  relevant facts about geodesic flows on $L$.
For now, we assume only that $L$ is a closed manifold without boundary.
Let $g$ be a Riemannian metric on  $L$, and consider the energy functional of $g$
which is defined on the space of smooth loops $C^{\infty}(S^1, L)$,  by
$$
\EE_g(q(t)) = \int_0^1 \12  \|\dot{q}(t)\|^2   \, dt.
$$
The critical points of $\EE_g$, $\Crit (\EE_g)$, are the closed geodesics
of $g$ with period equal to one. The closed geodesics of $g$ 
with any  positive period $T>0$ correspond to the $1$-periodic orbits of the metric $\frac{1}{T} g$, and
are thus the critical points of the functional $\EE_{\frac{1}{T}g}$. 
 
The Hessian of $\EE_g$ at a critical point $q(t)$ will be denoted by $Hess(\EE_g)_q$.
As is well  known, the space on which $Hess(\EE_g)_q$ is negative definite
is  finite-dimensional. It's dimension is, by definition, the Morse index of $q$ and will be denoted here by $\mor(q)$. 
The kernel of $Hess(\EE_g)_q$ is also finite-dimensional, and is always
nontrivial unless $L$ is a point. 
%For example,  the vector field $\dot{q}$ along a nonconstant closed geodesic 
%$q$ belongs to the kernel of $Hess(\EE_g)_q$.

A submanifold $D \subset C^{\infty}(S^1,L)$ which consists of critical points of
$\EE_g$ is said to be  Morse-Bott nondegenerate if the dimension of the kernel of 
$Hess(\EE_g)_q$ is equal to the dimension of $D$ for every $q \in D$. 
An example of such a manifold is the set of constant geodesics of any metric on $L$. This is a  Morse-Bott nondegenerate submanifold which is diffeomorphic to $L$. The energy functional $\EE_g$ is said to be Morse-Bott if all the $1$-periodic
geodesics are contained in Morse-Bott nondegenerate critical submanifolds of $\EE_g$.
Note that if $\EE_g$ is Morse-Bott, then so is $\EE_{\frac{1}{T}g}$ for any $T>0$.

\begin{Example}
\label{negative}
If $g$ is a metric with negative sectional curvature, then the nonconstant $1$-periodic geodesics occur in 
Morse-Bott nondegenerate $S^1$-families. In particular, the unparameterized geodesics of $g$ are isolated. The Morse index of every $1$-periodic geodesic is zero.
\end{Example}

\begin{Example}
\label{split}
Let $L =P_1 \times \dots \times P_k$ be a split hyperbolic manifold as defined in Section \ref{statement}. Let $g_j$ be a metric on $P_j$ with negative sectional curvature and set $$g =g_1 + \dots +g_k.$$ The nonconstant $1$-periodic geodesics of $g$ occur in 
Morse-Bott nondegenerate critical submanifolds whose dimension is no greater than $1+ \dim P_2 + \dots \dim P_k$. 
The Morse index of these closed geodesics is again zero.
\end{Example}

%
%\begin{Example}
%\label{flat}
%For  a flat metric $g$ on the $m$-dimensional torus $\T^m$, every  nonconstant $1$-periodic geodesic occurs in 
%a Morse-Bott nondegenerate critical submanifold of $\EE_g$ which is diffeomorphic to $\T^m$. The Morse indices also vanish in this case.
%\end{Example}

%\begin{Example}
%\label{round}
%Consider the standard round metric $g$ on the  $m$-dimensional  sphere $S^m$. Again the energy function $\EE_g$ is Morse-Bott. For every positive integer $j$ there is a critical submanifold, $B_j$, of  nonconstant $1$-periodic geodesics  corresponding to $j$-covered great circles. Each $B_j$ is diffeomorphic to the $2m-1$-dimensional
%Stiefel manifold $V(2,m-1)$ consisting of orthonormal $2$-frames in $\R^{m+1}$. The Morse index of each
%$q \in B_j$ is 
%\begin{equation}
%\label{ }
%\mor(q) = (2j-1)(n-1).
%\end{equation}
%\end{Example}

\subsubsection{Potential Perturbations}
It will be useful for us to perturb a Morse-Bott energy functional $\EE_g$ so that the critical 
points of the resulting functional are nondegenerate. We restrict ourselves to perturbations of the following classical form
$$
\EE_{g,V}(q) = \int_0^1 \left( \12  \|\dot{q}(t)\|^2 - V(t, q(t)) \right) \, dt,
$$
where the function $V \colon S^1 \times L \to \R$ is assumed to be smooth. 
The critical points of $\EE_{g,V}$ are solutions of the equation
\begin{equation}
\label{pg}
\nabla_t \dot{q} + \nabla_g V(t,q) = 0
\end{equation}
where $\nabla_t$ denotes covariant differentiation in the $\dot{q}$-direction with respect to the Levi-Civita 
connection of $g$, and  $\nabla_g V$ is the gradient vector field of $V$ with respect to $g$. We refer to solutions
of \eqref{pg} as perturbed geodesics.

\begin{Theorem}\cite{we}
\label{potential trans}
There is dense set $\VV_{reg}(g) \subset C^{\infty}(S^1\times L)$ such that for $V \in \VV_{reg}(g)$ the 
critical points of $\EE_{g,V}$ are nondegenerate.
\end{Theorem}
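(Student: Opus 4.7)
The plan is to apply the Sard--Smale transversality theorem to a universal moduli space of perturbed geodesics. Fix a large integer $k$ and work with the Banach manifold $\mathcal{B} = H^1(S^1, L)$ of loops together with the Banach space of potentials $\VV^k = C^k(S^1 \times L)$. The Euler-Lagrange operator
\[
\Phi(q, V) = \nabla_t \dot q + \nabla_g V(t, q(t))
\]
defines a smooth section of a Banach bundle over $\mathcal{B} \times \VV^k$ whose zero set $\mathcal{Z}$ is the universal moduli space of perturbed closed geodesics. The Hessian $\operatorname{Hess}(\EE_{g,V})_q$ coincides with the $q$-partial of the vertical linearization $D\Phi(q,V)$, so nondegeneracy of the critical points of $\EE_{g,V}$ is equivalent to $V$ being a regular value of the projection $\pi \colon \mathcal{Z} \to \VV^k$.

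The core analytic step is to show that $\mathcal{Z}$ is a Banach manifold, which amounts to checking that $D\Phi(q,V)$ is surjective at every $(q,V) \in \mathcal{Z}$. The $q$-block of the linearization is the Jacobi-type operator $J_q$, which is Fredholm and self-adjoint, so by the closed range theorem it suffices to show that any $\eta \in \ker J_q$ which is $L^2$-orthogonal to the image of the $V$-variation $\delta V \mapsto \nabla_g(\delta V)(t, q(t))$ must vanish identically. If $\eta(t_0) \neq 0$, one chooses $\delta V(t,x) = \beta(t)\psi(x)$ with $\beta$ a bump function supported in a short arc around $t_0$ and $\psi \in C^\infty(L)$ satisfying $\nabla_g \psi(q(t_0)) = \eta(t_0)$; this produces a strictly positive pairing, a contradiction.

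The main potential obstacle --- multiply covered or self-intersecting critical curves --- is sidestepped cleanly by allowing $V$ to depend on time: even if $q(t_0) = q(t_1)$ for some $t_1 \neq t_0$, the time cutoff $\beta$ isolates the contribution near $t_0$, so no injectivity hypothesis on $q$ is required. Sard--Smale then produces a residual, hence dense, subset $\VV^k_{reg} \subset \VV^k$. Finally one upgrades density from $C^k$ to $C^\infty$ by Taubes' argument: for each energy threshold $E_n \to \infty$, the set of $V \in C^\infty(S^1 \times L)$ for which every perturbed geodesic of energy at most $E_n$ is nondegenerate is $C^k$-open and $C^k$-dense, hence $C^\infty$-dense; a countable intersection produces the desired dense set $\VV_{reg}(g) \subset C^\infty(S^1 \times L)$.
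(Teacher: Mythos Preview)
The paper does not supply its own proof of this theorem; it is quoted as a result of Weber \cite{we}. Your Sard--Smale transversality argument is correct and is essentially the standard approach carried out in the cited reference, including the use of time-dependence of $V$ to bypass injectivity issues and the Taubes-type exhaustion to pass from $C^k$ to $C^\infty$.
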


When a Morse-Bott functional $\EE_g$ is perturbed, the critical submanifolds break apart into 
critical points. For a small perturbation $V$ it is possible to relate each critical point of $\EE_{g,V}$ to a specific 
critical submanifold of $\EE_g$, and to relate their indices. Here is the precise statement.

\begin{Lemma}
\label{potential}
Let $\EE_g$ be Morse-Bott and let $\Crit^a(\EE_g)= \amalg_{j=1}^{\ell} D_j $ be the 
(finite, disjoint) union of all critical submanifolds of $\EE_g$
with energy less than $a$. Let $\epsilon >0$ be small enough so that the $\epsilon$-neighborhoods 
of the $D_j$ in $C^{\infty}(S^1,L)$ are disjoint. If $V \in \VV_{reg}(g)$ is sufficiently small, then 
 each nondegenerate critical point $q(t)$ of $\EE_{g,V}$, with action less than $a$, lies in the $\epsilon$-neighborhood
 of exactly one component $D_j$ of $\Crit^a(\EE_g)$. Moreover,
\begin{equation*}
\label{ }
\mor(q) \in [\mor(D_j), \mor(D_j)+\dim(D_j)]. 
\end{equation*}
\end{Lemma}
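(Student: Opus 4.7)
The plan is to combine a standard compactness argument for the first clustering assertion with spectral perturbation theory for the Morse index bounds. For the first part, I would argue by contradiction: suppose there is a sequence $V_n \in \VV_{reg}(g)$ with $V_n \to 0$ in $C^{\infty}(S^1\times L)$, and critical points $q_n$ of $\EE_{g,V_n}$ with $\EE_{g,V_n}(q_n)<a$, such that no $q_n$ lies in any $\epsilon$-neighborhood of any $D_j$. The perturbed geodesic equation $\nabla_t\dot q_n = -\nabla_g V_n(t,q_n)$ together with the energy bound gives a uniform $C^1$ bound on $q_n$, and bootstrapping from \eqref{pg} yields uniform $C^k$ bounds for every $k$. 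By Arzel\`a--Ascoli a subsequence converges in $C^\infty(S^1,L)$ to a loop $q_\infty$ solving $\nabla_t\dot q_\infty=0$ with $\EE_g(q_\infty)\le a$, hence $q_\infty$ lies in some $D_j$. But then $q_n$ lies in the $\epsilon$-neighborhood of $D_j$ for all large $n$, contradicting the assumption. Disjointness of these neighborhoods shows that each such $q(t)$ is assigned to exactly one component.

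For the index bound, fix a component $D_j$ and a nondegenerate critical point $q$ of $\EE_{g,V}$ in its $\epsilon$-neighborhood, with a nearby $q_0\in D_j$. Working in the $H^1$-completion of $T_q C^\infty(S^1,L)$, parallel transport along the short path in $L$ from $q_0(t)$ to $q(t)$ identifies the Hessian operator $Hess(\EE_{g,V})_q$ with a perturbation of $Hess(\EE_g)_{q_0}$ whose operator norm tends to zero as $V\to 0$ and $q\to q_0$. By the Morse--Bott hypothesis, $Hess(\EE_g)_{q_0}$ is self-adjoint Fredholm, with precisely $\mor(D_j)$ strictly negative eigenvalues, a kernel of dimension exactly $\dim D_j$ (identified with the tangent space to $D_j$ at $q_0$), and a uniform positive spectral gap on the orthogonal complement. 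By the min--max principle and continuity of spectral projections for a compact self-adjoint perturbation, the $\mor(D_j)$ negative eigenvalues persist as negative eigenvalues of the perturbed Hessian, the positive part remains uniformly positive, and only the $\dim D_j$ zero eigenvalues can cross zero, in either direction. Hence
\begin{equation*}
\mor(D_j)\le\mor(q)\le\mor(D_j)+\dim D_j.
\end{equation*}

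The main technical obstacle is the spectral perturbation step: once both Hessians are transported to a common Hilbert space, one must bound the operator-norm difference in terms of $\|V\|_{C^2}$ and the $C^1$-distance between $q$ and $q_0$, so that standard continuity results for spectral projections of Fredholm self-adjoint operators apply. The Jacobi-type structure of the Hessian makes the negative spectrum finite and discrete, keeping the argument manageable, but the bookkeeping of the transport identification and the choice of $C^1$-small tubular neighborhood (which is automatic from the bootstrapped $C^k$ estimates in the first part) is the delicate point.
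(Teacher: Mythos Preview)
Your argument is correct, but it follows a genuinely different route from the paper's. The paper does not separate the clustering and index assertions at all: it passes to the $W^{1,2}$-completion $\Lambda^1(S^1,L)$, observes that $\widehat{\EE}_g$ is a smooth function on this Hilbert manifold whose Hessian at each critical point has spectrum accumulating only at $1$ (citing Klingenberg), and then invokes the generalized Morse Lemma for Hilbert manifolds due to Meyer. This produces a local normal form near each $D_j$ in which $\widehat{\EE}_g$ is a nondegenerate quadratic form in the normal directions, split according to index and coindex; perturbing by a small $V$ then yields both the localization of critical points and the index interval in one stroke, exactly as for a Morse--Bott function on a finite-dimensional manifold.

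Your approach trades this single black-box normal form for two more explicit steps: an Arzel\`a--Ascoli/bootstrap compactness argument for the clustering, and a min--max/spectral-projection continuity argument for the index bounds. This is more hands-on and avoids appealing to the infinite-dimensional Morse Lemma, at the cost of the transport bookkeeping you flag. Both arguments ultimately rest on the same spectral fact (finite negative spectrum, kernel of dimension $\dim D_j$, positive gap above), but the paper packages it via the normal form while you use it directly. One minor point worth tightening in your write-up: in the compactness step the limit $q_\infty$ satisfies only $\EE_g(q_\infty)\le a$, so you should either note that $a$ can be taken to be a regular value of $\EE_g$ (the critical values are isolated by the Morse--Bott hypothesis) or observe that for $V$ small enough the perturbed critical values stay bounded away from $a$.
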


\begin{proof}
For a proper Morse-Bott function on a finite-dimensional manifold the proof of the lemma is elementary
and requires only the generalized Morse Lemma which yields a normal form for a function near a 
Morse-Bott nondegenerate critical submanifold.
As we now describe, the present case is essentially identical to the finite-dimensional one. 
Let $\widehat{\EE}_g$ be the extension of $\EE_g$ to the space $\Lambda^1(S^1,M)$ of loops in 
$L$ of Sobolev class $W^{1,2}$. This space of loops is a Hilbert  manifold  and
the critical point sets $\Crit(\EE_g)$ and $\Crit(\widehat{\EE}_g)$ coincide, as do the corresponding
Morse indices.  Let $q$ be a critical point of $\widehat{\EE}_g$ which belongs to a critical submanifold $D$. 
By the Morse-Bott assumption, the nullity of $Hess(\widehat{\EE}_g)_q$, 
is equal to the dimension of $D$. Moreover, the only possible accumulation point for the eigenvalues of $Hess(\widehat{\EE}_g)_q$
is one, \cite{kli} Theorem 2.4.2. Hence, there are only finitely many  negative eigenvalues and the 
positive eigenvalues are uniformly bounded away from zero. Using the generalized Morse Lemma for Hilbert manifolds from \cite{me}, see also \cite{kli} Lemma 2.4.7., the result then follows exactly as in the finite-dimensional case.  
\end{proof}

\subsection{Hamiltonian geodesic flows}
Consider the cotangent bundle of $L$, $T^*L$, equipped with the 
symplectic structure $d \theta$ where $\theta$ is the 
canonical Liouville $1$-form. 
We will denote points in $T^*L$ by $(q,p)$ where $q$ is in
$M$ and $p$ belongs to $T^*_q L$.  In these local coordinates, 
$\theta = p dq$ and so $d\theta = dp \wedge dq.$

The metric $g$ on $L$  induces a bundle isomorphism between 
$TL$ and the cotangent bundle $T^*L$ and hence a cometric on $T^*L$. 
Let $K_g \colon T^*L \to \R$ be the function $K_g(q,p) = \12 \|p\|^2$. 
The Legendre transform yields a bijection between the critical points of the perturbed energy functional $\EE_{g,V}$ 
and the critical points of the  action functional
$$\AC_{{K_g}+V}\colon C^{\infty}(S^1, T^*L) \to \R$$ defined by
\begin{equation*}
\label{ }
\AC_{{K_g}+V}(x) =  \int^1_0 ({K_g}+V)(t,x(t)) \, dt - \int_{S^1} x^* \theta.
\end{equation*}
The critical points of $\AC_{{K_g}+V}$ are the $1$-periodic orbits of 
the Hamiltonian $K_g+V$ on $T^*L$. 
%Moreover, there
%is a bijection between $\Crit(\AC_{K_g+V})$ and $\Crit(\EE_{g,V})$.
%In particular, i
If $x(t)=(q(t), p(t))$ belongs to $\PP(K_g+V)$, then its
projection to $L$  is a closed $1$-periodic  solution of \eqref{pg}
with initial velocity $\dot{q}(0)$ determined by $g(\dot{q}(0), \cdot) = p(0)$.
Moreover, $x(t)$ is nondegenerate if and only if $q(t)$ is nondegenerate. 

\subsection{A perturbed geodesic flow supported near $L$}

We now assume that $L$ is a Lagrangian submanifold of $(M, \om)$ as in the 
statement of Theorem \ref{thm}. In particular, $L$ is easily displaceable and split hyperbolic. We equip $L$ with the metric $g$ from Example \ref{split} and remind the reader that the energy functional for this metric is Morse-Bott.

Consider a  neighborhood of the zero section in $T^*L$ of the following type 
$$
U_r=  \{ (q,p) \in T^*L  \mid \|p\| < r\}.
$$
By Weinstein's neighborhood theorem, for sufficiently small $r>0$, there is a neighborhood 
of $L$ in $(M, \om)$ which is symplectomorphic to $U_r$. We only consider values of $r$ 
for which this holds, and will henceforth  identify $U_r$ with a 
neighborhood of $L$ in $(M, \om)$. 
%Since $e(L) < \hbar/2$, it follows that 
%\begin{equation}
%\label{ }
%e(U_r) < \hbar/2
%\end{equation} 
%for all $r>0$ which are less than some $r_0>0$. We will only consider values of $r$ for which this inequality
%holds. 
For a subinterval $I \subset [0,r)$, we will use the notation
$$
U_I = \{ (q,p) \in U_r \mid \|p\| \in I \}.
$$

\begin{Proposition}
\label{function}
%Let $L$ be an easily displaceable Lagrangian submanifold, i.e. $e(L) <r(M,\om)/2$, and
%let $g$ be a metric on $L$ whose energy functional is Morse-Bott.
For sufficiently small  $r>0$, there is a normalized and admissible Hamiltonian  
$\hl \colon S^1 \times M \to \R$ with the following properties:
  \newcounter{Lcount}
  \begin{list}{{\bf (H\arabic{Lcount})}}
    {\usecounter{Lcount}
    \setlength{\rightmargin}{\leftmargin}}
 \item The constant $1$-periodic orbits $\hl$ correspond to the critical points of 
  an admissible  Morse function $F$ on $M$. Near these points the Hamiltonian flows of $\hl$
  and $c_0 F$ are identical for some arbitrarily small constant $c_0>0$;
  \item The nonconstant $1$-periodic orbits of $\hl$ are nondegenerate and contained in $U_{(r/5+\delta, 2r/5-\delta)} \cup U_{(3r/5+\delta, 4r/5-\delta)}$  for some $0< \delta < r/5$. Each such orbit  projects to 
a nondegenerate closed perturbed geodesic $q(t)$. Moreover, if $T$ is the period of $q$, then $q$ can be associated to 
exactly one critical submanifold $D$ of $\EE_{\frac{1}{T}g}$ and 
\begin{equation*}
\label{ }
\mor(q) \in [\mor(D), \mor(D)+\dim(D)]; 
\end{equation*}
  \item There is a point $Q \in L \subset M$ which is the unique local minimum
  of $\hl(t, \cdot)$ for all $t \in [0,1]$. Moreover,
 \begin{equation}
\label{negative bound}
 \|\hl\|^- = - \int_0^1 \hl(t,Q)\,dt > 2e(U_r);
\end{equation} 
  \item The Hofer norm of $\hl$ satisfies
\begin{equation}
\label{total bound}
2e(U_r) <\|\hl\| < r(M,\om).
\end{equation}
\end{list}
\end{Proposition}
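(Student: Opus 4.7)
The plan is to build $\hl$ on a Weinstein neighborhood $U_r$ of $L$ from three pieces: a radial profile $f(\|p\|)$ that controls where nonconstant orbits can appear, a small potential perturbation that forces those orbits to be nondegenerate, and a small multiple of an admissible Morse function $F$ on $M$ that organizes the constant orbits.

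First I would shrink $r$ so that Weinstein's theorem identifies $U_r$ symplectically with a neighborhood of $L$ and, in addition, $2e(U_r) < r(M,\om)$. Easy displaceability provides a Hamiltonian with Hofer norm less than $r(M,\om)/2$ that displaces $L$, hence also any sufficiently thin $U_r$, so this window exists. Next, fixing $0 < \delta < r/5$, I would construct a smooth profile $f\colon [0,r] \to \R$ with three constant plateaus at values $c_{\min} < c_{\mathrm{mid}} < c_{\max}$ on (essentially) $[0, r/5]$, $[2r/5, 3r/5]$, $[4r/5, r]$, connected by two linear ramps of distinct positive slopes $\sigma_1$ and $\sigma_2$ on the two prescribed annuli. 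Set $\widetilde H(q,p) = f(\|p\|)$ on $U_r$ and extend by the constant $c_{\max}$ on $M \ssminus U_r$; in the convex case modify this extension near $\p M$ to the admissible form $a e^{-\tau} + b$ with $-a < T_R$, which introduces no $1$-periodic orbits near the boundary.

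On a level $\{\|p\|=s\}$ the Hamiltonian vector field of $\widetilde H$ is $f'(s)$ times the unit cogeodesic vector field, so nonconstant $1$-periodic orbits on that level project to closed $g$-geodesics whose period is determined by $f'(s)$. By Example \ref{split} the relevant set of periods of closed geodesics of the split hyperbolic metric $g$ is discrete, with each period realized by a Morse--Bott critical submanifold of $\EE_{\frac{1}{T}g}$ of controlled dimension. Choosing $\sigma_1 \neq \sigma_2$ carefully, and making the transitions steep enough, I would arrange that each ramp captures exactly one such Morse--Bott family and that no $1$-periodic orbits occur on the transition regions or in the admissible boundary extension. Applying Theorem \ref{potential trans} then yields an arbitrarily small $V(t,q) \in \VV_{reg}(g)$ whose Legendre lift perturbs these families into nondegenerate nonconstant orbits, with Morse indices satisfying the bound of Lemma \ref{potential}; this delivers (H2). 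On the plateau regions I would insert $c_0 F$ for small $c_0 > 0$, where $F$ is an admissible Morse function on $M$ whose critical points lie in the plateau regions and whose unique global minimum is at a preselected point $Q \in L \subset U_{[0,r/5]}$. This breaks the degeneracy of the constant orbits into the critical points of $F$, with the flow of $\hl$ agreeing with that of $c_0 F$ near each of them, giving (H1) together with the distinguished minimum $Q$. Finally I normalize $\hl$ by subtracting its spatial mean.

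The Hofer bounds (H3) and (H4) then reduce to amplitude selection: up to $O(c_0)$, $\|\hl\|^-$ equals $-c_{\min}$ and $\|\hl\|$ equals $c_{\max} - c_{\min}$, and one chooses these so that $-c_{\min} > 2e(U_r)$ and $c_{\max} - c_{\min}$ lies in the nonempty interval $\bigl(2e(U_r), r(M,\om)\bigr)$ produced in the first step. The main obstacle is the simultaneous parameter juggle in the middle step: the slopes $\sigma_i$, plateau widths, the size of the potential $V$, and the scale $c_0$ must all be chosen so that each ramp captures the desired Morse--Bott family and no others, the transitions and the boundary extension contribute no spurious orbits, Lemma \ref{potential} applies with indices intact, and the total Hofer oscillation stays inside the narrow window forced by easy displaceability. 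This is standard once one exploits the discreteness of the closed-geodesic periods of a split hyperbolic metric, but it requires care in tracking how the three independent perturbations interact.
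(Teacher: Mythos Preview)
Your profile choice creates a genuine obstruction to (H2). If the two ramps on the annuli $U_{(r/5,2r/5)}$ and $U_{(3r/5,4r/5)}$ are \emph{linear} with constant slopes $\sigma_1,\sigma_2$, then on any level $\{\|p\|=s\}$ inside a ramp the Hamiltonian vector field of $f(\|p\|)$ equals $\frac{f'(s)}{s}\,X_{K_g}=\frac{\sigma_i}{s}\,X_{K_g}$, so a $1$-periodic orbit there projects to a closed geodesic of length exactly $\sigma_i$, \emph{independently of} $s$. Hence either $\sigma_i$ misses the length spectrum and the ramp carries no nonconstant $1$-periodic orbits at all, or $\sigma_i$ is a geodesic length and \emph{every} radial level in that ramp carries a copy of the corresponding Morse--Bott family. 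This extra one-parameter radial degeneracy is not of Morse--Bott type for $\AC_{\widetilde H}$, and a potential $V(t,q)$ depending only on the base point cannot break it; Theorem~\ref{potential trans} and Lemma~\ref{potential} therefore do not apply, and the perturbed orbits are not nondegenerate.

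The paper avoids this by inverting the roles of your regions: the profile $\nu$ is taken strictly \emph{convex} on $(r/5,2r/5)$, \emph{linear} on $[2r/5,3r/5]$ with slope $C$ chosen \emph{outside} the length spectrum, and strictly \emph{concave} on $(3r/5,4r/5)$. Then $\nu'$ is strictly monotone on each of the two outer annuli, so each geodesic length in $(0,C)$ is realized on exactly one radial level per annulus; the nonconstant orbits lie on finitely many isolated levels and form honest Morse--Bott families, which the potential perturbation of Lemma~\ref{potential} then makes nondegenerate. This convex/concave dichotomy is moreover what later produces the index shift in Corollary~\ref{cor}, which feeds directly into Proposition~\ref{index} and the final Maslov bound; a piecewise-linear profile would not yield that shift, so the downstream argument would also need to be reworked.
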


\begin{proof}
The construction of $\hl$ is elementary but somewhat involved. We divide the 
process into four steps. Some of the constituents of $\hl$ 
which are defined in these steps will be referred to in later sections.

\medskip
\noindent {\bf Step 1: A geodesic flow supported near $L$.}
\medskip

Let $\nu = \nu(A,B,C, r)  \colon [0,+\infty) \to \R$ be a smooth function with the following properties
\begin{itemize}
  \item $\nu  = -A$ on $[0, r/5]$;
  \item $\nu', \nu'' > 0$ on $(r/5, 2r/5)$; 
  \item $\nu ' = C $ on $[2r/5, 3r/5]$;
   \item $\nu' > 0$ and $\nu''<0 $ on $(3r/5, 4r/5)$; 
  \item $\nu = B$ on $[4r/5 ,+\infty)$.
\end{itemize}
Define the function $K_{\nu}$ on $T^*L$ by
%$$K_{\nu}(q,p)= \nu(\sqrt{2K}).$$
$$K_{\nu}(q,p)=
\begin{cases}
 \nu(\|p\|) & \text{if $(q,p)$ is in $U_r$}, \\
  B    & \text{otherwise}.
\end{cases}
$$

Since $L$ is easily displaceable, for sufficiently small $r>0$ we have 
\begin{equation*}
\label{ }
e(U_r) < r(M,\om)/2.
\end{equation*}
Choose the constant $A$, so that 
\begin{equation}
\label{short}
2e(U_r)<A< r(M,\om).
\end{equation}
Restricting $r$ again, if necessary, we can choose 
the constant $B$ so that 
\begin{equation*}
0 < B <  A \frac{\Vol(U_r)}{\Vol(M \ssminus U_r)}.
\end{equation*}
For these choices of $r$, $A$ and $B$, we can construct $\nu$, as above,  so that 
$K_{\nu}$ is normalized and satisfies
\begin{equation}
\label{shorter}
2e(U_r)<\|K_{\nu}\|< r(M,\om).
\end{equation}
We may also choose the positive constant $C$ so that it is not the length 
of any closed geodesic of $g$.

The Hamiltonian flow of $K_{\nu}$ is trivial in both $U_{r/5}$ and the complement of $U_{4r/5}$. 
Hence, each nonconstant $1$-periodic orbit $x(t)=(q(t), p(t))$ of $K_{\nu}$ is contained in 
$U_{(r/5, 4r/5)}$ where  
\begin{equation*}
\label{ }
X_{K_{\nu}} (q,p) = \left( \frac{\nu'(\|p\|)}{\|p\|}  \right)  X_{K_g}(q,p).
\end{equation*}
%The length of such an orbit is 
%\begin{equation}
%\label{ }
%\ell(z) = \int_0^1 \nu'(\|p(t)\|) \, dt = \nu'(\|p(0)\|).
%\end{equation}
Our choice of $C$ implies that all nonconstant orbits of $K_{\nu}$ occur on the level sets  
contained in $U_{(r/5 , 2r/5)}$ or  $U_{(3r/5 , 4r/5 )}$,
where $\nu$ is convex or concave, respectively.
In fact, these nonconstant 
orbits lie in $$U_{(r/5 +\delta, 2r/5 -\delta)} \cup U_{(3r/5 + \delta, 4r/5 - \delta)}$$ for some $r/5>\delta>0$. This follows from the fact that
$dK_{\nu}$ equals zero
along the boundary of $U_{(r/5, 2r/5)} \cup U_{(3r/5, 4r/5)}$.

\medskip
\noindent {\bf Step 2: A Morse function isolating $L$.}
\medskip

Let $F_0 \colon M \to \R$ be an admissible Morse-Bott function 
with the following properties:
\begin{itemize}
\item The submanifold $L$ is a critical submanifold with index equal to $0$.
 \item On $U_r$,  we have $F_0=f_0(\|p\|)$ for some increasing function $f_0 \colon [0,r] \to \R$ which 
is strictly convex on $[0, 2r/5)$, linear on $[2r/5, 3r/5]$, and strictly concave on $(3r/5,r]$.
\item All critical submanifolds other than $L$ are isolated nondegenerate critical points with strictly positive Morse indices.
\end{itemize}
Such a function is easily constructed by starting with the square of the distance function from $L$ with respect to a metric which coincides, in the normal directions, with the cometric of $g$ inside $U_r$. 
This distance function can then be deformed within $U_r$ to obtain the first and second properties above. Perturbing the resulting function away from $U_r$, one can ensure that it is Morse there. Then performing some elementary handle slides and cancelations away from $U_r$, as in \cite{mi}, one can get rid of all other local minima.
%Homotope to  $f_0(\|p\|)$ in $U_r$.
%Perturb to Morse function $F_0'$ away from $U_{r + \eps}$. Slide and cancel index zero critical 
%points of $F_0'$ on $(F_0')^{-1}([r+\eps, \infty))$. 
 
Let $F_L \colon L \to \R$ be a Morse function with a unique local minimum at a point $Q$ in $L$.
Choose a bump function $\hat{\sigma} \colon [0,+\infty) \to \R$ such that $\hat{\sigma}(s)=-1$ for $s$ near zero and 
$\hat{\sigma}(s)=0$ for $s \geq r/5$. Let $\sigma = \hat{\sigma}(\|p\|)$ be the corresponding function on $M$ with 
support in $U_{r/5}$ and set
$$
F= F_0 + \eps_L \cdot \sigma \cdot F_L.
$$
For a sufficiently small choice of $\eps_L>0$, $F$ is a Morse 
function whose critical points away from $U_{r/5}$ agree with those of 
$F_0$ and whose critical points in $U_{r/5}$ are precisely the critical points of 
$F_L$ on $L \subset M$  (see, for example, \cite{bh} page 87).

\medskip
\noindent {\bf Step 3: Pre-perturbation.}
\medskip

For $c_0>0$, consider the function
$$
H_0 = K_\nu + c_0 F.
$$
By construction, we have
$$
H_0 =
\begin{cases}
     -A+c_0 F & \text{on $U_{r/5}$}, \\
     (\nu + c_0 f_0)(\|p\|) & \text{on $U_{[r/5, 4r/5]}$},\\
     B + c_0 F & \text{elsewhere}.
\end{cases}
$$
From this expression it is clear that each $H_0$ is a Morse function with 
$\Crit(H_0) =\Crit(F)$. As well, $Q$ is the unique local minimum of $H_0$. 
Moreover, when $c_0$ is sufficiently small, the nonconstant $1$-periodic 
orbits of $H_0$, like those of $K_{\nu}$,  are contained 
in 
$$U_{(r/5 +\delta, 2r/5-\delta)} \cup U_{(3r/5+ \delta, 4r/5- \delta)}$$
for some $r/5> \delta>0$. 
%This follows from our choice of the constant $C$ in the definition of $\nu$, as well as the fact that 
%the length spectrum of $g$ is compact. 
%The fact that  for sufficiently small $\eps_0$  there are no $1$-periodic orbits in $U_{\scriptscriptstyle{[2r/5+ \delta, 3r/5]}}$ 
%
%\begin{equation}
%\label{repar}
%X_{H} (q,p) = \left( \frac{(\nu+\eps_0 f_0)'(\|p\|)}{\|p\|}  \right)  X_{K_g}(q,p).
%\end{equation}

%We also note that if $\EE_g$ is Morse-Bott, then so is $\AC_{H_0}$, \cite{bps} Lemma. This follows
%from the fact that $\nu + c_0 f_0$  is strictly convex on $U_{\scriptscriptstyle{2r/5}}$,  and is strictly concave 
%on $U_{\scriptscriptstyle{(3r/5,r)}}$. 

\medskip
\noindent {\bf Step 4: Perturbation of $H_0$.}
\medskip

The function $H_0$ has properties {\bf (H1)}, {\bf(H3)} and {\bf(H4)}. To obtain a function with property {\bf(H3)}
we must  perturb $H_0$ so that the $1$-periodic orbits of the resulting Hamiltonian are nondegenerate.

%  which consist of nonconstant 
%$1$-periodic orbits of $H_0$. 
Let $N^{\rho}$ be a critical submanifold of $\AC_{H_0}$ which is contained in the level set $\|p\| = \rho$. 
Denote the  projection of $N^{\rho}$ to $L$ by $D^{\rho}$. Then $D^{\rho}$ is a Morse-Bott nondegenerate set of 
periodic geodesics with period $\left((\nu' + c_0f_0')({\rho})\right)^{-1}$. Alternatively, 
$D^{\rho}$ can be viewed as a collection of $1$-periodic  geodesics of the 
metric $(\nu' + c_0f_0')({\rho})g$. We will adopt this  latter point of view. 

There are finitely many critical submanifolds of $\AC_{H_0}$.
We label the submanifolds of $1$-periodic closed geodesics
 which appear as their projections by
$$
\{ D^{{\rho}_j}_j  \mid j=1, \dots \ell \}.
$$
Theorem \ref{potential trans}, implies that the set of potentials $$\bigcap_{j=1, \dots, \ell} \VV_{reg}((\nu' + c_0f_0')({\rho}_j) g)$$
is dense in $C^{\infty}(S^1 \times L)$.  We can choose a $V$ in this set
which is arbitrarily small  with respect to the $C^{\infty}$-metric. By 
Lemma \ref{potential}, the projection, $q(t)$,  of  each
$1$-periodic orbit $x(t)$ of $\nu({K_g} + V)$ is then nondegenerate and 
lies arbitrarily close to (within a fixed distance of ) exactly one of the  $D^{{\rho}_j}_j$ and  satisfies
\begin{equation*}
\label{ }
\mor(q) \in [\mor(D^{{\rho}_j}_j), \mor(D^{{\rho}_j}_j)+\dim(D^{{\rho}_j}_j)]. 
\end{equation*}

Given such a $V$ we define $V_0 \colon S^1 \times M \to \R$  so that
$$
V_0= 
\begin{cases}
 V(t,q)     & \text{, in $U_{(2r/5 +\delta, 3r/5-\delta)}$} \\
  0   & \text{, on the complement of $U_{(2r/5, 3r/5)}$}.
\end{cases}
$$ 
Clearly, the function $V_0$,  like $V$ itself, can be chosen to be arbitrarily small. 
We then define $\hl$ by 
$$
\hl(t,q,p)=
\begin{cases}
   (\nu+c_0 f_0)\left(\sqrt{\12 \|p\|^2 +V_0(t,q,p)}\right)   & \text{for $(q,p)$ in $U_{(2r/5, 3r/5)}$}, \\
    H_0(q,p)  & \text{otherwise}.
\end{cases}
$$
This function has properties {\bf(H1)} - {\bf(H4)} as desired. 
It only remains to normalize $\hl$. Adding the function  
$$-\int^1_0 \hl(t, \cdot) \, \om^n$$ 
to $\hl$ we get a  normalized Hamiltonian. Since $\hl$ is a small perturbation
of $K_{\nu}$, this new Hamiltonian still has properties {\bf (H1)} - {\bf (H4)}. For simplicity, we still denote it by $\hl$.
%

%Note that $\hl$ is a small perturbation
%of $K_{\nu}$ which was normalized.   
%If we add to $\hl$ the small function of time 
%$$-\int^1_0 \hl(t, \cdot) \, \om^n,$$ 
%we obtain a normalized function.  Since he difference $\hl - K_{\nu}$ is arbitrarily small in $C^(\infty)(S^1 \times M)$, 
%this new normalized function still  
\end{proof}

\subsection{Additional properties of $\hl$}
Let $H$ be Floer Hamiltonian. A spanning disc for a $1$-periodic orbit $x$ in $\PP(H)$
is a smooth map $ w \colon D^2 \subset \C \to M$ such that $w(e^{2\pi i t}) = x(t)$. The action of $x$ with respect to $w$ is defined as 
\begin{equation*}
\label{ }
\AC_{H}(x,w) = \int_0^1H(t,x(t)) \, dt - \int_{D^2} w^* \om.
\end{equation*}
A spanning disc also determines a homotopy class of trivializations of $x^*TM$
and hence a Conley-Zehnder index $\cz(x,w)$. This index is described in \S \ref{cz}. We only mention here that it is normalized so that for a critical point $p$  of a $C^2$-small Morse function, 
and the constant spanning disc $w_p(z)=p$, we have  
\begin{equation*}
\label{ }
\cz(p,w_p)= \12 \dim M - \mor(p).
\end{equation*}
 
The following result provides a simple criterion for recognizing nonconstant periodic orbits of  the Hamiltonian $\hl$ on $(M^{2n}, \om)$ constructed in Proposition \ref{function}. 
 
\begin{Lemma}\label{nonconstant}
If $x$ is a contractible $1$-periodic orbit of $\hl$ which admits a spanning disc $w$
such that $-\|\hl\|^- < \AC_{\hl}(x,w) \leq \|\hl\|^+$ and  $\cz(x,w)=n$, then $x$ is nonconstant.
\end{Lemma}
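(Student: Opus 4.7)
My plan is to argue by contradiction. Suppose $x(t) \equiv p$ is constant. By property $\mathbf{(H1)}$, $p$ is a critical point of the admissible Morse function $F$, and near $p$ the Hamiltonian vector field of $\hl$ coincides with that of the $C^2$-small Morse function $c_0 F$; in particular $\hl(t,\cdot)$ is time-independent at $p$. For the constant spanning disc $w_p(z) = p$, the normalization of the Conley--Zehnder index recalled just before the lemma gives $\cz(x, w_p) = n - \mor(p)$, and since $\int_{D^2} w_p^* \om = 0$ the action reduces to $\AC_{\hl}(x, w_p) = \hl(p)$.

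Given the general spanning disc $w$, set $A = [w \# \bar w_p] \in \pi_2(M)$. The standard change-of-capping identities then read
$$
\cz(x,w) - \cz(x,w_p) = \pm 2 c_1(A), \qquad \AC_{\hl}(x,w) - \AC_{\hl}(x,w_p) = -\om(A).
$$
The hypothesis $\cz(x,w) = n$ forces $c_1(A) = \pm \mor(p)/2$, so by the proportionality relation $c_1 = \upsilon \om$ on $\pi_2(M)$ we obtain the key equivalence
$$
c_1(A) = 0 \ \Longleftrightarrow\ \om(A) = 0.
$$

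Next I would extract $\om(A) = 0$ from the action bounds. By $\mathbf{(H3)}$, $Q$ is the unique spatial minimum of $\hl(t,\cdot)$ for each $t$, so $\hl(p) \in [-\|\hl\|^-, \|\hl\|^+]$, with the left endpoint attained precisely when $p = Q$. Rewriting the assumption
$$
-\|\hl\|^- < \hl(p) - \om(A) \leq \|\hl\|^+
$$
places $\om(A)$ in a half-open interval of length $\|\hl\|^+ + \|\hl\|^- = \|\hl\| < r(M,\om)$ by $\mathbf{(H4)}$. Rationality forces $\om(A) \in r(M,\om)\cdot \Z$, and this interval is contained in $(-r(M,\om), r(M,\om))$, so the only admissible value is $\om(A) = 0$. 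When $p = Q$ the interval degenerates to $[-\|\hl\|, 0)$, which excludes $0$, yielding a direct contradiction; hence $p \neq Q$ and $\om(A) = 0$.

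The equivalence above now gives $c_1(A) = 0$, so $\mor(p) = 0$ and $p$ is a local minimum of $F$. But the construction of $F$ in Step 2 (via $\hat\sigma \cdot F_L$) arranges $Q$ to be the unique local minimum of $F$, forcing $p = Q$ and contradicting the previous step. The main bookkeeping subtlety is aligning the sign conventions in the two change-of-capping formulas and checking the time-independence of $\hl$ at each critical point of $F$; the latter holds because $\Crit(F) \subset U_{r/5} \cup (M \setminus U_r)$ is disjoint from the support $U_{(2r/5, 3r/5)}$ of the time-dependent perturbation $V_0$, which is the only source of $t$-dependence in $\hl$.
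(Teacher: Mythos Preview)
Your argument is correct and follows essentially the same route as the paper: assume $x\equiv p$, use property \textbf{(H1)} together with the Conley--Zehnder normalization to get $\cz(x,w)=n-\mor(p)+2c_1(A)$, then combine the action bounds with \textbf{(H4)} and rationality to force $\om(A)=0$, whence proportionality gives $c_1(A)=0$, $\mor(p)=0$, and $p=Q$, contradicting the strict lower action bound. The only cosmetic differences are that the paper treats $[w]$ directly as a class in $\pi_2(M)$ rather than introducing the reference disc $w_p$, and it organizes the dichotomy as ``$\om([w])=0$ vs.\ $\om([w])\neq 0$'' instead of ``$p=Q$ vs.\ $p\neq Q$''; the logic is the same. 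One small overstatement: the ``equivalence'' $c_1(A)=0\Leftrightarrow\om(A)=0$ fails when the proportionality constant $\upsilon$ vanishes, but you only use the implication $\om(A)=0\Rightarrow c_1(A)=0$, which holds regardless.
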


\begin{proof}
Arguing by contradiction, we assume that $x(t)=P$ for some point $P$ in $M$. 
The spanning disc $w$ then represents an element 
$[w]$ in $\pi_2(M)$, and we have 
\begin{equation*}\label{action}
\AC_{\hl}(x,w) = \int_0^1 \hl(t,P) \,dt - \om([w]).
\end{equation*}
By property {\bf (H1)}, the point $P$ corresponds to a critical point of $F$ and the Hamiltonian 
flow of $\hl$ and $c_0 F$ are identical near $P$. The normalization of the 
Conley-Zehnder index  then yields
\begin{equation}\label{indexi}
\cz(x,w) = n - \mor(P) + 2c_1([w]).
\end{equation}

If $\om([w])=0$, then the proportionality of $(M,\om)$ implies that $c_1([w]) = 0$.\footnote{This is the only point where we use the assumption that $(M, \om)$ is proportional.} It then follows from \eqref{indexi} that 
the Morse index of $P$ must be zero. 
This implies that $P=Q$, since $Q$ is the unique fixed local minimum of $\hl$. However, the action of $Q$ 
with respect to a spanning disc $w$ with $\om([w])=0$ is equal to 
$-\|\hl\|^-$. This is outside the assumed action range and hence a contradiction.

We must therefore have $\om([w])\neq 0$ and thus, by the definition of $r(M, \om)$ and property {\bf (H4)}
$$|\om([w])| \geq r(M,\om)> \|\hl\|.$$
If $\om([w])>0$, then
\begin{eqnarray*}
\AC_{\hl}(x,w) & \leq  & \int_0^1 \hl(t,P)\,dt  - \|\hl\| \\
{} & = & \int_0^1 \hl(t,P)\,dt - \|\hl\|^+  - \|\hl\|^-\\
{} & \leq &  - \|\hl\|^-
\end{eqnarray*}
which is again outside the assumed action range.

For the remaining case, $\om([w])<0$, we have  
$$
\AC_{\hl}(x,w) \geq \int_0^1 \hl(t,P) \,dt + \|\hl\| = \int_0^1 \hl(t,P)\,dt + \|\hl\|^+ + \|\hl\|^-.
$$
Hence, either $\AC_{\hl}(P,w) > \|\hl\|^+$ or $P=Q$. Both of these conclusions again contradict our hypotheses, and so $x$ must be nonconstant. 

\end{proof}

Periodic orbits meeting the previous criteria will be detected using the techniques  developed in 
\cite{ke1} to study length minimizing Hamiltonian paths. The following intermediate result will be crucial 
in applying these methods.
\begin{Lemma}
\label{short-g}
There is a normalized admissible Hamiltonian $G_{\scriptscriptstyle{L}}$ on $M$ such that 
\begin{enumerate}
    \item the admissible Hamiltonian  path
$\phi^t_{G_{\scriptscriptstyle{L}}}$ is homotopic to $\phi^t_{\hl}$ relative its end points;
  \item $\|G_{\scriptscriptstyle{L}}\|^- < \|H_L\|^-$.
\end{enumerate}
\end{Lemma}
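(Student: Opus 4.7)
The plan is to apply Proposition \ref{shorten} after first replacing $H_L$ by a homotopic Hamiltonian of the shape required by that proposition. Proposition \ref{shorten} demands a time-independent Hamiltonian that is constant and equal to its maximum value on the complement of a displaceable open set. The Hamiltonian $H_L$ from Proposition \ref{function} is only approximately of this form, due to the small time-dependent potential $V_0$ and the Morse perturbation $c_0 F$ built into its construction.

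The first step is to produce a time-independent Hamiltonian $\widetilde{H}$ which agrees with $H_L$ near $Q$, is constant equal to its maximum value on $M \setminus U_r$, and generates a path homotopic to $\phi^t_{H_L}$ relative to endpoints. I would build $\widetilde{H}$ by time-averaging $H_L$ and then smoothly flattening it on $M \setminus U_r$ to the constant $\max H_L$; the time-$1$ map of the resulting Hamiltonian differs from $\phi^1_{H_L}$ by a Hamiltonian diffeomorphism concentrated in $M \setminus U_r$ and of arbitrarily small Hofer norm. Concatenating with a short correcting path supported in $M \setminus U_r$, and then inserting a small Hamiltonian loop in the same region to match homotopy classes, yields a path in the class of $\phi^t_{H_L}$ generated by a single Hamiltonian $\widetilde{H}$. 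Since $c_0 F$ and $V_0$ are arbitrarily small in $C^{\infty}$, one arranges $|\|\widetilde{H}\|^- - \|H_L\|^-| < \delta$ for any prescribed $\delta > 0$, while $\|\widetilde{H}\|^+$ stays bounded below by a definite positive constant $c$ determined by the profile $\nu$ and the choice of $B>0$ in Step~1 of the construction of $H_L$; in particular, by property \textbf{(H3)}, $\|\widetilde{H}\|^- > 2 e(U_r)$ is preserved.

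Next, I would apply Proposition \ref{shorten} to $\widetilde{H}$ with $U = U_r$. Since $\rho^-([\phi^t_{\widetilde{H}}])$ is defined as an infimum, the conclusion produces, for any $\epsilon > 0$, a normalized $G_L \in C^{\infty}_0([\phi^t_{\widetilde{H}}]) = C^{\infty}_0([\phi^t_{H_L}])$ with
\[
\|G_L\|^- \;<\; \|\widetilde{H}\|^- - \tfrac{1}{2}\|\widetilde{H}\|^+ + \epsilon
\;\leq\; \|H_L\|^- + \delta - \tfrac{c}{2} + \epsilon.
\]
Choosing $\epsilon, \delta < c/4$ delivers $\|G_L\|^- < \|H_L\|^-$. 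Admissibility of $G_L$ in the convex case is preserved automatically, since every modification is supported either inside $U_r$ or in a compact region of $M \setminus U_r$ disjoint from $\partial M$; near $\partial M$ the Hamiltonian $G_L$ coincides with $H_L$ and is therefore admissible.

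The main obstacle is the first step: producing $\widetilde{H}$ in exactly the form required by Proposition \ref{shorten} while keeping its generated path homotopic rel endpoints to $\phi^t_{H_L}$. The constituent techniques (time-averaging, correction by small Hamiltonian loops, and exploitation of the $C^{\infty}$-smallness of $c_0 F$ and $V_0$) are standard, but one must simultaneously track the homotopy class, both Hofer norms, and the constant-at-maximum condition outside $U_r$.
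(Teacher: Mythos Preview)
Your overall strategy---apply Sikorav curve-shortening and absorb the small perturbations $c_0 F$ and $V_0$---is the right one, but your execution creates unnecessary difficulties and contains a genuine gap. The paper's proof avoids your ``main obstacle'' entirely by observing that the autonomous Hamiltonian $K_\nu$ built in Step~1 of Proposition~\ref{function} \emph{already} satisfies the hypotheses of Proposition~\ref{shorten}: it is time-independent, normalized, constant and equal to its maximum $B$ on $M\setminus U_r$, and has $\|K_\nu\|^- > 2e(U_r)$ by \eqref{shorter}. So the paper applies Proposition~\ref{shorten} directly to $K_\nu$, obtaining $G_\nu \in C^\infty_0([\phi^t_{K_\nu}])$ with $\|G_\nu\|^- \leq \|K_\nu\|^- - 2\zeta$. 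It then sets $G_L$ to be the generator of $\phi^t_{H_L}\circ(\phi^t_{K_\nu})^{-1}\circ\phi^t_{G_\nu}$; since $H_L$ is $C^\infty$-close to $K_\nu$, the composition factor $F_\nu$ generating $\phi^t_{H_L}\circ(\phi^t_{K_\nu})^{-1}$ is $C^\infty$-small, and a triangle-inequality estimate gives $\|G_L\|^- \leq \|H_L\|^- - \zeta$. No modification of $H_L$ is needed.

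Your route, by contrast, tries to deform $H_L$ itself into a Hamiltonian $\widetilde H$ meeting the hypotheses of Proposition~\ref{shorten}. But your recipe---time-average, flatten, then concatenate a correcting path and insert a loop---produces a \emph{time-dependent} $\widetilde H$, whereas Proposition~\ref{shorten} requires a time-independent one; this is the gap. Your admissibility claim is also incorrect: the $G_L$ you extract lies in $C^\infty_0(S^1\times M)$, hence is constant near $\partial M$, and does not coincide with $H_L$ there (which has nonzero slope $a<0$). The paper handles admissibility by noting that $F_\nu = H_L - B$ near $\partial M$ and $G_\nu \in C^\infty_0$, so their composition $G_L$ inherits the admissible form from $H_L$.
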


\begin{proof}
First we note that inequality \eqref{shorter} and Proposition \ref{shorten}, together imply that the path $\phi^t_{K_{\nu}}$ does not minimize the 
negative Hofer length in its homotopy class. Hence, 
there is a normalized Hamiltonian $G_{\nu}$ in $C^{\infty}_0([\phi^t_{K_{\nu}}])$ such that 
\begin{equation}\label{g2h}
\|K_{\nu}\|^- \geq \|G_{\nu}\|^- + 2\zeta
\end{equation}
for some $\zeta>0.$ 

%We now show that the flow $\phi^t_{\hl}$ can 
%be shortened in its 
%homotopy class by at least $\eta$.

Now consider the Hamiltonian flow $\phi^t_{\hl} \circ (\phi^t_{K_{\nu}})^{-1}$  which is generated 
by the Hamiltonian 
$$
F_{\nu} =  \hl - K_{\nu}\circ \phi^t_{K_{\nu}} \circ (\phi^t_{\hl})^{-1}.
$$
%Since some $M_{\eps}$ is invariant under the flow of $\hl$, and 
%$dK_{\nu}$ vanishes near $\p M$, it is clear that $F$ is admissible.
By construction,  $\hl$ is arbitrarily close to $K_{\nu}$ in the $C^{\infty}$-topology. Hence,
$F_{\nu}$ is also arbitrarily close to zero in the $C^{\infty}$-topology. 

The Hamiltonian flow 
\begin{equation}
\label{short}
\phi^t_{\hl} \circ (\phi^t_{K_{\nu}})^{-1} \circ \phi^t_{G_{\nu}}
\end{equation}
is generated by the normalized Hamitlonian
$$
G_{\scriptscriptstyle{L}} = F_{\nu} +G_{\nu} \circ (\phi^t_{F})^{-1}.
$$
Since $F_{\nu}=\hl - B$ near the boundary of $M$ and $G_{\nu}$ belongs to $C^{\infty}_0(S^1 \times M)$, we see that
$\gl$ is also admissible. 
%Since some $M_{\eps}$ is invariant under the flow of $\hl$, and 
%$dK_{\nu}$ vanishes near $\p M$, it is clear that $F$ is admissible.
%Since $F$ is admissible and $G$ is in $C^{\infty}_0(S^1 \times M)$, the function 
%$G \circ (\phi^t_{F})^{-1}$ is constant on $M_{\eps}$ for some $\eps>0$.
%Hence, 
%Arguing as above, we see that the Hamiltonian $G_{\scriptscriptstyle{L}}$ is admissible.
The flow of $G_L$ is  homotopic to $\phi^t_{\hl}$ via the homotopy of admissible Hamiltonian flows
$$
s \mapsto \phi^t_{\hl} \circ (\phi^t_{sK_{\nu}})^{-1} \circ \phi^t_{sG_{\nu}}.
$$
Finally, by choosing $\| \hl - K_{\nu} \|_{C^{\infty}}$ to be sufficiently small, we  have 
\begin{eqnarray*}
\|G_{\scriptscriptstyle{L}}\|^-  & \leq &   \|F_{\nu}\|^- + \|G_{\nu}\|^- \\
{} & \leq & \|F_{\nu}\|^- + \|K_{\nu}\|^- - 2 \zeta \\
{} & \leq & \|\hl\|^- +  \|\hl - K_{\nu}\|^- +\|F_{\nu}\|^- - 2\zeta \\
{} & \leq & \|\hl\|^- - \zeta.
\end{eqnarray*}

\end{proof}

\section{Index relations}

Let $x$ be a $1$-periodic orbit of $\hl$ with spanning disc $w$.
In this section we establish an identity, \eqref{basic}, which relates the Conley-Zehnder index 
of $x$ with respect to $w$, the Maslov index of $w$, and the Morse index of the 
perturbed geodesic determined by $x$.  For a split hyperbolic Lagrangian submanifold $L$,  this identity yields crucial bounds for the Maslov index of $w$ which depend on the dimensions of the factors of $L$ and the  Conley-Zehnder index of $x$ with respect to $w$. The results presented in this section are not new, but we were unable to find a reference for all of them which was suitable for our purposes. 

\subsection{Basic indices}
\label{basic indices}

There are two classical versions of the Maslov index in symplectic linear algebra (see, for example, \cite{mcsa:book}). 
The first of these indices is defined for continuous loops in
$\Lambda_{2n}$, the set of Lagrangian subspaces of $(\R^{2n}, \om_{2n})$. 
We denote  this index by $\mas$. As noted by Arnold in \cite{ar}, it can be
defined as an intersection number. We now recall the generalization of this interpretation from \cite{rs}.

Let $\eta \colon S^1 \to \Lambda_{2n}$  be a loop of Lagrangian subspaces and  let $V \in \Lambda_{2n}$ be a fixed reference space. One calls $t_0 \in S^1$ a {\bf crossing}  
of $\eta$ (with respect to $V$) if $\eta(t_0)$ and $V$ intersect nontrivially. At a crossing $t_0$,  one can define a crossing form $Q(t_0)$ on 
$\eta(t_0)\cap V$ as follows. Let $W \in \Lambda_{2n}$ be transverse to $\eta(t_0)$. For each $v$ in $\eta(t_0)\cap V$  we define, for $t$ near $t_0$, the 
path $w(t)$ in $W$ by $$v+ w(t) \in \eta(t).$$ We then set $$Q(t_0)(v) = \frac{d}{dt}\Big|_{t=t_0} \om (v, w(t)).$$ The crossing $t_0$
is said to be {\bf regular} if $Q(t_0)$ is nondegenerate. If all the crossings of $\eta$ are regular then they are isolated and the Maslov index is defined 
by 
\begin{equation}
\label{maslov}
\mas(\eta;V) = \sum_{t \in S^1} sign(Q(t)),
\end{equation}
where $\emph{sign}$ denotes the signature and the sum is over all crossings. This integer is independent of the choice of $V$ (as well as the choices of $W$ at each crossing).

The second classical Maslov index is defined for continuous loops $\gamma \colon S^1 \to Sp(2n)$ where $Sp(2n)$ is the group of $2n \times 2n$ real matrices which preserve $\om_{2n}$. We denote this integer by $m(\gamma)$
and refer the reader to \cite{mcsa:book} for its definition.
We note that it is related to the Maslov index for loops of Lagrangian subspaces in the following way. 
Recall that the graph of  a matrix $A \in Sp(2n)$, $\Gamma_A$, is a Lagarangian subspace 
of the product $\R^{4n}=\R^{2n} \times \R^{2n}$  equipped with the 
symplectic form $\om_{2n}  \times (-\om_{2n})$. To a loop $\gamma(t)$ in $Sp(2n)$ 
one can then associate a loop of Lagrangian subspaces  $\Gamma_{\gamma(t)}$ in $\R^{4n}$. In this case,  
\begin{equation}
\label{index ident}
2m(\gamma) = \mas(\Gamma_{\gamma}).
\end{equation}

One can also define a Maslov-type index for 
certain paths in $Sp(2n)$. This was first defined by Conley and Zehnder in \cite{cz}.
Let $Sp^*(2n)$ be the subset of $Sp(2n)$ which consists of matrices which do not have one as an eigenvalue.
Set
$$
\mathbf{Sp}(2n) = \left\{ \Phi \in C^0 ([0,1], Sp(2n)) \mid \Phi(0) = \id,\, \Phi(1) \in Sp^*(2n) \right\}.
$$
The Conley-Zehnder index associates an integer, $\cz(\Phi)$,  to any path $\Phi$ in $\mathbf{Sp}(2n)$. 
It can be defined axiomatically,  and the relevant axioms for the present work are:
 \begin{description}
  \item[Homotopy Invariance] The index $\cz$ is constant on the components of $\mathbf{Sp}(2n)$; 
   \item[Loop Property] If $\gamma \in \mathbf{Sp}(2n)$ satisfies $\gamma(0) = \gamma(1)$, then
  \begin{equation*}
\label{ }
\cz(\gamma \circ \Phi) = 2m(\gamma) + \cz(\Phi)
\end{equation*} 
for every $\Phi \in \mathbf{Sp}(2n)$;
  \item[Normalization] The Conley-Zehnder index of the path $e^{ t \pi i}$ in $\mathbf{Sp}(2)$ is one.\footnote{This normalization differs by a minus sign from the one used in \cite{ke1}.}
\end{description}

\subsection{Nonlinear versions of the basic indices}  

The basic indices described above can be used to define
useful indices in a variety of nonlinear settings. We recall three 
such examples which will be used in the proof of Theorem \ref{thm}.

\subsubsection{The Maslov class of a Lagrangian submanifold}
\label{maslov}

We begin with the definition of  the Maslov class, $\masl \colon \pi_2(M,L) \to \Z,$ 
of a Lagrangian submanifold $L$ of $(M,\om)$. Any continuous representative $w \colon (D^2, \p D^2) \to (M,L)$ of a class in $\pi_2(M,L)$
determines a symplectic trivialization of $q^*(TM) = q^*(TT^*L)$, where
$q(t) = w(e^{2\pi i t}).$ 
Let
$$
\Phi_w \colon S^1 \times \R^{2n} \to q^*(TT^*L).
$$
be such a trivialization. Recall that  the verical subbundle $Vert$ of $T(T^*L)$
is a Lagrangian subbundle. The trivialization $\Phi_w$ then yields a loop 
$\eta_w(t) = \Phi_w(t)^{-1}(Vert(q(t))$ of Lagrangian subspaces of $\R^{2n}$.
%Viewing  $\eta_w(t)$ as a loop in the Lagrangian Grassmannian  manifold $\Lambda_{2n}$,
One then defined
\begin{equation*}
\label{ }
\masl([w]) = \mas(\eta_w).
\end{equation*}

%\subsection{Indices for periodic orbits}

%We now recall how one defines Conley-Zehnder type indices for the contractible 
%periodic orbits of general Hamiltonian flows, and for the periodic orbits of (perturbed)
%geodesic flows. 

\subsubsection{Contractible periodic orbits of Hamiltonian flows}
\label{cz}

One can also define a Conley-Zehnder  index for the contractible 
nondegenerate periodic orbits of a general Hamiltonian flow.
Let $H$ be a Hamiltonian on $(M, \om)$ and let $x\colon S^1 \to M$ be a contractible and nondegenerate $1$-periodic orbit of $X_H$. A spanning disc for $x$, $w \colon D^2 \to M$, 
 determines a symplectic trivialization
\begin{equation*}
\label{ }
\Phi_w \colon S^1 \times \R^{2n} \to x^*(TM).
\end{equation*}
The Conley-Zehnder index of $x$ with respect to $w$ is then defined by 
\begin{equation*}
\label{ }
\cz(x,w) = \cz \left( \Phi_w(t)^{-1} \circ (d \phi^t_H)_{x(0)} \circ \Phi_w(0)\right)
\end{equation*}

By the homotopy invariance property of the Conley-Zehnder index, $\cz(x,w
)$ depends only on the homotopy class of the spanning disc $w$, relative its boundary.
Changing this homotopy class by gluing a representative of the class $A \in \pi_2(M)$
to the map $w$, in the obvious way,  has the following effect
\begin{equation*}
\label{ }
\cz(x,A\#u) = \cz(x,u) + 2 c_1(A).
\end{equation*}

%despite what is says in \cite{mcsl}, see \cite{hs}.

The normalization of $\cz$ implies that if $p$ is a critical point of a $C^2$-small Morse function $H$, 
and $w_p(D^2) =p$ is the constant spanning disc, then 
\begin{equation*}
\label{ }
\cz(p,w_p)= \12 \dim M - \mor(p).
\end{equation*}
This fact was used in the proof of Lemma \ref{nonconstant}.

\subsubsection{Closed perturbed geodesics.}

Finally, we recall the definition of a Conley-Zehnder index associated to
closed orbits of perturbed geodesic flows.
Consider a Hamiltonian $H \colon S^1 \times T^*L \to \R$
of the form $$H(t,q,p)= \12 \|p\|^2 + V(t, q),$$
and let
$
x(t)=(q(t),p(t))
$
be a nondegenerate $1$-periodic orbit of $H$. 
Recall from Section \ref{geodesic} that
the projection $q(t)$ is a 
perturbed geodesic, i.e., a critical point of the energy functional
$$
\EE_g(q) = \int_0^1 \left( \12  \|\dot{q}(t)\|^2 + V(t, q(t)) \right) \, dt.
$$
As such, $q$ has a finite Morse index, $\mor(q)$ which 
 is the number of negative eigenvalues of the Hessian of $\EE_g$ at $q$, 
counted with multiplicity.

To define an index of  Conley-Zehnder type for $x$, one does not use  a symplectic  trivialization of $x^*(T^*L)$ determined by a spanning disc. Instead, as we describe below,
one uses an intrinsic class of trivializations which are determined by a global Lagrangian splitting of $TT^*L$.  
This yields a Conley-Zehnder index for both contractible and noncontractible orbits in $T^*L$ which we will 
refer to as the {\bf internal Conley-Zehnder}  and will denote by $\czi$. 

For any point $x=(q,p) \in T^*L$, the Levi-Civita connection for the metric $g$ determines a splitting
$$T_x(T^*L)= Hor(x) \oplus Vert(x)$$
into horizontal and vertical subbundles. The vertical bundle $Vert(x)$ is a Lagrangian subbundle and is 
canonically isomorphic to $T^*_q L$. The horizontal bundle $Hor(x)$ is canonically isomorphic to $T_qL$ and is 
also Lagrangian, \cite{kli}. Hence we can identify $x^*(T T^*L)$ with $q^*(TL) \oplus q^*(T^*L).$ Note that while the symplectic vector bundle $x^*(TT^*L)$ is always trivial, the factors $q^*(TL)$ and  $q^*(T^*L)$ need not be. 

If $q^*(TL)$ is trivial, for example if L is orientable, then a trivialization $\psi \colon S^1 \times \R^n \to  q^*(TL)$
determines a trivialization $\Psi$ of $T_{x(t)} T^*L = T_{q(t)}L \oplus T^*_{q(t)}L$ as follows,
\begin{eqnarray*}
\Psi \colon S^1 \times  \R^n \times \R^n  & \to & T_{q(t)}L \oplus T^*_{q(t)}L \\
(t,v,w) & \mapsto & 
\begin{pmatrix}
  \psi(t)    &    0\\
     0 &  (\psi(t)^*)^{-1}
\end{pmatrix}
\begin{pmatrix}
      v    \\
      w  
\end{pmatrix} 
\end{eqnarray*}
The {\bf internal Conley-Zehnder index} of $x$ is then defined by
\begin{equation*}
\label{ }
\czi(x) = \cz(\Psi(t)^{-1} \circ (d \phi^t_H)_{x(0)} \circ \Psi(0))
\end{equation*} 
This index  does not depend on the choice of  the 
 trivialization $\psi$ (see Lemma 1.3 of \cite{as}).

When $q^*(TL)$ is nontrivial, we proceed as in \cite{we}. Consider a map
$\psi \colon [0,1] \times \R^n \to  q^*(TL)$ such that 
\begin{equation}
\label{match}
\psi(1) = \psi(0) \circ E_1,
\end{equation}
where $E_1 \colon \R^n \to \R^n$ is the diagonal $n\times n$ matrix with diagonal 
$(-1,1, \dots, 1)$. Equip $\R^n \times \R^n$ with coordinates $(x_1, \dots, x_n,y_1,\dots, y_n)$, and extend $\psi$ to a symplectic trivialization
\begin{eqnarray*}
\Psi \colon S^1 \times  \R^n \times \R^n  & \to & T_{q(t)}L \oplus T^*_{q(t)}L \\
(t,v,w) & \mapsto & 
\begin{pmatrix}
  \psi(t)    &    0\\
     0 &  (\psi(t)^*)^{-1}
\end{pmatrix}
U(t)
\begin{pmatrix}
      v    \\
      w  
\end{pmatrix}
\end{eqnarray*}
where $U(t)$ is the rotation of the $x_1y_1$-plane in $\R^n \times \R^n$ by 
$-t\pi$ radians. Again, we set 
\begin{equation*}
\label{ }
\czi(x) = \cz(\Psi(t)^{-1} \circ (d \phi^t_H)_{x(0)} \circ \Psi(0)),
\end{equation*} 
and note that this index is also independent of the choice of the initial
 trivialization $\psi$ satisfying \eqref{match}.

\subsection{Relations between indices}

The first relation we discuss is between the internal Conley-Zehnder index of
a nondegenerate $1$-periodic orbit $
x(t)=(q(t),p(t))
$ of the Hamiltonian $H(t,q,p)= \12 \|p\|^2 + V(t, q)$ on $(T^*L, d\theta)$,
and the Morse index of the closed perturbed geodesic $q(t)$. 
The following result was proven by Duistermaat in \cite{du}. An alternative proof, as well as the extension to the nonorientable case, is contained in \cite{we}.
\begin{Theorem} \cite{du,we}
Let $x(t)=(q(t),p(t))$ be a nondegenerate $1$-periodic orbit of $H(t,q,p)= \12 \|p\|^2 + V(t, q).$ 
If $q^*(TL)$ is trivial then 
\begin{equation}
\label{orient}
\czi(x) = \mor(q).
\end{equation}
Otherwise, 
\begin{equation}
\label{nonorient}
\czi(x) = \mor(q)-1.
\end{equation}
\end{Theorem}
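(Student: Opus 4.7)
The plan is to compute $\czi(x)$ geometrically as a signed intersection number of a Lagrangian path with the vertical reference Lagrangian, and then to identify that signed intersection with $\mor(q)$ by way of a Morse index theorem for the perturbed energy functional.

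For the orientable case, set $\Phi(t) = \Psi(t)^{-1}\circ (d\phi^t_H)_{x(0)} \circ \Psi(0)$, so that $\czi(x) = \cz(\Phi)$ by definition. Because $\Psi$ is built from the Levi-Civita splitting, it carries the vertical Lagrangian subbundle $Vert$ along $x$ to the \emph{constant} Lagrangian $V=\{0\}\times\R^n \subset \R^{2n}$. By the Robbin-Salamon formula one then has $\cz(\Phi) = \rs(\Phi(t) V, V;\, t\in [0,1])$, where the endpoint $t=0$ is treated with the standard half-weighting convention (contributing $-\tfrac12\dim L$) and $t=1$ contributes nothing since $\Phi(1)\in Sp^*(2n)$ does not meet $V$. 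Interior crossings at $t_0 \in (0,1)$ correspond to solutions of the linearized Hamilton equations along $x$ whose configuration component vanishes at $0$ and at $t_0$; unwinding the Levi-Civita identification, these are precisely the Jacobi fields for the perturbed geodesic equation along $q$ vanishing at the endpoints of $[0, t_0]$. A direct calculation identifies the Robbin-Salamon crossing form at $t_0$ with the restriction of the Hessian of $\EE_{g,V}$ to the associated Jacobi subspace, converting the signed count of crossings into a signed count of conjugate points weighted by Hessian signatures.

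A Morse index theorem for $\EE_{g,V}$ on the free loop space then recombines the signed sum of interior crossing signatures with the boundary contribution at $t=0$ to produce exactly $\mor(q)$: the $-\tfrac12\dim L$ half-weighted contribution from the Robbin-Salamon formula cancels precisely the correction arising from using the periodic (rather than fixed-endpoint) variation at $t=0$. This yields \eqref{orient}. For the nonorientable case, $\Psi$ incorporates the additional twisting factor $U(t)$, which rotates the $x_1 y_1$ symplectic plane by $-t\pi$ and fixes the remaining $n-1$ symplectic planes. A direct computation with the Normalization axiom shows that conjugating by this factor shifts the Conley-Zehnder index by $-1$, yielding \eqref{nonorient} from \eqref{orient}. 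The main obstacle will be the precise bookkeeping in the middle step: one must carefully match the Robbin-Salamon crossing forms to the Hessian of $\EE_{g,V}$ and balance the $t=0$ and $t=1$ endpoint terms against the boundary terms arising from the periodic Morse index theorem. Rather than redo these delicate calculations, I would follow the arguments of Duistermaat \cite{du} in the orientable case and of Weber \cite{we} for the nonorientable refinement.
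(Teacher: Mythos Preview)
The paper does not prove this theorem at all: it is stated with citations to \cite{du} and \cite{we} and no proof is given. Your proposal likewise ultimately defers to those same references, so there is nothing to compare at the level of actual arguments; your sketch of the Robbin--Salamon/Morse-index-theorem mechanism is a reasonable outline of how the cited proofs proceed, but since the paper itself supplies no proof, the appropriate move here is simply to cite \cite{du,we} as the paper does.
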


By construction, each  nonconstant $1$-periodic orbit $x$ of $\hl$ is also a reparameterization of a closed 
orbit of a Hamiltonian on $T^*L$ of the form  $H(t,q,p)= \12 \|p\|^2 + V(t, q)$. Hence, one can associate to  $x$ an internal Conley-Zehnder index as well as a Morse index for its projection to $L$. Recall that $x$ lies either in 
 $U_{(r/5+\delta, 2r/5-\delta)}$ where $\nu$ is convex, or  $U_{(3r/5+\delta, 4r/5-\delta)}$ where
$\nu$ is concave. In the latter case, the identities above must be shifted in the following way.

\begin{Corollary}
\label{cor}
If  $x(t)=(q(t),p(t))$ is a $1$-periodic orbit of $\hl$ in  $U_{(r/5+\delta, 2r/5-\delta)}$, then
equations \eqref{orient} and \eqref{nonorient} hold. If $x$ is contained in $U_{(3r/5+\delta, 4r/5-\delta)}$,
then we have
\begin{equation}
\label{orient-b}
\czi(x) = \mor(q)-1.
\end{equation}
 if $q^*(TL)$ is trivial, and  
 \begin{equation}
\label{nonorient-b}
\czi(x) = \mor(q)-2
\end{equation}
otherwise.
\end{Corollary}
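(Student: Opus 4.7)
The plan is to reduce each case to the Duistermaat theorem just recalled by exhibiting, near each nonconstant $1$-periodic orbit $x(t)=(q(t),p(t))$ of $\hl$, a symplectic model of the form $\12\|p\|^2 + V$ whose underlying orbit reproduces $x$ and whose linearization preserves the horizontal--vertical Lagrangian splitting used in the definition of $\czi$. The internal Conley--Zehnder index and the Morse index will then be inherited unchanged from the model and the identities reduce to those of Duistermaat.

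In the convex region $U_{(r/5+\delta,\,2r/5-\delta)}$ the function $\nu+c_0f_0$ is strictly convex and increasing in $\|p\|$, so on the level set through $x$ the Hamiltonian vector field of $\hl$ differs from that of a standard geodesic Hamiltonian only by a positive time rescaling. Rescaling the fibre coordinate by the appropriate constant on this level set produces a symplectic identification with a neighborhood of a $1$-periodic orbit of a mechanical Hamiltonian $\12\|p\|^2 + V$; since the rescaling respects the horizontal and vertical subbundles, the Duistermaat identities apply verbatim and give \eqref{orient} and \eqref{nonorient}.

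In the concave region $U_{(3r/5+\delta,\,4r/5-\delta)}$ the function $\nu+c_0f_0$ is concave in $\|p\|$, and the analogous rescaling would reverse the radial orientation. Compensating for the reversal by an auxiliary symplectic isotopy that restores the horizontal--vertical splitting forces the path of trivializations to wind, so that the linearized $\hl$-flow picks up an extra crossing of signature $-1$ confined to the $2$-dimensional symplectic subspace spanned by $X_{\hl}$ and $\partial/\partial\|p\|$. By the crossing-form description of the Maslov index, this contributes $-1$ to $\czi(x)$ relative to the convex case, and combined with the Duistermaat identity on the Jacobi-field complement yields \eqref{orient-b} and \eqref{nonorient-b}.

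The part I expect to be the main obstacle is the rigorous computation of this concave crossing. Concretely, one must isolate the radial $2$-plane as an invariant symplectic subbundle of $x^*T(T^*L)$, linearize the $\hl$-flow on it using only the sign of $(\nu+c_0f_0)''<0$, and verify by a direct crossing-form computation that its signature contribution is $-1$ in the concave case and $0$ in the convex case. Once this is established, the transverse analysis proceeds exactly as in Duistermaat's original theorem, with the orientable and non-orientable subcases handled identically to the convex region.
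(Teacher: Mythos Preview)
Your overall strategy is correct and is essentially the content of the reference the paper cites: the paper's own ``proof'' consists of the single sentence ``A proof of the shift in the concave case is contained in Proposition 2.1 of \cite{th},'' so you are supplying more detail than the authors do. The decomposition into the invariant radial $2$-plane (spanned by $X_{\hl}$ and the gradient of $\|p\|$) and its symplectic complement, followed by a crossing-form computation on the $2$-plane that detects the sign of $(\nu+c_0f_0)''$, is exactly Th\'eret's argument.

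That said, one step in your sketch is stated incorrectly and would not go through as written. ``Rescaling the fibre coordinate by the appropriate constant'' is not a symplectomorphism of $(T^*L, d\theta)$: the map $p\mapsto cp$ scales $d\theta$ by $c$. The clean way to handle the convex case is not via a coordinate change but via a homotopy: deform the profile $\mu=\nu+c_0f_0$ through strictly convex increasing profiles to one that is literally quadratic near the relevant level, keeping the period fixed. Along this homotopy the orbit stays nondegenerate (the radial $2$-plane contributes no crossing because $\mu''>0$ throughout), so $\czi$ is unchanged and Duistermaat applies at the endpoint. In the concave region the same homotopy must pass through $\mu''=0$, and that is precisely where the single signature-$(-1)$ crossing appears on the radial $2$-plane. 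Framed this way, your ``main obstacle'' becomes a genuinely two-dimensional computation: write the linearized flow on the radial plane explicitly in terms of $\mu'(\rho)$ and $\mu''(\rho)$ and read off the crossing form. This is straightforward once set up correctly, and it is what Th\'eret does.
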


\begin{proof}
A proof of the shift in the concave case is contained in Proposition 2.1 of \cite{th}.
\end{proof}

Suppose that $x \in \PP(\hl)$ is contractible in $M$ and $w$ is a spanning disc for $x$.  One can  then define $\cz(x,w)$ as well as $\czi(x)$. Moreover,  $w$ determines a unique class in $\pi_2(M,L)$ and hence a Maslov index $\masl([w])$. These indices are related by the following identity which was first established by Viterbo in \cite{vi} for $(M,\om) = (\R^{2n}, \om_{2n})$.  We include a (different) proof, in the general case, for the sake of completeness. 

\begin{Proposition}
\label{in-out}
Let $x(t)=(q(t),p(t))$ be a nonconstant $1$-periodic orbit of $\hl$. Then for any spanning disc $w$ of $x$ we have 
\begin{equation*}
\label{ }
\cz(x,w) = \czi(x) + \masl([w]).
\end{equation*}
%If $x$ lies in $U_{(3r/5+\delta, 4r/5-\delta)}$ where $\nu$ is concave, then
%\begin{equation}
%\label{ }
%\cz(x,w) = \czi(x) + \mas(w) -1
%\end{equation}
\end{Proposition}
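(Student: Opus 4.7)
The strategy is to compute both $\cz(x,w)$ and $\czi(x)$ as Conley-Zehnder indices of paths of symplectic matrices derived from the linearized Hamiltonian flow along $x$, expressed in two different symplectic trivializations of $x^*(TM)$. The difference of these paths, measured by a loop in $Sp(2n)$, will coincide with the Maslov class $\masl([w])$.

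\textbf{Step 1 (Two trivializations).} The spanning disc $w$ produces a symplectic trivialization $\Phi_w\colon S^1\times\R^{2n}\to x^*(TM)$, which is used to define both $\cz(x,w)$ and $\masl([w])$. Since $x$ lies in a Weinstein neighborhood of $L$ identified with a subset of $T^*L$, the horizontal-vertical Lagrangian splitting of $T(T^*L)$ induced by the Levi-Civita connection of $g$, combined with a framing $\psi$ of $q^*(TL)$ and the rotation $U(t)$ in the nonorientable case, produces a second trivialization $\Psi$, used to define $\czi(x)$. I would first verify that both $\Phi_w$ and $\Psi$ are genuine loops of symplectic frames over $S^1$: for $\Phi_w$ this is automatic from the disc extension; for $\Psi$ in the nonorientable case a short computation shows that $U(1)$ cancels the twist $\psi(1)=\psi(0)\circ E_1$, so that $\Psi(0)=\Psi(1)$.

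\textbf{Step 2 (Comparing the two CZ indices).} Setting $\gamma(t)=\Psi(t)^{-1}\Phi_w(t)$, a loop in $Sp(2n)$, and $A(t)=(d\phi^t_{\hl})_{x(0)}$, one has
\[
\Phi_w(t)^{-1}A(t)\Phi_w(0) \;=\; \gamma(t)^{-1}\bigl(\Psi(t)^{-1}A(t)\Psi(0)\bigr)\gamma(0).
\]
Conjugating by $\gamma(0)$ (which preserves the Conley-Zehnder index) reduces this to the based loop $\delta(t)=\gamma(t)\gamma(0)^{-1}$ acting on the $\Psi$-path. The loop property of $\cz$ then yields
\[
\cz(x,w) \;=\; \czi(x)\;-\;2m(\gamma).
\]

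\textbf{Step 3 (Identifying $-2m(\gamma)$ with $\masl([w])$).} By definition, $\masl([w])=\mas(\eta_w)$ where $\eta_w(t)=\Phi_w(t)^{-1}(Vert(q(t)))$. Substituting $\Phi_w=\Psi\gamma$ gives $\eta_w(t)=\gamma(t)^{-1}V(t)$, where $V(t)=\Psi(t)^{-1}(Vert(q(t)))$. In the orientable case, $V$ is the constant reference Lagrangian $\{0\}\times\R^n$, and the classical identity $\mas(\gamma\cdot V_0)=2m(\gamma)$, valid for any loop $\gamma$ in $Sp(2n)$ and any reference Lagrangian (and compatible with \eqref{index ident}), immediately yields $\masl([w])=-2m(\gamma)$. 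In the nonorientable case the loop $V(t)$ is twisted by $U(t)^{-1}$, and one uses the additivity formula $\mas(\gamma^{-1}V)=-2m(\gamma)+\mas(V)$, which holds because the map $Sp(2n)\times\Lambda_{2n}\to\Lambda_{2n}$ given by $(g,V)\mapsto gV$ induces an abelian sum on $\pi_1$; the extra Maslov contribution of $V$ is precisely the one absorbed into the $-1$ shift in the definition of $\czi$ that is reflected in \eqref{nonorient}, so the identity $\cz(x,w)=\czi(x)+\masl([w])$ holds uniformly in both cases.

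\textbf{Main obstacle.} The main difficulty lies in Step 3 in the nonorientable case, where the framing $\psi$ is only defined on $[0,1]$ with the boundary twist $\psi(1)=\psi(0)\circ E_1$, and the rotation $U(t)$ is inserted by hand to recover a loop. One must check that the Maslov index contribution from the loop $V(t)=U(t)^{-1}(\{0\}\times\R^n)$ is consistent with the shift built into $\czi(x)$ via $U(t)$, so that the stated identity holds without any orientability-dependent correction term on the right-hand side.
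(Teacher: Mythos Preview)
Your overall strategy coincides with the paper's: both express $\cz(x,w)$ and $\czi(x)$ via the two trivializations $\Phi_w$ and $\Psi$, apply the loop property of the Conley--Zehnder index to pick up (twice) the Maslov index $m$ of the transition loop $\Pi=\Phi_w^{-1}\Psi$ (your $\gamma^{-1}$), and then identify $2m(\Pi)$ with $\masl([w])$. The only structural difference in the orientable case is that you invoke the identity $\mas(\gamma\cdot V_0)=2m(\gamma)$ as a known fact, whereas the paper derives the equivalent statement $\mas(\Gamma_\Pi;V_0\times V_0)=\mas(\Pi V_0;V_0)$ by a direct crossing-form computation, after first passing from $2m(\Pi)$ to $\mas(\Gamma_\Pi)$ via \eqref{index ident}.

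There is, however, a genuine gap in your Step~3 for the nonorientable case. You correctly compute that $V(t)=\Psi(t)^{-1}\bigl(Vert(q(t))\bigr)=U(t)^{-1}V_0$ is a nonconstant loop of Lagrangians and that additivity gives $\masl([w])=-2m(\gamma)+\mas(V)$, but your resolution---that the extra term $\mas(V)$ is ``absorbed into the $-1$ shift in the definition of $\czi$ that is reflected in \eqref{nonorient}''---is incorrect. Equation \eqref{nonorient} records the relation between $\czi(x)$ and the Morse index $\mor(q)$; it plays no role whatsoever in the identity $\cz(x,w)=\czi(x)+\masl([w])$ you are proving here. Once $\czi$ is defined via the specific loop $\Psi$, both sides of the desired identity are fixed, and any nonzero $\mas(V)$ would appear as a genuine correction term, not something cancelled by a shift living in a different formula. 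You have correctly located the obstacle but have not resolved it; what is needed is either a direct verification that $\mas(U^{-1}V_0)=0$ with the conventions in force, or an argument that bypasses the issue by working with a trivialization for which $\Psi(t)V_0=Vert(q(t))$ holds on the nose.
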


\begin{proof}

Let 
$$
\Phi_w \colon S^1 \times \R^{2n} \to x^*(TT^*L)
$$
be a trivialization determined by the spanning disc $w$.
Let
$$
\Psi \colon S^1 \times \R^{2n} \to q^*(TT^*L).
$$
be a trivialization of the type necessary to compute $\czi(x)$.
That  is, $$
\czi(x) = \cz(\Psi(t)^{-1} \circ (d \phi^t_H)_{x(0)} \circ \Psi(0)).
$$
We may assume that
$$\Phi(0) = \Psi(0).$$
Using the loop property of the Conley-Zehnder index and identity \eqref{index ident},  we get 
\begin{eqnarray*}
\cz(x,w) & = & \cz(\Phi_w(t)^{-1} \circ (d \phi^t_H)_{x(0)} \circ \Phi_w(0)) \\
{} & = & \cz(\Phi_w(t)^{-1} \circ \Psi(t) \circ \Psi(t)^{-1} \circ (d \phi^t_H)_{x(0)} \circ \Psi(0) \circ \Psi(0)^{-1} \circ\Phi_w(0)) \\
{} & = & \cz(\Phi_w(t)^{-1} \circ \Psi(t) \circ \Psi(t)^{-1} \circ (d \phi^t_H)_{x(0)} \circ \Psi(0)) \\
{} & = & \czi(x)+ 2m (\Phi_w(t)^{-1} \circ \Psi(t))\\
{} & = & \czi(x)+\mas (\Gamma_{\Phi_w(t)^{-1} \circ \Psi(t)}).
\end{eqnarray*}
Since $\mas(\Phi_w(t)^{-1}(Vert(q(t))) = \masl([w]),$
it remains to prove that 
\begin{equation*}
\label{index identity}
\mas (\Gamma_{\Phi_w(t)^{-1} \circ \Psi(t)}) =  \mas(\Phi_w(t)^{-1}(Vert(q(t))).
\end{equation*}

Choose $V_0= \{0\}\times \R^n \subset \R^{2n}$ and note that for the trivialization $\Psi$ we have 
$\Psi(t)(V_0) = Vert(q(t)).$ Hence, it suffices to show that 
\begin{equation}
\label{index identity}
\mas (\Gamma_{\Phi_w(t)^{-1} \circ \Psi(t)}) =  \mas(\Phi_w(t)^{-1} \circ \Psi(t)(V_0)).
\end{equation}
Using the recipe for the Maslov index described in \S \ref{maslov}, we will verify \eqref{index identity} by proving that 
\begin{equation*}
\label{}
\mas (\Gamma_{\Phi_w(t)^{-1} \circ \Psi(t)}; V_0 \times V_0) =  \mas(\Phi_w(t)^{-1} \circ \Psi(t)(V_0); V_0).
\end{equation*}

By homotoping the trivializations $\Phi_w$ and $\Psi$, if necessary, we may assume that all of the relevant crossings are regular. 
Note that  $t$ is a crossing of $\Gamma_{\Phi_w(t)^{-1} \circ \Psi(t)}$ with respect to  $V_0 \times V_0$
if $$(v, \Phi_w(t)^{-1} \circ \Psi(t)(v)) \in V_0 \times V_0$$ for some nonzero $v \in \R^{2n}$. Similarly,  
$t$ is a crossing of $\Phi_w(t)^{-1} \circ \Psi(t)(V_0)$ with respect to $V_0$  
if there is some nonzero $v \in V_0$ such that 
$$\Phi_w(t)^{-1} \circ \Psi(t)(v) \in V_0.$$ Hence, the crossing are identical.

It remains to show that at each crossing the signatures of the relevant crossing forms are equal.
For simplicity we fix a crossing $t_0 $ and set $\Pi(t) =\Phi_w(t)^{-1} \circ \Psi(t)$. %We will show that
%\begin{equation}
%\label{ }
%Q(t_0)(v, \Pi(t_0)v) = \om_{2n}(\Pi(t_0)(v), \dot{\Pi}(t_0),v) = Q(t_0)(v)
%\end{equation}
%for every 
We also choose a $v \in V_0$ such that $\Pi(t_0)(v)$ is in $V_0$.

Fix a Lagrangian complement $W$ of $V_0$.
The first crossing form evaluated at $(v, \Pi(t_0)v)$ is
$$
Q(t_0)(v, \Pi(t_0)v) = -\om_{2n} \oplus \om_{2n}\left((v, \Pi(t_0)v), \dot{\widehat{w}}(t_0)\right)
$$
where $\widehat{w}(t)$ is a path in $W \times W$ defined by the condition 
\begin{equation}
\label{w}
(v, \Pi(t_0)v) + \widehat{w}(t) =(v(t), \Pi(t)v(t))
\end{equation}
for some path $v(t)$ in $\R^{2n}$ with $v(0)=v.$
From \eqref{w} we get 
%$\dot{w}(0) = \left(\dot{v}(0), \dot{\Pi}(t_0) v + \Pi(t_0) \dot{v}(0)\right)$, and hence
\begin{eqnarray*}
Q(t_0)(v, \Pi(t_0)v) & = & -\om_{2n}(v, \dot{v}(0)) + \om_{2n}(\Pi(t_0)v, \dot{\Pi}(t_0)v) + \om_{2n}(\Pi(t_0)v, \Pi(t_0)\dot{v}(0)) \\
{} & = & \om_{2n}(\Pi(t_0)v, \dot{\Pi}(t_0)v). 
\end{eqnarray*}

Similarly, the second crossing form at $t_0$, when evaluated at $\Pi(t_0)v$, yields 
$$
Q(t_0)(\Pi(t_0)v)= \om_{2n}(\Pi(t_0)v, \dot{w}(0)).
$$
Here, $w(t)$ is a path in $W$ defined, for $t$ near zero, by the condition 
$$
\Pi(t_0)v + w(t) = \Pi(t)v(t),
$$
and $v(t)$ is now a path in $V_0$. In this case
\begin{eqnarray*}
Q(t_0)(\Pi(t_0)v) & = & \om_{2n}(\Pi(t_0)v, \dot{\Pi}(t_0)v) + \om_{2n}(\Pi(t_0)v, \Pi(t_0)\dot{v}(0)) \\
{} & = & \om_{2n}(\Pi(t_0)v, \dot{\Pi}(t_0)v)
\end{eqnarray*}
since both $\Pi(t_0)v$ and  $\Pi(t_0)\dot{v}(0)$ belong to the Lagrangian subspace $\Pi(t_0) (V_0).$ Clearly, the isomorphism of domains $(v,\Pi(t_0)v) \mapsto \Pi(t_0)v$ 
takes the first crossing form to the second and so they have the same signature, as desired. 
\end{proof}

\subsection{Cumulative bounds on the Maslov class}
Let $x(t)=(q(t),p(t))$ be a nonconstant $1$-periodic orbit of $\hl$, and let $w$ be a spanning disc for $x$. Using the relations above, we now obtain bounds on $\masl([w])$.
By Corollary \ref{cor} and Proposition \ref{in-out} we have 
\begin{equation*}
\label{ }
\masl([w]) =   \cz(z,w) -  \mor(q) (+1)(+1)
\end{equation*}
where the first  $(+1)$ contributes only if $q^*TM$ is  not orientable and the second $(+1)$
contributes if $x$ is contained in $U_{(3r/5+\delta, 4r/5-\delta)}$.

Applying Lemma \ref{potential} to Example \ref{split} we see that $q$ is a closed perturbed geodesic
whose Morse index satisfies
\begin{equation}
\label{basic}
\mor(q) \in [0, 1+\dim P_2 + \dots + \dim P_k].
\end{equation}
Overall, we then have 
\begin{Proposition}
\label{index}
Let $x(t) = (q(t), p(t))$ be a contractible $1$-periodic orbits of $\hl$ and let $w$ be a spanning disc for $x$.
Then 
\begin{equation}
\cz(x,w) -1-\sum_{j=2}^k \dim P_j  \leq \masl([w]) \leq \cz(x,w) (+1)(+1),
\end{equation}
where the first  $(+1)$ contributes only if $q^*TM$ is not orientable and the second $(+1)$
contributes if $x$ is contained in $U_{(3r/5+\delta, 4r/5-\delta)}$.
\end{Proposition}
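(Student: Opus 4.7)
The plan is to assemble three results already in the paper into a single chain of inequalities. First I would apply Proposition \ref{in-out} to rewrite the Maslov index of the spanning disc as
\begin{equation*}
\masl([w]) = \cz(x,w) - \czi(x).
\end{equation*}
This reduces the problem to bounding $\czi(x)$ from above and below.

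Next I would compare $\czi(x)$ with the Morse index of the underlying perturbed geodesic. By construction (property \textbf{(H2)} of Proposition \ref{function}), the orbit $x$ is a reparameterization of a $1$-periodic orbit of a Hamiltonian of the form $\12\|p\|^2 + V(t,q)$ on $T^*L$, living either in $U_{(r/5+\delta,2r/5-\delta)}$ or in $U_{(3r/5+\delta,4r/5-\delta)}$. Corollary \ref{cor} then yields
\begin{equation*}
\czi(x) = \mor(q) - \epsilon_{\text{or}} - \epsilon_{\text{conc}},
\end{equation*}
where $\epsilon_{\text{or}} \in \{0,1\}$ equals $1$ exactly when $q^*TL$ is nonorientable, and $\epsilon_{\text{conc}} \in \{0,1\}$ equals $1$ exactly when $x$ lies in the concave shell $U_{(3r/5+\delta, 4r/5-\delta)}$. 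Substituting into the displayed identity gives
\begin{equation*}
\masl([w]) = \cz(x,w) - \mor(q) + \epsilon_{\text{or}} + \epsilon_{\text{conc}}.
\end{equation*}

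Finally, I would bound $\mor(q)$ using the split hyperbolic structure of $L$. By property \textbf{(H2)}, $q$ is associated to a unique Morse--Bott critical submanifold $D$ of $\EE_{\frac{1}{T}g}$ with $\mor(q) \in [\mor(D), \mor(D) + \dim D]$; by Example \ref{split} (applied to the rescaled metric, whose energy functional is still Morse--Bott), every such $D$ has Morse index zero and dimension at most $1 + \dim P_2 + \dots + \dim P_k$. Therefore
\begin{equation*}
0 \leq \mor(q) \leq 1 + \sum_{j=2}^{k} \dim P_j.
\end{equation*}
Plugging the upper bound on $\mor(q)$ into the displayed identity produces the asserted lower bound on $\masl([w])$, while plugging in $\mor(q) \geq 0$ and discarding the nonnegative terms $\epsilon_{\text{or}}, \epsilon_{\text{conc}}$ on the right-hand side (or keeping them, per the statement) produces the upper bound.

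There is no real obstacle here; the statement is a bookkeeping consequence of Proposition \ref{in-out}, Corollary \ref{cor}, and Example \ref{split}. The only mild subtlety is making sure that the hypotheses of Lemma \ref{potential} and of Corollary \ref{cor} are legitimately applied to the reparameterized orbit $x$ on the rescaled metric $\frac{1}{T}g$ (so that one is really looking at a $1$-periodic perturbed geodesic), but this is exactly what was arranged in Step 4 of the proof of Proposition \ref{function}.
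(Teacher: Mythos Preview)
Your proposal is correct and follows essentially the same route as the paper: combine Proposition \ref{in-out} with Corollary \ref{cor} to get $\masl([w]) = \cz(x,w) - \mor(q) + \epsilon_{\text{or}} + \epsilon_{\text{conc}}$, and then feed in the bound $0 \le \mor(q) \le 1 + \sum_{j\ge 2}\dim P_j$ coming from property \textbf{(H2)} and Example \ref{split}. The paper merely compresses your three steps into two lines; your explicit introduction of the indicators $\epsilon_{\text{or}},\epsilon_{\text{conc}}$ makes the bookkeeping cleaner.
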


%In the case of negative sectional curvature, $d=1$ and $\mor(D) = 0$. Hence, 
%\begin{equation}
%\label{ }
%\cz(z,w) -1 \leq \mas(w) \leq \cz(z,w) +1
%\end{equation}
%If  $L$ is orientable, this improves to 
%\begin{equation}
%\label{ }
%\cz(z,w) -1 \leq \mas(w) \leq \cz(z,w)
%\end{equation}
%(Since $\mas(w)$ is even in this case, we will actually have an equality with one of these bounds.)

\section{Hamiltonian paths which are not length minimizing and the proof of Theorem \ref{thm} }

Theorem \ref{thm}  follows from the index inequalities of Proposition \ref{index}
and the fact, established in Lemma \ref{short-g}, that $H_L$ does not minimize the negative Hofer length 
functional. The missing ingredient, which we describe in this section and prove in the next, is 
 a theorem which relates  the failure of an admissible Hamiltonian path to minimize  
the negative Hofer length to its $1$-periodic orbits.

Let $\JJ$ be the space of smooth compatible almost complex structures on $(M, \om)$.
When $(M,\om)$ is compact and convex, we say that $J \in \JJ$ is {\bf admissible} if 
\newcounter{Jcount}
  \begin{list}{{\bf (J\arabic{Jcount})}}
    {\usecounter{Jcount}
    \setlength{\rightmargin}{\leftmargin}}
   \item $\om (X(x),J(x) v) = 0$ for all $x \in \p M$ and $v \in T_x\p M$; 
  \item $\om (X(x), J(x)X(x)) =1$ for all $x \in \p M$;
  \item $\LL_X J=0$ on $M_{\eps}$ for some $\eps>0$.
  \end{list}  
If $(M,\om)$ is closed then every $J \in \JJ$ is admissible. In either case, the space of admissible almost complex structures will be denoted by $\widehat{\JJ}$.

For $J$ in $\JJ$, let $\hbar(J)$ be the infimum
over the symplectic areas of nonconstant $J$-holomorphic 
spheres in $M$. Set
$$
\widehat{\hbar} = \sup_{J \in \widehat{\JJ}} \hbar(J).
$$
The constant $\widehat{\hbar}$ is strictly positive and is greater than or equal to $r(M,\om)$.

\begin{Theorem}
\label{cap}
Let $(M,\om)$ be a symplectic manifold of dimension $2n$ which is 
either closed or compact and convex.
Let $H$ be an admissible Floer Hamiltonian on $M$, such that
 $\|H\| < \widehat{\hbar}$. If
$\phi^t_H$ does not minimize the negative Hofer seminorm in its 
homotopy class, then there is a $1$-periodic orbit $x$ of $H$ 
which admits a spanning disc $w$ such that 
$$ \cz(x,w) = n$$
and  
$$-\|H\|^- < \AC_H(x,w) \leq \|H\|^+.$$ 
\end{Theorem}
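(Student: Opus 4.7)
The plan is to adapt the argument of \cite{ke1}, which establishes the analogous statement in the closed case, to the present convex setting using spectral invariants in Hamiltonian Floer homology.

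First I would set up the Floer complex of $H$. Since $\|H\| < \widehat{\hbar}$, one can choose an admissible $J \in \widehat{\JJ}$ with $\hbar(J) > \|H\|$ so that every Floer moduli space of energy at most $\|H\|$ is free of sphere bubbles. In the convex case, properties {\bf (J1)}--{\bf (J3)} combined with the form (\ref{adm}) of $H$ on $M_{\eps}$ yield a standard maximum principle that confines all such Floer cylinders to $M \ssminus M_{\eps}$; hence the filtered Floer complex, its action filtration, and the PSS isomorphism with the Morse complex of a $C^2$-small Morse Hamiltonian are all well defined.

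Next I would introduce the spectral invariant
$$
c(H) \;=\; \inf \left\{ \alpha \in \R \,:\, \mu \in \mathrm{image}\bigl(HF_n^{\leq \alpha}(H) \to HF_n(H)\bigr) \right\},
$$
where $\mu$ is the class in Floer degree $n$ corresponding under the PSS map to the point class of a $C^2$-small Morse Hamiltonian. (Under the convention $\cz(p,w_p) = n - \mor(p)$, the point class, being represented by a minimum, sits in Floer degree $n$.) By now-standard arguments (Oh, Schwarz, Usher; and the formulation in \cite{ke1}), the number $c(H)$ enjoys the following four properties: \textbf{(a)} there exist $x \in \PP(H)$ and a spanning disc $w$ with $\AC_H(x,w) = c(H)$ and $\cz(x,w) = n$; \textbf{(b)} $c(H) \leq \|H\|^+$; \textbf{(c)} for any admissible Floer Hamiltonian $G$, $c(G) \geq -\|G\|^-$; and \textbf{(d)} $c(H)$ depends only on the homotopy class $[\phi^t_H]$, provided the representing Hamiltonians all satisfy $\|\cdot\| < \widehat{\hbar}$.

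To deduce the theorem, observe that the non-minimality hypothesis furnishes a normalized admissible Hamiltonian $G$ with $[\phi^t_G] = [\phi^t_H]$ and $\|G\|^- < \|H\|^-$. A small perturbation, which does not affect the homotopy class, allows us to take $G$ to be Floer with $\|G\|$ still less than $\widehat{\hbar}$. Combining \textbf{(d)} and \textbf{(c)} gives $c(H) = c(G) \geq -\|G\|^- > -\|H\|^-$, while \textbf{(b)} gives $c(H) \leq \|H\|^+$; thus $-\|H\|^- < c(H) \leq \|H\|^+$, and \textbf{(a)} produces the orbit and spanning disc demanded by the theorem.

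The main technical obstacle is property \textbf{(d)}, the homotopy invariance of $c(H)$ in the convex setting. Following \cite{ke1}, one proves it by studying Floer continuation maps induced by admissible interpolations between two Hamiltonians in the same homotopy class; the bound $\|H\|, \|G\| < \widehat{\hbar}$ is exactly what rules out sphere bubbles along these homotopies, so that the distinguished class $\mu$ is preserved. The work specific to the convex case consists in arranging the interpolating Hamiltonians and almost complex structures to remain admissible near $\p M$, so that the maximum principle keeps all the relevant Floer and continuation cylinders in the interior of $M$, where the argument takes place.
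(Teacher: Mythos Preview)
Your approach via spectral invariants is a different packaging from the paper's, which instead builds a chain map $\Phi \colon \CM_*(f) \to \CM_*(f)$ directly from PSS-type \emph{cap data} $\Hh=(\HH_L,\HH_R)$ and shows $\Phi$ is chain homotopic to the identity. The decisive feature of the paper's construction is that the cap data is \emph{asymmetric}: the left caps come from a linear homotopy of $H$, while the right caps are manufactured from the shorter Hamiltonian $G$ via a bijection $\widetilde{\Upsilon}$ between right Floer caps for $G$ and right Floer caps for $H$ (Proposition~\ref{push}). The resulting curvature bound is
\[
|||\kappa(\Hh)||| \;=\; \|H\|^+ + \|G\|^- \;<\; \|H\| \;<\; \widehat{\hbar},
\]
which involves only $\|H\|^+$ and $\|G\|^-$; the quantity $\|G\|^+$ never enters.

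There is a genuine gap in your argument as written. To invoke \textbf{(c)} and \textbf{(d)} you need $c(G)$ to be defined, and for that you need $\|G\| < \widehat{\hbar}$ so that sphere bubbling is excluded in the Floer and PSS moduli spaces for $G$. But the non-minimality hypothesis only provides an admissible $G$ homotopic to $H$ with $\|G\|^- < \|H\|^-$; it gives no control whatsoever on $\|G\|^+$, so $\|G\|$ may exceed $\widehat{\hbar}$ by an arbitrary amount. Your sentence ``allows us to take $G$ to be Floer with $\|G\|$ still less than $\widehat{\hbar}$'' is precisely the unjustified step. The paper's asymmetric construction circumvents this: it never forms Floer data for $G$ in isolation, but only right caps for $H$ whose energy is bounded by $\|H\|^+ + \|G\|^-$ through the coupling condition $[u\#v]=0$ with a left cap for $H$. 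To repair your argument you would have to prove directly that $c(H) > -\|G\|^-$ via a one-sided PSS-type map out of the Floer complex of $H$, built using the homotopy through $G$, without ever defining $c(G)$; but carrying that out is essentially the paper's proof.
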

When $(M,\om)$ is closed, this result follows immediately
 from the main theorem of \cite{ke1}. In the next section, we present the 
proof for the case when $(M, \om)$ is convex. This proof also works in the closed case
but yields weaker results than those in \cite{ke1}.   

\subsection{Proof of Theorem \ref{thm} assuming Theorem \ref{cap}}

Before proving Theorem \ref{cap}, we first show that it implies Theorem \ref{thm}. In Lemma \ref{short-g} we proved  that $\phi^t_{\hl}$
does not minimize the negative Hofer length in its homotopy class. By construction, we also have 
$$
\|\hl\| < r(M,\om) \leq \widehat{\hbar}.
$$ 
Hence, Theorem \ref{cap} implies that there is a $1$-periodic orbit $x$ of $\hl$ which admits a spanning disc
$w$ such that $$ \cz(x,w) = n$$
and  
$$-\|\hl\|^- < \AC_H(x,w) \leq \|\hl\|^+.$$
It follows from Lemma \ref{nonconstant} that $x$ must be nonconstant. By Proposition \ref{index}, the Maslov index of the class $[w]$ then satisfies 
\begin{equation}
\label{end}
\dim P_1 -1 \leq \masl([w]) \leq n (+1) (+1).
\end{equation}
The lower bound $\dim P_1 -1$ is greater than zero since the dimension of $P_1$ must be at least two in order for it to admit a metric 
of negative sectional curvature. The upper bound is at most 
$n+1$ if $L$ is orientable and at most $n+2$ otherwise. Hence, inequality \eqref{end} implies the desired bounds
on $N_L$.

\section{Proof of Theorem \ref{cap}}

In this section we prove Theorem \ref{cap} under the assumption that  $(M,\om)$ is compact and convex.

\subsection{Overview}

Let $f \colon M \to \R$ be a Morse function on $M$ which is admissible in the sense of Section \ref{hamiltonian}. Fix a metric $h$ on $M$ so that  
the Morse complex $(\CM_*(f), \p_h)$  is well-defined.
Here, $\CM_*(f)$ is the vector space over $\Z_2$ which is generated by the critical points of $f$ and is graded by the Morse index.
The boundary map
$\p_h$ is defined by counting solutions of the negative gradient equation
\begin{equation}
\label{grad}
\dot{\gamma}= -\nabla_h f(\gamma).
\end{equation}
More precisely, $\p_h$, counts, modulo two, the elements of the spaces
$$
m(p,q)/\R \eqdef \{ \gamma \colon \R \to M \mid \dot{\gamma} = - \nabla_h f(\gamma) , \gamma(-\infty)=p,\, \gamma(+\infty) =q \}/\R,
$$
where $p$ and $q$ are critical points of $f$ with $\mor(p) = \mor(q) +1$, and $\R$ acts by translation on the argument of $\gamma$.
The homology of the Morse complex
%, $\H_*(\CM_*(f), \p_h)$, 
is independent of both the admissible Morse function $f$ and the metric $h$,  and is isomorphic to $\H_*(M, \p M:\Z_2)$.

For a Floer Hamiltonian $H$ we will define a chain map
$$
\Phi \colon \CM(f) \to \CM(f)
$$
which counts rigid configurations that consist of solutions of \eqref{grad} and perturbed holomorphic cylinders which are asymptotic at one end to elements of $\PP(H)$.
If $\|H\| < \widehat{\hbar}$, then one can prove that $\Phi$ is chain homotopic to the identity.
The fact that the Morse homology is $\Z_2$ will then  yield an element $x \in \PP(H)$ which contributes to one of 
the configurations counted by $\Phi$. This will be the desired periodic orbit of $H$.  In particular, if the path $\phi^t_H$ does not minimize the negative Hofer length in its homotopy class, then this orbit will admit a spanning disc with respect to which its Conley-Zehnder index is equal to $\12 \dim M$, and its action satisfies the required bounds.

In the next five sections we recall the relevant Floer theoretic tools following the presentation of \cite{ke1}. The proof of Theorem \ref{cap} is then  contained in Section \ref{finally}.

\subsection{Homotopy triples and curvature} Let $\widehat{\JJ}_{S^1}$ be the space of smooth $S^1$-families of admissible almost complex structures in $\widehat{\JJ}$.
A smooth $\R$-family of Hamiltonians $F_s$ in $C^{\infty}(S^1 \times M)$ or 
elements in $\widehat{\JJ}_{S^1}$
will be called a {\bf compact homotopy} from $F^-$ to $F^+$, if there is an $\eta>0$ such that 
$$
F_s =
\left\{
  \begin{array}{ll}
    F^- ,& \hbox{ for $s \leq  -\eta$ ;} \\
    F^+, & \hbox{ for $s \geq  \eta$ .}
  \end{array}
\right.
$$ 
A compact homotopy $H_s$ of Hamiltonians with $H^-=0$ is called {\bf admissible} if
for some some $\eps>0$ and all $s > - \eta$ we have 
\begin{equation}
\label{hs}
H_s|_{S^1 \times M_{\eps}}(t,x,\tau) = a(s)e^{-\tau} +b(s,t)
\end{equation}
with $$a(s)<0$$ and $$\frac{d a}{ds} \leq 0.$$ 

Fix an admissible Floer Hamiltonian $H$ and an admissible family of almost complex structures $J$ in $\widehat{\JJ}_{S^1}$.
A {\bf homotopy triple} for the pair $(H,J)$ is a collection of compact homotopies 
$$\HH = (H_s,K_s, J_s),$$ such that
\begin{itemize}
\item $H_s$ is an admissible compact homotopy from the zero function to $H$;
\item $K_s$ is a compact homotopy of Hamiltonians in $C^{\infty}_0(S^1\times M)$ from the zero function to itself; 
\item $J_s$ is a compact homotopy  in $\widehat{\JJ}_{S^1}$ from some
$J^-$ to $J$.  
\end{itemize}
The {\bf curvature} of the homotopy triple $\HH=(H_s,K_s,J_s)$ is the function 
 $\kappa(\HH) \colon \R \times S^1 \times M \to \R$ defined by
$$\kappa(\HH) = \p_sH_s - \p_tK_s + \{H_s,K_s\}.$$
The positive and negative norms of the curvature are, respectively, 
$$
|||\kappa(\HH)|||^+ = \int_{\R \times S^1} \max_{p\in
M}\kappa(\HH)(s,t,p)\,\,ds \,dt,
$$
and 
$$
|||\kappa(\HH)|||^- = -\int_{\R \times S^1}  \min_{p \in
M}\kappa(\HH)(s,t,p) \,\,ds\,dt.
$$

\subsection{Floer caps} Given a homotopy triple $\HH=(H_s,K_s,J_s)$ for $(H,J)$, we consider smooth maps 
$u \colon \R \times S^1  \to M$, which satisfy the equation
\begin{equation}\label{left-section}
    \partial_s u- X_{K_s}(u)+ J_s(u)(\partial_tu - X_{H_s}(u))=0.
\end{equation}
%These maps are pseudo-holomorphic sections of the bundle ${\R \times S^1} \times M \to \R \times S^1,$
%with respect to the almost complex structure on the total space given by
%$$
% \left(
%\begin{array}{ccc}
%  0 & -1 & 0 \\
%  1 & 0 & 0 \\
%  X_{H_s}- J_s (X_{K_s}) & X_{K_s}- J_s (X_{H_s}) & J_s
%\end{array}
%\right).
%$$
The energy of a solution $u$ of \eqref{left-section} is defined as
$$
  E(u) = \int_{\R \times S^1} \om(u) \Big( \p_su -X_{K_s}(u), J_s (\p_su -X_{K_s}(u))\Big) \,ds \, dt.
$$
If this energy is finite, then 
$$ u(+\infty) \eqdef \lim_{s \to \infty} u(s,t) = x(t) \in
\PP(H)$$
and
$$
u(-\infty) \eqdef \lim_{s \to -\infty} u(s,t) = p \in M,
$$
where the convergence is in $C^{\infty}(S^1,M)$ and the point $p$ in $M$
is identified with the constant map $t \mapsto p$. This asymptotic behavior implies that if
a solution $u$ of \eqref{left-section} has finite energy,
then it determines an asymptotic spanning disc for the $1$-periodic orbit $u(+\infty)=x$. 
More precisely, for sufficiently large $s>0$, one can complete and reparameterize $u|_{[-s,s]}$ to be  
a spanning disc for $x$ in a homotopy class  which is independent of $s$. 
%Since they play the same role for us, 
%we will not distinguish between spanning discs and asymptotic spanning discs.

The set of {\bf left Floer caps} of $x \in \PP(H)$ with respect to $\HH$ is
$$\LL(x;\HH)= \left\{\ u \in C^{\infty}({\R \times S^1},M) \mid u
\text{ satisfies \eqref{left-section} },\,E(u)< \infty,\, u(+\infty)=x \right\}.
$$
For each $u \in \LL(x;\HH)$
we define the action of $x$ with
respect to $u$ by
$$
\AC_H(x,u) = \int_0^1 H(t,x(t))\,dt - \int_{\R \times S^1} u^*\om.
$$
A straight forward computation yields  
\begin{equation}\label{energy-left}
    0 \leq E(u) =   - \AC_H(x,u)+
\int_{\R \times S^1} \kappa(\HH)(s,t,u(s,t))\,ds \, dt.
\end{equation}
%and hence
%\begin{equation}\label{action-left}
%    \AC_H(x,u) \leq ||| \kappa(\HH)|||^+.
%\end{equation}
Each left Floer cap $u \in \LL(x;\HH)$ also determines a unique homotopy class of  trivializations of $x^*(T^*M)$ and
hence a Conley-Zehnder index $\cz(x,u)$.
%For a generic homotopy triple $\HH$, i.e.,  a generic choice of $J_s$, each $\LL(x;\HH)$ 
% is a smooth manifold. The dimension of the component containing $u$ is $\cz(x,u)+n$, \cite{pss}.
%Hence, for every orbit $x \in \PP(H)$ with a (regular) left Floer cap $u$ we have 
%$
%\cz(x,u) \geq -m.
%$

For any function of the form $F(s,\cdot)$, we set  $$\overleftarrow{F}(s, \cdot)= F(-s, \cdot).$$
Given a homotopy triple $\HH=(H_s,K_s,J_s)$, we will also consider maps $v \colon
{\R \times S^1} \to M$ which satisfy the equation
\begin{equation}\label{right-section}
    \partial_sv + X_{\overleftarrow{K_s}}(v)+ \overleftarrow{J_s}(v)(\partial_tv - X_{\overleftarrow{H_s}}(v))=0.
\end{equation}
In this way, we obtain for each $x \in \PP(H)$ the space of {\bf right
Floer caps},
$$\RR(x;\HH)=\left\{\ v \in C^{\infty}({\R \times S^1},M) \mid v \text{ satisfies \eqref{right-section}} ,\,E(v)< \infty,\,
v(-\infty)=x \right\}.$$ 

Every right Floer cap $v \in \RR(x;\HH)$ also determines an asymptotic spanning disc for $x$,  
$$\overleftarrow{v}(s,t) =v(-s,t).$$  
The action of $x$ with respect to $\overleftarrow{v}$ is defined as 
$$
\AC_H(x,\overleftarrow{v}) = \int_0^1 H(t,x(t))\,dt - \int_{\R \times S^1} \overleftarrow{v}^*\om,
$$
and it satisfies the inequality
\begin{equation}\label{energy-right}
    0 \leq E(v) =  \AC_H(x,\overleftarrow{v})+
\int_{\R \times S^1} \kappa(\HH)(s,t,\overleftarrow{v}(s,t))\,ds \, dt.
\end{equation}
%
%\begin{equation}\label{action-right}
%    \AC_H(x,\overleftarrow{v}) \geq -||| \kappa(\HH)|||^-.
%\end{equation}
The Conley-Zehnder index of $x$ with respect to $\overleftarrow{v}$ is denoted by  
$\cz(x,\overleftarrow{v})$.
% and so for every orbit $x \in \PP(H)$ with a (regular) right Floer cap $v$ we 
% have 
%$
%\cz(x,\overleftarrow{v}) \leq m.
%$

\subsection{Cap data and compactness}
\label{comp}

For the starting data $(H,J)$, we will choose  a pair of homotopy triples
$$
\Hh=(\HH_L,\HH_R).
$$
This will be referred to as our {\bf cap data}.
The norm of the curvature of $\Hh$ is defined to be
$$
|||\kappa(\Hh)||| = |||\kappa(\HH_R)|||^- + |||\kappa(\HH_L)|||^+.
$$
%
%Given cap data
%$
%\Hh=(\HH_L,\HH_R)
%$
%for $(H,J)$, t

We will use $\Hh$ to define three classes of perturbed holomorphic cylinders. Two of these classes are the left Floer caps with respect to $\HH_L$, $\LL(x, \HH_L)$, and the right Floer caps with respect to $\HH_R$, $\RR(x, \HH_R)$. These are used to construct the map $\Phi$. The third class of perturbed holomorphic cylinders that we consider are called  {\bf Floer spheres}. These are defined  in Section \ref{phi} where they are used to construct the desired chain homotopy between $\Phi$ and the identity map. For each of the three classes of  perturbed holomorphic cylinders there are three possible sources of noncompactness. We need to avoid two of these sources, and in this section we describe how this is accomplished.

The first source  of noncompactness to be avoided is the possibility that a sequence of curves can approach the boundary of $M$. This possibility has already  been precluded by the admissibilty conditions on $\HH_L$ and $\HH_R$. Consider the case of left Floer caps.
For $\HH_L=(H_s,K_s,J_s)$, we note that  the admissibility conditions on $H_s$ and $J_s$, and the fact that $K_s$ belongs to $C^{\infty}_0(S^1 \times M)$ for all $s \in \R$, imply that for some $\eps>0$  equation \eqref{left-section} restricts to  $S^1 \times M_{\eps}$
as 
\begin{equation}\label{maximum}
    \partial_s u + J_s(u)(\partial_tu + a(s)R(u))=0.
\end{equation}
Here, each  $J_s$ satisfies conditions {\bf(J1)}-{\bf(J3)}. The function $a(s)$ is determined by $H_s$, as in \eqref{hs}, and so we have $a(s) \leq 0$ and $\frac{da}{ds} \leq 0$.
If $T \colon M_{\eps} \to \R$ is the function $T(\tau,x) = e^{\tau}$ and $u$ is a 
solution of \eqref{maximum}, then a straight forward computation yields 
\begin{equation*}
\label{ }
\triangle(T \circ u) = \om(\p_s u, J(u) \p_s u) - \frac{d a}{d s} (T \circ u),
\end{equation*}
(see, \cite{vi2} or Theorem 2.1 of \cite{fs}).
Since the right hand side is nonnegative, the Strong Maximum Principle implies that if  $T\circ u$ attains its maximum then it is constant, \cite{gt}. Hence, no left Floer cap $u \in \LL(x;\HH_L)$ enters 
$M_{\eps}$ and no sequence of left Floer caps can approach $\p M$.  The arguments for right Floer caps and the Floer spheres defined in Section \ref{phi}, are entirely similar and are left to the reader.

The other source of noncompactness that we wish to avoid is bubbling. 
To achieve this we will exploit the following fact; the energy of the Floer caps and Floer spheres
that we consider is bounded above by $||| \kappa (\Hh) |||$. Using the assumption from Theorem \ref{cap} that $\|H\| < \widehat{\hbar}$ we will construct
cap data $\Hh$ for which we have the curvature bound 
\begin{equation}
\label{curvature bound}
||| \kappa (\Hh) ||| \leq \|H\| < \widehat{\hbar}.
\end{equation}
As we now describe, this condition allows us to avoid bubbling  by simply restricting our choices of the almost complex structures which appear as part of the cap data $\Hh$. 

The following result is a simple consequence of Gromov's compactness theorem for 
holomorphic curves.
\begin{Lemma}(\cite{ke1})
\label{open}
For every $\delta>0$ there is a nonempty  open subset $\widehat{\JJ}^{\delta} \subset \widehat{\JJ}(M,\om)$ such that 
for every $J \in \widehat{\JJ}^{\delta}$ we have $\hbar(J) \geq \widehat{\hbar} - \delta$.
\end{Lemma}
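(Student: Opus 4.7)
The plan is to deduce the result from Gromov's compactness theorem for pseudoholomorphic spheres, by establishing a form of lower semicontinuity for the function $J \mapsto \hbar(J)$ at a well-chosen basepoint $J_0 \in \widehat{\JJ}$.

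First I would exploit the definition of $\widehat{\hbar}$ as a supremum to pick $J_0 \in \widehat{\JJ}$ with $\hbar(J_0) > \widehat{\hbar} - \delta/2$ (if $\widehat{\hbar} = +\infty$, pick $J_0$ with $\hbar(J_0) > K$ for any desired finite $K$; the argument is the same). The set $\widehat{\JJ}^{\delta}$ will then be taken to be a $C^{\infty}$-open neighborhood of $J_0$ in $\widehat{\JJ}$, equipped with the subspace topology inherited from $\JJ$; in particular it is nonempty by construction. Suppose, toward a contradiction, that no such neighborhood exists. Then there is a sequence $J_n \to J_0$ in $\widehat{\JJ}$ and nonconstant $J_n$-holomorphic spheres $u_n \colon S^2 \to M$ with
\begin{equation*}
\int_{S^2} u_n^* \om \; < \; \widehat{\hbar} - \delta.
\end{equation*}

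Since $(M,\om)$ is compact (Theorem \ref{cap} is being proved in the closed and compact convex cases only) and the $\om$-areas of the $u_n$ are uniformly bounded, Gromov's compactness theorem produces a subsequence converging, modulo reparameterization, to a cusp curve: a connected tree of $J_0$-holomorphic spheres whose total $\om$-area equals $\lim_n \int_{S^2} u_n^*\om \leq \widehat{\hbar} - \delta$. The rationality hypothesis on $(M,\om)$ supplies the uniform lower bound $\int_{S^2} u_n^*\om \geq r(M,\om) > 0$, so areas cannot collapse in the limit and at least one component of the cusp curve must be a nonconstant $J_0$-holomorphic sphere. Its $\om$-area is bounded above by the total, hence by $\widehat{\hbar} - \delta$, and this forces $\hbar(J_0) \leq \widehat{\hbar} - \delta$, contradicting the choice of $J_0$.

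The main technical point is just the applicability of Gromov compactness along $J_n \to J_0$ within $\widehat{\JJ}$: since $M$ is compact, images of pseudoholomorphic spheres are automatically contained in a compact set, and the admissibility conditions (J1)--(J3) are closed under $C^{\infty}$ convergence, so the limit $J_0$ is itself admissible and no boundary or escape-to-infinity issue obstructs the bubbling analysis. Once this is in place, the proof is essentially a one-line lower semicontinuity argument, which is why the statement is attributed to \cite{ke1}.
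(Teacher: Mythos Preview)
Your argument is correct and is exactly the ``simple consequence of Gromov's compactness theorem'' that the paper alludes to; the paper itself gives no proof beyond that phrase and the citation to \cite{ke1}.

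One small point: you invoke the rationality of $(M,\om)$ to rule out the possibility that the areas $\int_{S^2} u_n^*\om$ collapse to zero. However, Lemma~\ref{open} is used in the proof of Theorem~\ref{cap}, which is stated without any rationality hypothesis (rationality enters only in the main Theorem~\ref{thm}). The fix is standard and costs nothing: for $J_n \to J_0$ in $C^\infty$, the constants in the mean value inequality are uniform, so there is an $\epsilon_0 > 0$ such that every nonconstant $J_n$-holomorphic sphere has $\om$-area at least $\epsilon_0$. This already prevents energy collapse, and the rest of your Gromov-compactness argument goes through verbatim. With that adjustment the proof is complete and matches the intended one.
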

%Let $\JJ^{\delta}$ be a set as in Lemma \ref{open}. Since $\JJ^{\delta}$ is open in 
%the $C^{\infty}$-topology, it follows from 
%\cite{fhs} that one can achieve transversality for the moduli spaces of Floer caps 
%using only families  $J_s$ which take values in $\JJ^{\delta}$. In other words, these moduli spaces
%can be assumed to be smooth manifolds such that the dimensions of their components agree
%with their virtual dimensions.
Since $\|H\|< \widehat{\hbar}$, we can set
\begin{equation*}
\label{ }
\delta_H = \frac{\widehat{\hbar} -\|H\|}{2} 
\end{equation*} 
and let  $\widehat{\JJ}^{\delta_H}$ be an open set in $\widehat{\JJ}$ as described in Lemma \ref{open}. We will assume from now on that the almost complex structures which appear in the families in $\Hh$ all belong to  $\widehat{\JJ}^{\delta_H}$. This implies that any  that any bubble which forms from a sequence of Floer spheres,  must be a $J$-holomorphic sphere $w \colon S^2 \to M$ for some $J \in \widehat{\JJ}^{\delta_H}$. Inequality \eqref{curvature bound} implies that for this $J$ we have
$$
\hbar(J) \geq \widehat{\hbar} - \12 \left( \widehat{\hbar} -\|H\| \right) > ||| \kappa(\Hh)|||.
$$
As mentioned above,  the energy of our Floer spheres is bounded above by $|||\kappa(\Hh)|||$. 
This implies that the energy of the bubble $w$ is less than $|||\kappa(\Hh)|||$ and hence $\hbar(J)$. By the definitnion of $\hbar(J)$,  the bubble $w$ must therefore be trivial. Thus, if $\Hh$ satisfies \eqref{curvature bound} and if the families of almost complex structures appearing $\Hh$ take values in $\widehat{\JJ}^{\delta_H}$,  then no bubbling occurs for sequences 
of Floer caps or Floer spheres that are defined using $\Hh$.
 
We also note, that since  $\widehat{\JJ}^{\delta_H}$ is open we can achieve transversality for our spaces
of Floer caps and Floer spheres , which can therefore be assumed to be manifolds of the expected dimensions, \cite{fhs}.

%\section{Proof of Theorem \ref{cap}} \label{floer}

\subsection{Specific cap data}

We now specify cap data $\Hh = (\HH_L, \HH_R)$ for $(H,J)$,  where $J$ takes values in
$\widehat{\JJ}^{\delta_H}$ and $H$ is a Hamiltonain as in Theorem \ref{cap}, i.e.,  $\|H\|< \widehat{\hbar}$ and $\phi^t_H$ does not minimize the negative Hofer length in its homotopy class.  This cap data will satisfy  the  curvature bound \eqref{curvature bound}. It will be assumed throughout, that the  families of almost complex structures appearing in $\Hh$ take values in $\widehat{\JJ}^{\delta_H}$.

Let $b \colon \R \to [0,1]$ be a smooth nondecreasing function 
such that $b(s) =0 $ for $s \leq -1$ and $b(s)=1$ for $s \geq 1$. Let $\HH_L$ be a \emph{linear} homotopy of the 
form
$$
\HH_L = (b(s)H, 0, J_{\scriptscriptstyle{L,s}}).
$$
For this choice, $\kappa(\HH_L) = \beta'(s)H$ and we have 
\begin{equation}
\label{k+}
||| \kappa(\HH_L)|||^+= \|H\|^+.
\end{equation}

Fix an admissible Hamiltonian $G$
such that $\phi^t_G$ is homotopic to $\phi^t_H$, relative its endpoints, and $\|G\|^- < \|H\|^-.$ 
Note that $H$ and $G$ have the same slope, $a_0$, because they generate the same time one map and are admissible.
 
We  now use the Hamiltonian $G$ to construct $\HH_R$.
We start with a linear homotopy triple for $G$ of the form 
$$
\GG = (b(s) G, 0,  J_s).
$$
Let $F_s$ be the normalized Hamiltonian which generates the Hamiltonian flow $\phi^t_{H} \circ (\phi^t_{b(s)H})^{-1} \circ \phi^t_{b(s)G}$. A straightforward computation shows that the slope of $F_s$ does not depend on $s$.  
Set
$$
\widetilde{H}_s = 
\left\{
  \begin{array}{lll}
    0 & \hbox{for $s \leq -1$;}\\  
    b(s)G & \hbox{for $-1 \leq s\leq 1 $;} \\
    F_{s-2}, & \hbox{for $1 \leq s \leq 3 $;}\\
    H, & \hbox{for $s \geq 3 $.}
   \end{array}
\right.
$$
This is an admissible compact homotopy from the zero function to $H$.
In particular, we have 
$$ \frac{da}{ds}(s) = \dot{b}(s) a_0 \leq 0,$$ where again $a_0$ is the common slope of $G$ and $H$.

Let $\varrho_t = \phi^t_{H} \circ (\phi^t_{G})^{-1}$.
The map $\Upsilon \colon C^{\infty}(S^1,M) \to C^{\infty}(S^1,M)$ defined by
\begin{equation}
\label{ups}
\Upsilon(x(t))=  \varrho_t ( x(t)).
\end{equation}
takes contractible loops to contractible loops and hence $\Upsilon (\PP(G))= \PP(H).$
Now consider the family of contractible Hamiltonian loops 
$$\varrho_{s,t}= \phi^t_{\widetilde{H}_s} \circ (\phi^t_{b(s)G})^{-1}.$$ 
For each value of $s$, $\varrho_{s,t}$ is a loop based at the identity,
and 
$$
\varrho_{s,t}= 
\left\{
  \begin{array}{ll}
    \id, & \hbox{for $s \leq 1$;} \\
    \phi^t_{F_{s-2}} \circ (\phi^t_{G})^{-1}, & \hbox{for $1 \leq s \leq 3$;} \\
   \phi^t_{H} \circ (\phi^t_{G})^{-1} , & \hbox{for $s \geq 3$.}
   \end{array}
\right.
$$
From $\varrho_{s,t}$ we obtain the family of normalized Hamiltonians, $A_s \in C^{\infty}_0(S^1 \times M)$,  defined by  
$$ \p_s (\varrho_{s,t}(p)) = X_{A_s}(\varrho_{s,t}(p)).$$
%and
%$$ \p_t (\varrho_{s,t}(p)) = X_{B_s}(\varrho_{s,t}(p)).$$
%The standard composition formula for Hamiltonian flows implies that
%$${B_s}= \widetilde{H}_s -G_s \circ \varrho^{-1}_{s,t},$$
%where $(G_s \circ \varrho^{-1}_{s,t}) (t,p) = G_s(t,\varrho^{-1}_{s,t}(p))$. From \cite{ba}, we also 
%have the following useful relation 
%\begin{equation}
%\label{banyaga}
%\p_sB_s -\p_tA_s + \{B_s,A_s\} =0.
%\end{equation}
Set
$$
\HH_R = (\widetilde{H}_s,\widetilde{ K}_s, \widetilde{J}_{\scriptscriptstyle{R,s}} )
=(\widetilde{H}_s,  A_s , d  \varrho_{s,t} \circ J_s \circ d (\varrho_{s,t}^{-1})).$$
It is easy to verify that $\HH_R$ is a homotopy triple for $(H,J)$
where $J = \varrho_{t} \circ J^+ \circ d (\varrho_{t}^{-1}) $ and $J^+ =\lim_{s \to \infty } J_s$. It also follows from Proposition 2.7 of \cite{ke1} that 
\begin{equation}
\label{curve}
|||\kappa(\HH_R)|||^{\pm} = |||\kappa(\GG)|||^{\pm}.
\end{equation}

The following result is proved as Propositions 2.6 in \cite{ke1}.
\begin{Proposition}
\label{push}
The map $\widetilde{\Upsilon}$ defined on $\RR(x; \GG)$ by
$\widetilde{\Upsilon} (v(s,t))  = \varrho_{-s,t} (v(s,t)),$
is a bijection onto $\RR(  \Upsilon(x);\HH_R)$ for every $x \in \PP(G)$. and $$\cz(x,\overleftarrow{v}) = \cz(\Upsilon(x),\overleftarrow{\widetilde{\Upsilon}(v)})$$ and $$\AC_G(x,\overleftarrow{v}) = \AC_H(\Upsilon(x), \overleftarrow{\widetilde{\Upsilon}(v)}).$$
\end{Proposition}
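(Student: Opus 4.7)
The plan is to prove Proposition \ref{push} by a direct gauge-transformation calculation. The map $\widetilde{\Upsilon}$ is the pointwise application of the $s$-family of symplectomorphisms $\varrho_{-s,t}$, and all the relevant objects (the right Floer cap equation, the action, the Conley-Zehnder index) transform in a controlled way under such a conjugation.

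First I would differentiate $\widetilde{v}(s,t)=\varrho_{-s,t}(v(s,t))$ in $s$ and $t$ via the chain rule. The $s$-derivative of $\varrho_{-s,t}$ at a fixed point is $-X_{A_{-s}}$ evaluated at $\varrho_{-s,t}(p)$ by the definition of $A_s$, and the $t$-derivative is $X_{B(-s,t)}$ evaluated at $\varrho_{-s,t}(p)$, where the Hamiltonian $B(s,t,\cdot)=\widetilde{H}_s(t,\cdot)-b(s)G(t,\varrho_{s,t}^{-1}(\cdot))$ is the standard generator of $\phi^t_{\widetilde{H}_s}\circ(\phi^t_{b(s)G})^{-1}$ in the $t$-direction. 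Substituting these into the right Floer cap equation for $v$ with triple $\GG=(b(s)G,0,J_s)$, and using the identity that conjugation of a Hamiltonian vector field by a symplectomorphism yields the Hamiltonian vector field of the pulled-back function, one checks that $\widetilde{v}$ satisfies the right Floer cap equation for $\HH_R=(\widetilde{H}_s,A_s,\widetilde{J}_{R,s})$ precisely because $\widetilde{J}_{R,s}=d\varrho_{s,t}\circ J_s\circ d\varrho_{s,t}^{-1}$ is tailored to absorb the conjugation and the Hamiltonian terms combine via the $B$ formula.

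For the asymptotic and bijectivity statements, as $s\to-\infty$ one has $\varrho_{-s,t}\to\varrho_t$ and $v(s,\cdot)\to x$, so $\widetilde{v}(-\infty,t)=\varrho_t(x(t))=\Upsilon(x)(t)$, while as $s\to+\infty$ we have $\varrho_{-s,t}\to\id$ and $v(+\infty)$ a constant, so $\widetilde{v}(+\infty)$ is the same constant. Energy is preserved since $\widetilde{J}_{R,s}$ and $A_s$ were chosen so that the Floer $L^2$-energy density of $\widetilde{v}$ pulls back pointwise to that of $v$. The inverse map $\widetilde{v}\mapsto\varrho_{-s,t}^{-1}(\widetilde{v})$ is defined by the same recipe and, by the analogous calculation run backwards, lands in $\RR(x;\GG)$, so $\widetilde{\Upsilon}$ is a bijection.

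For the index and action identities, the key observation is that $\overleftarrow{\widetilde{v}}(s,t)=\varrho_{s,t}(\overleftarrow{v}(s,t))$. Composing the $\overleftarrow{v}$-induced trivialization of $x^*TM$ with the linear isomorphism $d\varrho_{s,t}$ produces a trivialization of $\Upsilon(x)^*TM$ which, at $s=+\infty$, agrees with the one induced by $\overleftarrow{\widetilde{v}}$; since $\varrho_{s,t}$ is an isotopy starting at the identity, no nontrivial loop is inserted, and the homotopy invariance of $\cz$ together with the conjugacy of linearized flows yields $\cz(x,\overleftarrow{v})=\cz(\Upsilon(x),\overleftarrow{\widetilde{v}})$. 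For the action identity, I would compute $\AC_H(\Upsilon(x),\overleftarrow{\widetilde{v}})-\AC_G(x,\overleftarrow{v})$ by viewing the three-parameter family $(s,\sigma,t)\mapsto\varrho_{\sigma s,t}(\overleftarrow{v}(s,t))$ as a homotopy between the two spanning discs, applying Stokes' theorem to express the change in $\int\overleftarrow{\widetilde{v}}^{\,*}\om-\int\overleftarrow{v}^{\,*}\om$ as a boundary area, and matching that against the difference $\int_0^1(H(t,\Upsilon(x)(t))-G(t,x(t)))\,dt$ using the transformation rule for Hamiltonians under conjugation by $\varrho_t$. The main obstacle will be the bookkeeping in this last step: keeping the $s\leftrightarrow-s$ reversal straight, tracking signs in the Hamiltonian-composition formula, and verifying that the area swept by the interpolating isotopy cancels precisely against the Hamiltonian difference so that the total change is zero.
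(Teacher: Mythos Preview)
Your approach is correct and is precisely the standard gauge-transformation (naturality) argument for such statements. The paper itself does not supply a proof here; it simply cites Proposition~2.6 of \cite{ke1}, where exactly this computation is carried out. Your outline matches that argument: the triple $\HH_R=(\widetilde{H}_s,A_s,d\varrho_{s,t}\circ J_s\circ d\varrho_{s,t}^{-1})$ is engineered so that conjugation by $\varrho_{-s,t}$ carries solutions of the right-cap equation for $\GG$ to solutions of the right-cap equation for $\HH_R$, the asymptotics transform as you describe, and the index and action identities follow from the contractibility of the loop $\varrho_t$ (witnessed by the homotopy $\varrho_{s,t}$) together with the normalization of the Hamiltonians involved. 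One small simplification for the action identity: rather than running Stokes' theorem on the three-parameter family directly, you can invoke the standard fact that a contractible Hamiltonian loop generated by a normalized Hamiltonian acts on capped loops with zero action shift; this absorbs all of the bookkeeping you flag at the end.
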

The curvature identity \eqref{curve} yields
\begin{eqnarray*}
|||\kappa(\HH_R)|||^- &=& |||\kappa(\GG)|||^-\\
{} &=& -\int_{\R \times S^1} \dot{b}(s)(\min_{p \in M} G(t,p)) \,ds \,dt  \\
{} &=& \| G \|^-.
\end{eqnarray*}
Hence, by \eqref{k+} we have 
\begin{eqnarray*}
|||\kappa(\Hh)||| & = & |||\kappa(\HH_L)|||^+ + |||\kappa(\HH_R)|||^-\\
{} & = & \| H\|^+ + \|G\|^- \\
{} & < & \| H \| \\
{} & < & \widehat{\hbar}.
\end{eqnarray*}
%As described in  \S \ref{comp}, we  choose the families of almost complex structures which appear in $\Hh$ so that they take values in $\widehat{\JJ}^{\delta_H}$.

\subsection{The map $\Phi$} 
\label{phi} 

Let $\Hh =(\HH_L, \HH_R)$ be the cap data for $(H, J)$ constructed in the 
previous section.
%A left or right Floer cap is called {\bf short} if its energy is less than $\widehat{\hbar}$.
%We denote the subset of short elements in $\LL(x;\HH_L)$ and  $\RR(x;\HH_R)$  by $\LL'(x;\HH_L)$ and $\RR'(x;\HH_R)$, respectively.
For each critical point $p$ of $f$ we consider the following two spaces of half-trajectories of the negative gradient equation \eqref{grad};
$$
\ell(p) =\left\{ \alpha \colon (-\infty,0] \to M \mid \dot{\alpha} = - \nabla_h f(\alpha),\, \alpha(-\infty)=p \right\}
$$
and 
$$
r(p) =\left\{ \beta \colon [0, +\infty) \to M \mid \dot{\beta} = - \nabla_h f(\beta),\, \beta(+\infty)=p \right\}.
$$
Let $\NN(p,x,q;f,\Hh)$ be the set of tuples $$(\alpha, u,v,\beta) \in \ell(p)\times \LL(x;\HH_L) \times \RR(x;\HH_R) \times r(q)$$ such that 
\begin{equation*}
\label{ }
 \alpha(0) = u(-\infty),
\end{equation*}
\begin{equation*}
\label{ }
v(+\infty) = \beta(0)
\end{equation*}
and
\begin{equation*}
\label{ }
[u \#v ] =0.
\end{equation*}
It follows from \cite{pss} that for generic choices of the families of almost complex structures appearing in $\Hh$, the 
dimension of $\NN(p,x,q;f,\Hh)$ is $\mor(p) -\mor(q)$.

The map $\Phi \colon \CM_*(f) \to \CM_*(f)$,  is then defined by setting the coefficient 
of $q$ in the image of $p$ to be the number of elements, modulo two, of the set
$$
\bigcup_{x \in \PP(H)} \NN(p,x,q;f, \Hh).
$$ 
To show that $\Phi$ is well-defined, we must verify that the zero-dimensional spaces $\NN(p,x,q;f, \Hh)$ are compact. The only
possible source of noncompactness are bubbles which appear on the Floer caps of the configurations in $\NN(p,x,q;f, \Hh).$ To avoid this, we only need to show that for each tuple $(\alpha,u,v,\beta)$ in $\NN(p,x,r;f,\Hh)$, both $u$ and $v$ have energy less than or equal to $|||\kappa(\Hh)|||$. Then, as described in Section \ref{comp}, our choice of almost complex structures in $\Hh$ precludes such bubbling. 

Equations \eqref{energy-left} and \eqref{energy-right} imply that 
\begin{equation*}
\label{left}
0 \leq E(u) \leq -\AC_H(x,u) + |||\kappa(\HH_L)|||^+
\end{equation*}
and 
\begin{equation*}
\label{right}
0 \leq E(v) \leq \AC_H(x,\overleftarrow{v}) - |||\kappa(\HH_R)|||^-.
\end{equation*}
Since $[u\#v]=0$, we have $\AC_H(x,u) =\AC_H(x,\overleftarrow{v})$ and 
\begin{eqnarray*}
E(u) & \leq  & -\AC_H(x,\overleftarrow{v}) + |||\kappa(\HH_L)|||^+ \\
{} & \leq  & |||\kappa(\HH_R)|||^- + |||\kappa(\HH_L)|||^+\\
{} & = & |||\kappa(\Hh)|||.
\end{eqnarray*}
A similar computation implies that  $E(v) \leq |||\kappa(\Hh)|||$, and so $\Phi$ is well-defined. 

\begin{Proposition}\label{prop:ident}
The map $\Phi \colon \CM(f) \to \CM(f)$ is chain homotopic to the identity.
\end{Proposition}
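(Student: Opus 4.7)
The plan is to construct a chain homotopy $K \colon \CM_*(f) \to \CM_{*+1}(f)$ via a $1$-parameter deformation of the cap data that interpolates between $\Hh$ (producing $\Phi$) and a trivial cap data (producing the identity). This is the standard PSS-type argument, where the interpolation parameter $R \in [0,1]$ controls the ``size'' of the Hamiltonian perturbation in the Floer equations.

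First I would choose a smooth cutoff $\chi \colon [0,1] \to [0,1]$ with $\chi(0)=0$ and $\chi(1)=1$, and define a family of cap data $\Hh^R$ by replacing each Hamiltonian component of $\HH_L$ and $\HH_R$ by its product with $\chi(R)$ (and interpolating the Hamiltonian $K_s$-parts suitably). Because the curvature is linear in the Hamiltonian, one has $|||\kappa(\Hh^R)||| \le \chi(R)\,|||\kappa(\Hh)||| < \widehat{\hbar}$ for all $R$, so keeping the families of almost complex structures in $\widehat{\JJ}^{\delta_H}$ rules out bubbling along the whole family exactly as in \S\ref{comp}. At $R=0$ the Floer cap equations \eqref{left-section} and \eqref{right-section} reduce to the $\overline{\partial}_{J_s}$-equation on $\R \times S^1$, so a glued pair $(u,v)$ with $u(+\infty)=v(-\infty)$ and $[u\#v]=0$ is a $J$-holomorphic sphere of energy $\le |||\kappa(\Hh^0)|||=0$, hence a constant.

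For each pair of critical points $(p,q)$ with $\mor(p)-\mor(q)=-1$, I would define the parameterized moduli space
\begin{equation*}
\NN^{[0,1]}(p,q;f,\Hh^R) = \bigcup_{R\in[0,1]} \bigcup_{x\in \PP(\chi(R)H)} \NN(p,x,q;f,\Hh^R),
\end{equation*}
whose expected dimension is $0$, and set $K(p) = \sum_q \#_2\,\NN^{[0,1]}(p,q;f,\Hh^R)\cdot q$. To prove $K\partial_h + \partial_h K = \Phi - \id$, I would analyze the boundary of the analogous $1$-dimensional moduli space (with $\mor(p)=\mor(q)$). Standard cobordism analysis identifies four boundary types: (a) $R=1$ strata, where Floer spheres break along an orbit $x\in\PP(H)$ into a left cap and a right cap, contributing $\Phi$; (b) $R=0$ strata, where the Floer sphere becomes constant, forcing $\alpha(0)=\beta(0)$ and thus giving a single unbroken Morse trajectory from $p$ to $q$, contributing $\id$; (c) $\alpha$-breaking in the Morse half-trajectory, contributing $\partial_h K$; (d) $\beta$-breaking in the Morse half-trajectory, contributing $K\partial_h$. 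Summing the boundary to zero gives the desired chain homotopy identity.

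The main obstacles are analytic rather than conceptual. The most delicate point is the compactness/gluing at $R=1$: one must show that the only degenerations of sequences of Floer spheres with $R\to 1$ are breakings along some $x\in\PP(H)$ into a cap-pair $(u,v)$, and that every such cap-pair is indeed realized as such a limit. Bubbling is excluded by the curvature bound and the choice of almost complex structures in $\widehat{\JJ}^{\delta_H}$, and escape to $\partial M$ is excluded by admissibility and the maximum-principle argument of \S\ref{comp}; the remaining content is the standard PSS gluing theorem, which I would invoke following \cite{pss} and the presentation of \cite{ke1}. Achieving transversality throughout the $1$-parameter family (including at $R=0$, where one works with honest $J$-holomorphic cylinders of zero energy) is routine given the openness of $\widehat{\JJ}^{\delta_H}$ and the parameterized Sard–Smale theorem of \cite{fhs}.
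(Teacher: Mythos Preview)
Your proposal has a genuine gap. You define the parametrized moduli space as a union, over $R\in[0,1]$, of \emph{broken} cap-pair configurations indexed by orbits $x\in\PP(\chi(R)H)$. But for generic $R\in(0,1)$ there is no reason for $\chi(R)H$ to be a Floer Hamiltonian: its $1$-periodic orbits may be degenerate, and as $R$ varies they undergo birth--death bifurcations. Hence the spaces $\NN(p,x,q;f,\Hh^R)$ are not defined for all $R$, and even where they are they do not assemble into a smooth $1$-cobordism with the boundary you describe. Your own boundary analysis betrays the confusion: you speak of ``Floer spheres breaking'' at $R=1$, but in your setup the configurations are already broken at every $R$, and there is no degeneration mechanism at the finite endpoint $R=1$ that would produce anything new.

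The paper avoids this entirely by working with a \emph{single} perturbed holomorphic equation (the Floer spheres $w$ of equation~\eqref{middle}), so that no intermediate periodic orbit is ever singled out. The deformation parameter $\lambda$ ranges over the noncompact interval $[0,\infty)$: for $\lambda\in[0,1]$ it scales the glued Hamiltonian data down to zero (this is the piece you isolated), while for $\lambda\ge 1$ it stretches the neck separating the $\HH_L$-region from the $\HH_R$-region. The $\Phi$ boundary component arises only as $\lambda\to\infty$, where Floer compactness and gluing force the sphere to split along some $x\in\PP(H)$ into a genuine left/right cap pair; the identity component arises at $\lambda=0$, where the equation is unperturbed and the homology constraint $[w]=0$ forces the sphere to be constant. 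The key idea you are missing is this neck-stretching: without letting the gluing length tend to infinity there is simply no mechanism by which the single-equation moduli space produces the broken configurations that define~$\Phi$.
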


\begin{proof}
%The proof of this result follows closely the proof of Proposition 2.4 of \cite{ke1}. 
%The only additional feature in the present setting is the presence of  the admissibility
%assumptions which allow us to preclude the possibility that the relevant perturbed holomorphic curves
%approach the boundary of $M$. 
%We present the full  proof for the sake of completeness, and so the
%admissibility conditions can be verified.

The specific cap data, $\Hh=(\HH_L,\HH_R)$, is used to construct a \emph{ homotopy of homotopy triples} $\HH^{\lambda} = (H_s^{\lambda}, K_s^{\lambda}, J_s^{\lambda})$ for $\lambda \in [0,+\infty)$. The desired chain homotopy is then defined using maps
$w$ in $\C^{\infty}({\R \times S^1},M)$ which satisfy the equation
\begin{equation}
\label{middle}
\p_s w- X_{K^{\lambda}_s}(w)+ J^{\lambda}_s(w)(\p_tw - X_{H^{\lambda}_s}(w))=0.
\end{equation}

To define $\HH^{\lambda}$ we introduce the notation $\HH = \HH(s)=(H_s,K_s,J_s)$ to emphasize the $s$-dependence of $\HH$. 
Set
 $$
\HH^{1}(s) =  \begin{cases}
   \HH_{L}(s+3)& \text{ when $ s \leq 0$}, \\
   \HH_{R}(s-3)& \text{ when $ s \geq 0$}.
   \end{cases}
$$
For $\lambda \in [1,+\infty)$, we then define 
$$
\HH^{\lambda}(s) =  \begin{cases}
   \HH_{L}(s+C(\lambda))& \text{ when $ s \leq 0$}, \\
   \HH_{R}(s-C(\lambda))& \text{ when $ s \geq 0$},
   \end{cases}
$$
where $C(\lambda)$ is a smooth nondecreasing function which equals $\lambda$ for $\lambda \gg 1$ and is 
equal to $3$ for $\lambda$ near $1$.  Finally, for $\lambda \in [0,1]$ we set 
$$\HH^{\lambda}(s)= (D(\lambda)H^1_s, D(\lambda)K^1_s, J^{\lambda}_s)$$ 
for a smooth nondecreasing function $D \colon [0,1] \to [0,1]$ which equals zero near 
$\lambda = 0$ and equals one near  $\lambda=1$. The compact homotopies 
$J^{\lambda}_s$ are chosen so that they equal $J^0_s$  for $\lambda$ near zero
and equal $J^1_s$ for $\lambda$ near one. 

The fact that  $J_s$ takes values in $\widehat{\JJ}^{\delta_H}$ implies the same for the families 
$J^{\lambda}_s$ with $\lambda \geq 1$. For $\lambda \in [0,1)$, we choose the families $J^{\lambda}_s$ so that they also take values in $\widehat{\JJ}^{\delta_H}$.

The following properties of $\HH^{\lambda}$ are easily verified:  
\begin{enumerate}
  \item For each $\lambda\in [0,+\infty)$, $H_s^{\lambda}$ is an admissible compact homotopy from the zero function to itself.
  \item For each $\lambda\in [0,+\infty)$, $K_s^{\lambda}$ is a compact homotopy in $C^{\infty}_0(S^1 \times M)$
  from the zero function to itself.
  \item There is an $\eps>0$ such that  equation \eqref{middle} restricts to  $S^1 \times M_{\eps}$
as 
\begin{equation}\label{max}
    \partial_s w + J^{\lambda}_s(w)(\partial_t w + a(\lambda,s)R(w))=0,
\end{equation}
where $a(\lambda,s)$ is determined by $H^{\lambda}_s|_{M_{\eps}}$ and satisfies $a(\lambda,s) \leq 0$ and $\frac{\p a}{\p s} \leq 0.$
\item $|||\kappa(\HH^{\lambda})|||^+  = D(\lambda)|||\kappa(\Hh)|||$ for all $\lambda \in [0,+\infty)$.
\end{enumerate}

Solutions of \eqref{middle} are perturbed holomorphic cylinders which are asymptotic, at both ends, to points in $M$.
In particular, since the perturbations are compact, each $w$ can be uniquely completed to a perturbed holomorphic sphere. We define the space of {\bf Floer spheres} for $\Hh$ as
$$\LL\RR(\Hh)=\left\{ (\lambda,w)  \in [0,+\infty) \times \C^{\infty}({\R \times S^1},M) \mid w \text{  satisfies  } \eqref{middle},\, [w]=0 \in \pi_2(M) \right\}.$$
Invoking again the Strong Maximum Principle, the third property of $\HH^{\lambda}$, listed above, implies that the distance between the images of the maps $w$  in $\LL\RR(\Hh)$ and the boundary of $M$ is bounded away from zero buy a positive constant which is independent of $\lambda$. 

For every $(\lambda,w) \in \LL\RR(\Hh)$, we also have the uniform energy bound
\begin{eqnarray*}
E(\lambda,w) & = &  \int_{\R \times S^1} \om \left( \p_s w -X_{K^{\lambda}_s}(w), J^{\lambda}_s (\p_sw -X_{K^{\lambda}_s}(w))\right) \,ds \, dt\\
& = &  \int_{\R \times S^1} \Big( \om(X_{H^{\lambda}_s}(w), \p_sw)-\om(X_{K^{\lambda}_s}(w), \p_tw)+\om(X_{K^{\lambda}_s}(w),X_{H^{\lambda}_s}(w)) \Big) \,ds \, dt\\
 & =&  \int_{\R \times S^1} \kappa(\HH^{\lambda})(s,t,w) ds \, dt \\
 &\leq& |||\kappa(\HH^{\lambda})|||^+\\
 &=& D(\lambda) ||| \kappa(\Hh) |||\\
 &<& \widehat{\hbar}. 
 \end{eqnarray*}
As described in Section \ref{comp}, this allows us to rule out the possibility of bubbling for sequences in $\LL\RR(\Hh)$.  

For a pair of critical points $p$ and $q$ of $f$, we define $\NN_{\lambda}(p,q)$ by
$$
\Big\{ \big(\alpha,(\lambda,w), \beta \big) \in \ell(p)  \times \LL\RR(\Hh) \times r(q)\mid
\alpha(0)= w(-\infty),\, w(+\infty)=\beta(0)\Big\}
$$ 
For generic data, $\NN_{\lambda}(p,q)$
is a manifold of dimension $\mor(p) - \mor(q)+1$, \cite{pss}.
We define the map $\chi \colon \CM_*(f) \to \CM_{*+1}(f)$ by 
setting
$$
\chi(p) =\sum_{\cz(q) = \cz(p)+1} \#_2 (\NN_{\lambda}(p,q))q
$$
where $ \#_2 (\NN_{\lambda}(p,q))$ is the number of components in the compact 
zero-dimensional manifold $\NN_{\lambda}(p,q)$, modulo two.
To prove that $\chi$ is the desired chain homotopy, it suffices 
to show that for every pair of critical points  $p$ and $r$, of $f$,  such
that $\mor(p)=\mor(r)$,  the coefficient
of $r$ in 
\begin{equation}
\label{r coeff}
\big(\id - \Phi + \chi \circ \p_h + \p_h \circ \chi\big)(p) 
\end{equation} 
is zero.

Consider the compactification $\overline{\NN}_{\lambda}(p,r)$ 
of the one dimensional moduli space $\NN_{\lambda}(p,r)$. 
Since the elements of $\NN_{\lambda}(p,r)$ can not approach the boundary of $M$ and we have precluded bubbling, it follows from Floer's
gluing and compactness theorems that the boundary
of $\overline{\NN}_{\lambda}(p,r)$  can be identified with the  
union of the following zero-dimensional manifolds:
\begin{enumerate}
\renewcommand{\theenumi}{\roman{enumi}}
\renewcommand{\labelenumi}{(\theenumi)}
  \item $m(p,r)$,\\ 
  \item $\bigcup_{x \in \PP(H)} \NN(p,x,r;f,\Hh),$\\ 
  \item $\bigcup_{\cz(q) = \cz(p)-1} m(p,q)/\R \times \NN_{\lambda}(q,r),$\\
  \item $\bigcup_{\cz(q) = \cz(p)+1} \NN_{\lambda}(p,q) \times m(q,r)/\R.$
\end{enumerate}

By definition, the number of elements in the sets (ii), (iii) and (iv), modulo  two, are the  
coefficients of $r$ in $\Phi(p)$, $\chi \circ \p_h(p)$ and $\p_h \circ\chi(p)$, respectively. 
The first set $m(p,r)$ is empty if $p \neq r$, and  consists only of the 
constant map when $p=r$. So, the number of elements in $m(p,r)$, modulo two, is equal to the 
coefficient of $r$ in $\id(p)$. As the boundary of the one-dimensional manifold $\NN_{\lambda}(p,r)$, the total number of these boundary terms is zero, modulo two. Hence, the coefficient of $r$ in \eqref{r coeff} is zero, and $\chi$ is the desired chain homotopy.
\end{proof}

\subsection{Completion of the proof of Theorem \ref{cap}}
\label{finally}

For simplicity, we 
assume that the admissible Morse function $f$ on $M$ has a unique local, and hence global, minimum
at a point $P$ in $M$. Since it is the only critical point of $f$ with the smallest possible Morse index, $P$ is a cycle in the Morse complex of $f$ and is the only representative 
of  $\H_0(\CM(f), \p_h) =\Z_2.$ By Proposition \ref{prop:ident} we then have $\Phi(P)=P$ at the level of chains. Hence, for some $x \in \PP(H)$ there is a  
a tuple $(\alpha, u, v,\beta)$ in the zero-dimensional space $\NN(P,x,P;f,H)$. 
The pairs $(\alpha, u)$ and $(v, \beta)$ each belong to moduli spaces with the same asymptotic behavior.  In fact, these moduli spaces are used in the definition of the Piunikhin-Salamon-Schwarz maps from \cite{pss}, where their dimensions are computed to be
$n-\cz(x,u)$ and $\cz(x,\overleftarrow{v}) -n$, respectively. 
Since  $\NN(P,x,P;f,H)$ has dimension zero, the moduli spaces containing $(\alpha, u)$ and $(v, \beta)$ must also be zero-dimensional and so \begin{equation}
\label{i}
\cz(x,u) = \cz(x, \overleftarrow{v}) = n.
\end{equation}

The condition that $[u\#v] = 0$ implies that
$$
\AC_H(x,u) =\AC_H(x, \overleftarrow{v}).
$$
By Proposition \ref{push} we have 
\begin{equation*}
\label{ }
\AC_H(x, \overleftarrow{v}) = \AC_G(\Upsilon^{-1}(x), \overleftarrow{\widetilde{\Upsilon}^{-1}(v)})
\end{equation*}
where $\widetilde{\Upsilon}^{-1}(v)$ is a right Floer cap in
$\RR(\Upsilon^{-1}(x); \GG)$ and $\GG$ is a linear homotopy
triple of the form $(\beta(s)G,0,J_s)$. Inequality \eqref{energy-right}
then implies that 
$$
\AC_H(x, \overleftarrow{v}) \geq - |||\kappa(\GG)|||^- = -\|G\|^- > -\|H\|^-.
$$
On the other hand, inequality \eqref{energy-left} yields
$$
\AC_H(x,u) \leq |||\kappa(\HH_L)|||^+ = \|H\|^+,
$$
since $\HH_L$ is the linear homotopy triple $(\beta(s)H, 0, J_{\scriptscriptstyle{L,s}})
$ and 
$$
|||\kappa(\HH_L)|||^+ = \int_{\R \times S^1} \dot{b}(s)\left( \max_{p\in M}H(t,p) \right) \,ds \,dt = \|H\|^+.
$$
Altogether, we have 
\begin{equation}
\label{a}
\|H\|^-  < \AC_H(x, \overleftarrow{v}) = \AC_H(x,u) \leq \|H\|^+.
\end{equation}

Let $w$ be a genuine spanning disc  for $x$ which is obtained by reparameterizing the asymptotic spanning disc $u$.
By \eqref{i} and \eqref{a}, we have $\cz(x,w)=n$ and  $\|H\|^- <  \AC_H(x,w)  \leq \|H\|^+$, as required.

\section{Generalizations}
We end this note by describing some simple generalizations of Theorem \ref{thm}.
To begin with, one expects that these rigidity results should hold for more general classes of ambient symplectic manifolds. Indeed, one can immediately extend Theorem \ref{thm} to products of convex symplectic manifolds or the more general class of \emph{split convex} manifolds defined in \cite{fs}.

Theorem \ref{thm} also holds for more general classes of Lagrangian submanifolds. In particular, the methods used here only detect those closed geodesics on a Lagrangian submanifold which are
contractible in the ambient symplectic manifold. Hence, the theorem also holds for easily displaceable Lagrangian 
submanifolds of the form $L=L_1 \times L_2,$ where $L_1$ is split hyperbolic and 
$L_2$ admits a metric which has no nonconstant contractible geodesics and is incompressible
in the sense that the map $\pi_1(L_2) \to \pi_1(M),$ induced by inclusion, is an injection. 

Combining these observations, we get
%the statement of Theorem \ref{thm} remains true under the operation of  Lagrangian stabilization.  
%More precisely, the stabilization of $L \subset (M, \om)$ is defined to be 
%$\overline{L} = L \times S^1 \subset (M \times T^*S^1, \om + d \theta)$.  The reader is referred to \cite{po:book, schl}
%for some beautiful applications of stabilizations. Note that if $L$ is an easily displaceable
%Lagrangian submanifold then so is $\overline{L}$. In this case, we have 

\begin{Corollary}
Suppose that the symplectic manifold $(M,\om)$ is rational, proportional and either closed or split convex.
Let $L=L_1 \times L_2$ be an easily displaceable Lagrangian submanifold of $(M,\om)$
such that  $L_1$ is split hyperbolic and 
$L_2$  is incompressible and admits a metric which has no nonconstant contractible geodesics. Then $N_L \leq \12 \dim M+2,$ and if 
$L$ is orientable we have $N_L \leq \12 \dim M+1.$
\end{Corollary}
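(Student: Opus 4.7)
The plan is to follow the proof of Theorem \ref{thm} with two extensions. The extension from closed or convex ambient manifolds to the wider \emph{split convex} class of \cite{fs} is essentially formal, since the Strong Maximum Principle of Section \ref{comp} and the admissibility conditions on the homotopy triples of Section \ref{finally} respect the product decomposition; Theorem \ref{cap} then carries over with no essential change. The substantive extension concerns the Lagrangian: the split hyperbolic hypothesis on $L$ is replaced by $L = L_1 \times L_2$ with $L_1 = P_1 \times \dots \times P_k$ split hyperbolic and $L_2$ as in the statement.

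I would equip $L$ with the product metric $g = g_1 \oplus g_2$, where $g_1$ is the split-hyperbolic metric on $L_1$ from Example \ref{split} and $g_2$ is the hypothesized metric on $L_2$. The functional $\EE_g$ is then Morse-Bott, with critical submanifolds of the form $D_1 \times D_2$ where $D_i$ is a critical submanifold of $\EE_{g_i}$. Using this metric I construct $H_L$ as in Proposition \ref{function} and apply Theorem \ref{cap} to produce a $1$-periodic orbit $x$ of $H_L$ with spanning disc $w$ satisfying $\cz(x,w) = n$ and with action in the required range. Lemma \ref{nonconstant} forces $x$ to be nonconstant, so its projection $q = (q_1, q_2)$ to $L$ is a perturbed closed geodesic of $g$ that is contractible in $M$.

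The crucial new observation is that the $L_2$-component of $q$ can be taken to be constant. Indeed, the underlying unperturbed closed geodesic $\bar q_2$ in $L_2$ is contractible in $M$ (since $\bar q$ is, and the perturbation is arbitrarily small), hence contractible in $L_2$ by incompressibility of $L_2 \hookrightarrow M$; by the hypothesis on $g_2$ this forces $\bar q_2$ to be constant. To transfer the conclusion to the perturbed orbit, I would choose the potential $V_0$ in the construction of $H_L$ to depend only on the $L_1$-variables, so that the $L_2$-component of each perturbed orbit is itself an unperturbed closed geodesic of $g_2$; a product-compatible version of Lemma \ref{potential} then identifies $q$ with a critical submanifold $D = D_1 \times L_2$ of $\EE_g$ in which $D_1$ is a critical submanifold of nonconstant geodesics in $L_1$. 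By Example \ref{split}, $\mor(D_1) = 0$ and $\dim D_1 \leq 1 + \sum_{j=2}^k \dim P_j$, so $\mor(D) = 0$ and $\dim D \leq 1 + n - \dim P_1$.

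The index computation of Proposition \ref{in-out} and Corollary \ref{cor} then goes through verbatim and yields
\begin{equation*}
\dim P_1 - 1 \;\leq\; \masl([w]) \;\leq\; n + 1 \,(+1),
\end{equation*}
with the $(+1)$ contributing in the nonorientable case, as in Proposition \ref{index}. Since $\dim P_1 \geq 2$, the lower bound is positive, so $\masl([w])$ is a nonzero multiple of $N_L$ and the claimed inequalities $N_L \leq \tfrac{1}{2}\dim M + 1$ (orientable) and $N_L \leq \tfrac{1}{2}\dim M + 2$ (in general) follow. The main obstacle I expect is making the ``$q_2$ is constant'' step rigorous at the level of the perturbation: the incompressibility hypothesis gives a clean homotopy-theoretic conclusion for the underlying unperturbed geodesic, but translating this into a Morse-theoretic statement about the perturbed orbit requires the product-compatible perturbation scheme sketched above, which must simultaneously achieve nondegeneracy in the $L_1$-direction while preserving the Morse-Bott structure in the $L_2$-direction so that contractible-in-$M$ orbits remain associated only to critical submanifolds of the form $D_1 \times L_2$.
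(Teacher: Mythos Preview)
Your approach is the one the paper intends: the Corollary appears in Section~7 with no proof beyond the two sentences preceding it, namely that the split-convex extension is formal (via \cite{fs}) and that ``the methods used here only detect those closed geodesics on a Lagrangian submanifold which are contractible in the ambient symplectic manifold.'' Your proposal is a faithful and considerably more detailed fleshing-out of that sketch, and the index computation you give at the end is exactly right.

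There is, however, a genuine gap at the sentence ``the underlying unperturbed closed geodesic $\bar q_2$ in $L_2$ is contractible in $M$ (since $\bar q$ is\ldots).'' Contractibility of $\bar q = (\bar q_1, \bar q_2)$ in $M$ gives only $\phi_1([\bar q_1]) \cdot \phi_2([\bar q_2]) = 1$ in $\pi_1(M)$, where $\phi_i \colon \pi_1(L_i) \to \pi_1(M)$ are induced by the inclusions $L_i \hookrightarrow L \hookrightarrow M$; it does \emph{not} yield $\phi_2([\bar q_2]) = 1$ unless $\phi_1([\bar q_1])$ is also trivial. So incompressibility of $L_2$ alone does not let you conclude $[\bar q_2] = 1$ in $\pi_1(L_2)$. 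The paper's one-line sketch is silent on this point as well; a clean repair is to assume in addition that $\phi_1(\pi_1(L_1)) \cap \phi_2(\pi_1(L_2)) = \{1\}$ in $\pi_1(M)$, or more simply that $L$ itself is incompressible, after which the rest of your argument goes through verbatim.

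By contrast, the obstacle you flag at the end --- making ``$q_2$ constant'' rigorous via a product-compatible perturbation --- is not where the difficulty lies. You do not need the perturbed orbit to have literally constant $L_2$-component; you only need the nearby Morse--Bott critical submanifold $D$ to be of the form $D_1 \times L_2$. Lemma~\ref{potential} already guarantees that $q$ lies in a fixed small neighborhood of a unique $D = D_1 \times D_2$ and shares its free homotopy class in $M$, so once you know (via the corrected homotopy argument above) that the geodesics in $D_2$ are contractible in $L_2$, you get $D_2 = L_2$ and hence $\mor(D)=0$, $\dim D \leq 1 + n - \dim P_1$, with no special perturbation scheme required.
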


\end{document}